\documentclass[11pt]{amsart}
\usepackage{amsmath,amsthm,amsfonts,amssymb}
\usepackage[alphabetic]{amsrefs}
\usepackage[all]{xy}
\usepackage{amssymb,amsthm}
\usepackage{mathrsfs}
\usepackage{tikz}
\usepackage{color}
\usepackage{graphicx,pict2e}
\usepackage[
	hyperindex,
	pagebackref,
	pdftex,
	pdfdisplaydoctitle,
	pdfpagemode=UseNone,
	breaklinks=true,
	extension=pdf,
	bookmarks=false,
	plainpages=false,
	colorlinks,
	linkcolor=linkblue,
	citecolor=linkblue,
	urlcolor=linkblue,
	pdfmenubar=true,
	pdftoolbar=true,
	pdfpagelabels,
	pdfpagelayout=SinglePage,
	pdfview=Fit,
	pdfstartview=Fit
]{hyperref}
\definecolor{linkblue}{rgb}{0,0.2,0.6}
\linespread{1.1}

\newcommand{\mc}[1]{\mathcal{#1}}
\newcommand{\ov}[1]{\overline{#1}}
\newcommand{\op}[1]{\operatorname{#1}}
\newcommand{\ovop}[1]{\ov{\op{#1}}}
\newcommand{\Vqd}[1]{\mathcal{#1}}
\newcommand{\Udnga}[3]{V^{#1}_{#2,#3}}

\newcommand{\PP}{\mathbb{P}}

\newcommand{\ZZ}{\mathbb{Z}}

\newcommand{\cO}{\mathcal{O} }

\def\Mzn{\ovop{M}_{0,n} }

\newcommand{\ovmc}[1]{\ov{\mc{#1}}}

\newcommand{\Z}{\mathbb{Z}}

\newcommand{\SL}{\operatorname{SL}}
\newcommand{\sL}{\mathfrak{sl}}

\newcommand{\F}{\operatorname{F}}

\newtheorem{theorem}{Theorem}[section]
\newtheorem{corollary}[theorem]{Corollary}

\newtheorem{conjecture}[theorem]{Conjecture}
\newtheorem{lemma}[theorem]{Lemma}

\newtheorem{example}[theorem]{Example}
\newtheorem{proposition}[theorem]{Proposition}
\newtheorem{definition}[theorem]{Definition}
\theoremstyle{definition}
\newtheorem{remark}[theorem]{Remark}

\newcommand{\git}{\ensuremath{\operatorname{/\!/}}}
\newcommand{\quotientname}{Veronese quotient}
\newcommand{\quotientnames}{Veronese quotients}

\setlength{\oddsidemargin}{0in}
\setlength{\evensidemargin}{0in}
\setlength{\topmargin}{-.5in}
\setlength{\footskip}{.5in}
\setlength{\textwidth}{6.5in}
\setlength{\textheight}{9in}

\begin{document}

\pagenumbering{arabic}
\title[Gibney, Jensen, Moon, Swinarski]{Veronese quotient models of $\ovop{M}_{0,n}$ and conformal blocks}
\author{A. Gibney, D. Jensen, H-B. Moon, D. Swinarski}
\date{\today}

\begin{abstract}The moduli space $\ovop{M}_{0,n}$ of
  Deligne-Mumford stable $n$-pointed rational curves admits morphisms
  to spaces recently constructed by Giansiracusa, Jensen, and Moon that we call Veronese quotients.   We study divisors on $\ovop{M}_{0,n}$ associated to these maps and show that these divisors arise as first Chern classes of vector bundles of conformal blocks.
\end{abstract}

\maketitle

\section*{Introduction} The moduli space of Deligne-Mumford stable n-pointed rational curves $\ovop{M}_{0,n}$ is a natural compactification of
the moduli space of smooth pointed curves, and has figured prominently in the literature.     A central motivating question is to describe other compactifications of $\op{M}_{0,n}$ that receive morphisms from $\ovop{M}_{0,n}$.  From the perspective of Mori theory,  this is tantamount to describing certain semi-ample divisors on $\ovop{M}_{0,n}$.  This work is concerned with two recent constructions that each yield an abundance of such semi-ample divisors on $\ovop{M}_{0,n}$, and the relationship between them.  The first comes from Geometric Invariant Theory (GIT), while the second from conformal field theory.

There are new natural birational models of $\ovop{M}_{0,n}$ obtained via GIT which are moduli spaces of pointed rational normal curves of a fixed degree $d$, where the curves and the marked points are weighted by nonnegative rational numbers $(\gamma, A) = (\gamma,(a_1,\cdots,a_n))$ \cite{Giansiracusa, GiansiracusaSimpson,GJM}.  These so-called \emph{\quotientnames}\ $\Udnga{d}{\gamma}{A}$ are particularly remarkable as they specialize to nearly every known compactification of $\op{M}_{0,n}$ \cite{GJM}.  There are birational morphisms from $\ovop{M}_{0,n}$ to these GIT quotients, and their natural polarization can be pulled back along this morphism, yielding a seemingly endless supply of semi-ample divisors $\Vqd{D}_{\gamma,A}$  on $\ovop{M}_{0,n}$.

A second recent development in the birational geometry of $\ovop{M}_{0,n}$ involves divisors that arise from conformal field theory.  These divisors are first Chern classes of vector bundles of conformal blocks on the moduli stack  $\ovmc{M}_{g,n}$.   Constructed using the representation theory of affine Lie algebras \cite{TUY, Fakh}, these vector bundles depend on the choice of a simple Lie algebra $\mathfrak{g}$, a positive integer $\ell$,  and an $n$-tuple $\vec{\lambda} = (\lambda_{1}, \cdots, \lambda_{n})$ of dominant integral weights in the Weyl alcove for $\mathfrak{g}$ of level $\ell$.     
Vector bundles of conformal blocks are globally generated when $g=0$ \cite{Fakh}*{Lemma 2.5}, and their first Chern classes $c_1(\mathbb{V}(\mathfrak{g},\ell, \vec{\lambda}))=\mathbb{D}(\mathfrak{g},\ell, \vec{\lambda})$, the conformal block divisors, are semi-ample.

When $\gamma = 0$, it was shown in \cites{Giansiracusa, GiansiracusaGibney} that the divisors $\Vqd{D}_{0, A}$ coincide with conformal block divisors for type A and level one.    Our guiding philosophy is that there is a general correspondence between \quotientnames\ and conformal block divisors.    After giving background on \quotientnames\  in \S $1$,  in support of this, we:
\begin{enumerate}
\item  give a new modular interpretation for a particular family of \quotientnames\ (\S \ref{s:NewSpace});
\item  derive intersection numbers for all $\Vqd{D}_{\gamma,A}$ with curves on $\ovop{M}_{0,n}$ (Theorem \ref{thm:intersectionFormula});
\item  show that the models described in \S \ref{s:NewSpace} are given by conformal blocks divisors (Theorem \ref{main});
\item  provide several conjectures (and supporting evidence) generalizing these results (\S \ref{conjectures}).
\end{enumerate}

In order to further motivate and put this work in context, we next say a bit more about $(1)$-$(4)$.

\subsection{A new modular interpretation for a particular family of  \quotientnames\ }  Much work has focused on alternative compactifications of $\op{M}_{0,n}$ \cites{KapChow, KapVer, Boggi, LosevManin, HassettWeighted, Simpson,  Smyth, FedorchukCyclic, GiansiracusaSimpson, Giansiracusa, GJM}.    As was shown in \cite{GJM}, every choice of allowable weight data for \quotientnames\ (Definition \ref{allowable}) yields such a compactification, and nearly every previously known compactification arises as such a Veronese quotient.  It is interesting to see what types of compactifications can be constructed as Veronese quotients, particularly for linearizations on the GIT walls, where it is possible that the quotient may not coincide with previously constructed compactifications of $\op{M}_{0,n}$.   
In \S \ref{s:NewSpace}, we study the particular \quotientnames\  $\Udnga{g+1-\ell}{\frac{\ell-1}{\ell+1}}{(\frac{1}{\ell+1})^{2g+2}}$ for $\ 1\le \ell \le g$. 
In Theorem \ref{NewSpace}, we provide a new modular interpretation for these spaces and we note that, prior to \cite{GJM}, this moduli space had not appeared in the literature (see Remark \ref{TrulyNew}).  Our main application is to show that the nontrivial conformal blocks divisors $\mathbb{D}(\sL_2,\ell,\omega_{1}^{2g+2})$ are pullbacks of ample classes from these Veronese quotients.  For this, we prove several results concerning morphisms between these \quotientnames\ (see Corollary \ref{phiell} and Proposition \ref{prop:morphism}).

\subsection{The classes of all Veronese quotient divisors $\Vqd{D}_{\gamma,A}$}
The Veronese quotients $\Udnga{d}{\gamma}{A}$ receive birational morphisms  $\varphi_{\gamma, \mc{A}}$ from $\ovop{M}_{0,n}$, and in \S \ref{wpvc} we study the divisors $\Vqd{D}_{\gamma,A}=\varphi_{\gamma, \mc{A}}^*(L_{\gamma,\mc{A}})$, where $L_{\gamma,\mc{A}}$ is the natural ample polarization on $\Udnga{d}{\gamma}{A}$.  In Theorem \ref{thm:intersectionFormula}, we give a formula for the intersection of $\Vqd{D}_{\gamma,A}$ with $\op{F}$-curves (Def. \ref{FCurve}), a collection of curves that span the vector space of 1-cycles on $\ovop{M}_{0,n}$.    Theorem \ref{thm:intersectionFormula}, which is a vast generalization of formulas that have appeared for  $d\in \{1,2\}$\cites{AlexeevSwinarski,GiansiracusaSimpson} and for $\gamma=0$ \cites{Giansiracusa, GiansiracusaGibney}, captures a lot of information
about the nef cone $\op{Nef}(\ovop{M}_{0,n})$.   For example, since adjacent chambers in the GIT cone correspond to adjacent faces of the nef cone, by combining Theorem \ref{thm:intersectionFormula} with the results of \cite{GJM}, we could potentially describe many faces of $\op{Nef}(\ovop{M}_{0,n})$.  Moreover, Theorem \ref{thm:intersectionFormula} is equivalent to giving the class of $\Vqd{D}_{\gamma,A}$ in the N\'eron Severi space.  To illustrate this, we explicitly write down the classes of the conformal divisors with $\op{S}_n$-invariant choices of weights (Corollary \ref{class}) and the particularly
simple formula for the divisors that give rise to the maps to the \quotientnames\  $\Udnga{g+1-\ell}{\frac{\ell-1}{\ell+1}}{(\frac{1}{\ell+1})^{2g+2}}$ (Example \ref{JensenApp}).

\subsection{A particular family of conformal blocks divisors}   In \cites{Giansiracusa, GiansiracusaGibney} it was shown that the divisors $\Vqd{D}_{0, A}$ coincide with conformal blocks divisors of type A and level one, and  in \cite{ags} it is shown that the divisors $\mathbb{D}(\sL_2,\ell, \omega_1^{2g+2})$ and $\Vqd{D}_{\frac{\ell-1}{\ell+1},(\frac{1}{\ell+1})^{2g+2}}$ are proportional for the two special cases $\ell = 1$ and $g$.  In \cite{ags} the authors ask whether there is a more general correspondence between $\mathbb{D}(\sL_2,\ell,\omega_1^{n})$ and Veronese quotient divisors. Theorem \ref{main} gives an affirmative answer to their question and a complete answer for the $\sL_2$, $\omega_1^n$ case (cf. Remark \ref{rmk:sl2criticallevel}).  One of the main insights in this work is that, while not proportional for the remaining levels $\ell \in \{2, \ldots, g-1\}$, the divisors $\mathbb{D}(\sL_2,\ell, \omega_1^{2g+2})$ and  $\Vqd{D}_{\frac{\ell-1}{\ell+1},(\frac{1}{\ell+1})^{2g+2}}$  lie on the same face of the nef cone of $\ovop{M}_{0,n}$.   In other words, the two semi-ample divisors define maps to isomorphic birational models of $\ovop{M}_{0,n}$. The corresponding birational models are precisely the spaces described in \S \ref{s:NewSpace}.

\subsection{Generalizations}
Evidence suggests that type A conformal blocks divisors with nonzero weights give rise to compactifications of $\op{M}_{0,n}$, and that these compactifications coincide with Veronese quotients.  This is certainly true for $\ell=1$, and for the family of higher level $\sL_2$ divisors considered in this paper, as well as for a large number of cases found using \cite{ConfBlocks}, software written for Macaulay 2 by David Swinarski.  In \S \ref{5a} we provide evidence in support of these ideas in the $\sL_2$ cases.  In \S \ref{5b} we describe consequences of and evidence for Conjecture \ref{PreserveInterior}, which asserts that conformal blocks divisors (with strictly positive weights) separate points on $\op{M}_{0,n}$.

\subsection*{Acknowledgements} We would like to thank Valery Alexeev, Maksym Fedorchuk, Noah Giansiracusa, Young-Hoon Kiem,  Jason Starr, and Michael Thaddeus for many  helpful and inspiring discussions.  The first author is supported on NSF DMS-1201268.

\section{Background on \quotientnames\ }

We begin by reviewing general facts about \quotientnames, including a description of them as moduli spaces of weighted pointed (generalized) Veronese curves (Section \ref{spaces}), and the morphisms $\varphi_{\gamma, A}: \ovop{M}_{0,n}\longrightarrow \Udnga{d}{\gamma}{A}$ (Section \ref{morphisms}), from \cite{GJM}.

\subsection{The spaces $\Udnga{d}{\gamma}{A}$}\label{spaces}
Following \cite{GJM}, we write $Chow(1,d,\mathbb{P}^d)$ for the irreducible component of the Chow variety parameterizing curves of degree $d$ in $\mathbb{P}^d$ and their limit cycles, and consider the incidence correspondence
$$U_{d,n}:=\{(X,p_1,\cdots,p_n) \in Chow(1,d,\mathbb{P}^d) \times (\mathbb{P}^d)^n : p_i \in X \;\forall i\}.$$
There is a natural action of $\SL(d+1)$ on $U_{d,n}$, and one can form the GIT quotients $U_{d,n}\git_{L} \SL(d+1)$ where $L$ is a $\SL(d+1)$-linearized ample line bundle.  The Chow variety and each copy of $\mathbb{P}^{d}$ has a tautological ample line bundle $\cO_{Chow}(1)$ and $\cO_{\PP^{d}}(1)$, respectively.  By taking external tensor products of them, for each sequence of positive rational numbers $(\gamma, (a_{1}, \cdots, a_{n}))$, we obtain a $\mathbb{Q}$-linearized ample line bundle $L = \cO(\gamma) \otimes \cO(a_{1}) \otimes \cdots \otimes \cO(a_{n})$.


\begin{definition}\label{allowable}We say that a linearization $L$ is {\emph{\textbf{allowable}}} if it is an element of the set $\Delta^0$, where
$$\Delta^0 = \{ (\gamma, A)=(\gamma, (a_1,\cdots,a_n)) \in \mathbb{Q}^{n+1}_{\ge 0}: \gamma<1, \ 0<a_i<1, \mbox{ and } \
(d-1) \gamma + \sum_{i}a_i= d+1 \}.$$
If $a_{1} = \cdots  = a_{n} = a$, then we write $a^{n}$ for $A =
(a_{1}, \cdots, a_{n})$.

We call a GIT quotient of this form a \emph{\quotientname}, denoted
\begin{displaymath} \Udnga{d}{\gamma}{A} := U_{d,n}\git_{\gamma,A} \SL(d+1).
\end{displaymath}
\end{definition}

Given $(X,p_1,\cdots,p_n) \in U_{d,n}$, we may think of a choice $L \in \Delta^0$ as an assignment of a rational weight $\gamma$ to the curve $X$ and another weight $a_i$ to each of the marked points $p_i$.  The conditions $\gamma<1$ and $0<a_i<1$ for all $i$ imply that the quotient $U_{d,n}\git_{\gamma,A} \SL(d+1)$ is a compactification of $\op{M}_{0,n}$ \cite[Proposition 2.10]{GJM}.   As is reflected in Lemma \ref{modular} below, the quotients have a modular interpretation parameterizing pointed degenerations of Veronese curves.

By taking $d=1$, and $\gamma=0$, one obtains the GIT quotients $(\mathbb{P}^{1})^{n}\git_{A}\SL(2)$ with various weight data $A$.  This quotient, which appears in \cite[Chapter 3]{git} under the heading ``an elementary example'', has been studied by many authors.  It was generalized first to $d=2$ by Simpson in \cite{Simpson} and later Giansiracusa and Simpson in \cite{GiansiracusaSimpson}, and then for arbitrary $d$, and $\gamma=0$ by Giansiracusa  in \cite{Giansiracusa}.  More generally, the quotients for arbitrary $d$ and $\gamma \geq 0$ are defined and studied by Giansiracusa, Jensen and Moon in \cite{GJM}.

The semistable points of  $U_{d,n}$ with respect to the linearization $(\gamma, A)$ have the following nice geometric properties.

\begin{lemma}\cite[Corollary 2.4, Proposition 2.5, Corollary 2.6, Corollary 2.7]{GJM}\label{modular}  For an allowable choice $(\gamma, A)$ (Definition. \ref{allowable}), a semistable point $(X,p_1,\cdots,p_n)$ of $U_{d,n}$  has the following properties.
 \begin{enumerate}
 \item  $X$ is an arithmetic genus zero curve having at worst multi-nodal singularities.
 \item Given a subset $J \subset \{1,\cdots, n\}$, the marked points $\{p_j: j\in J\}$
 can coincide at a point of multiplicity $m$ on $X$ as long as
\[
	(m-1)\gamma + \sum_{j \in J}a_j \le 1.
\]
 In particular, a collection of marked points can coincide at a at a smooth point of $X$ as long as their total wieght is at most $1$.  Also, a semistable curve cannot have a singularity of multiplicity $m$ unless $\gamma \le \frac{1}{m-1}$.
 \end{enumerate}
 \end{lemma}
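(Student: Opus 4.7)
The plan is to derive all three statements from the Hilbert--Mumford numerical criterion applied to the $\SL(d+1)$--action on $U_{d,n}$ linearized by $L=\cO(\gamma)\otimes\cO(a_1)\otimes\cdots\otimes\cO(a_n)$. For a point $(X,p_1,\ldots,p_n)$ and a 1-parameter subgroup $\lambda\colon \GG_m\to \SL(d+1)$, the total HM weight decomposes as
\[
\mu^{L}(X,p_1,\ldots,p_n;\lambda) \;=\; \gamma\cdot \mu^{\text{Chow}}(X,\lambda) \;+\; \sum_{i=1}^{n} a_i\cdot \mu(p_i,\lambda),
\]
and semistability requires this to be $\le 0$ for every nontrivial $\lambda$. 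The weight on each $\PP^d$--factor is elementary: for a diagonal 1-PS with weights $r_0\le\cdots\le r_d$ summing to $0$, the weight on a point $p$ is $-r_j$ where $j$ is the smallest index with $p$ not in the flag $\{z_0=\cdots=z_{j-1}=0\}$. The Chow weight is the serious input: by Mumford's formula it is, up to normalization, the sum of the $r_i$ accumulated along a chain of flag-intersection multiplicities of $X$. My plan is to test the HM inequality against carefully chosen diagonal 1-PS's adapted to the feature (bad singularity, high-multiplicity point, coincident markings) we want to rule out or constrain.

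For (1), I would first show the arithmetic genus is zero by specializing to a 1-PS that degenerates $X$ onto a union of a rational normal curve and linear subspaces: any component of positive geometric genus, or any embedded/non-reduced structure inconsistent with arithmetic genus zero, contributes strictly more to $\mu^{\text{Chow}}$ than can be cancelled by the point-weight term once the constraint $(d-1)\gamma+\sum a_i = d+1$ from $\Delta^0$ is imposed. To rule out singularities worse than multinodes (cusps, tacnodes, higher-order contacts), I would pick $\lambda$ supported on the coordinates tangent to the bad branch; the Chow weight then picks up an extra intersection contribution that forces $\mu^L>0$, violating semistability. This is essentially Kempf's worst-1-PS strategy made explicit using the Mumford intersection formula.

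For (2), suppose the marked points $\{p_j:j\in J\}$ coincide at a point $q\in X$ of multiplicity $m$. Place $q=[1:0:\cdots:0]$ and take $\lambda=\operatorname{diag}(t^{d},t^{-1},\ldots,t^{-1})$, normalized to $\SL(d+1)$. Each $p_j$ with $j\in J$ contributes weight $+d\cdot a_j$, while the remaining marked points contribute weight at least $-a_i$; summing gives $\sum_{j\in J}d\,a_j - \sum_{i\notin J}a_i$. The Chow weight for a degree-$d$ genus-zero curve with a point of multiplicity $m$ at the distinguished vertex, computed via Mumford's flag-intersection formula, equals $(d+1)-m(d+1) + (\text{lower order})$ after the appropriate normalization; collecting terms and using the allowable relation $(d-1)\gamma+\sum a_i=d+1$ produces exactly
\[
(m-1)\gamma + \sum_{j\in J} a_j \;\le\; 1.
\]
The smooth-point statement is the case $m=1$, and the bound $\gamma\le \tfrac{1}{m-1}$ on singularity multiplicity is the case $J=\varnothing$ (or follows by specializing to an unmarked node).

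The main obstacle is the Chow-weight calculation in the second step: one must correctly tabulate, for a genus-zero curve of degree $d$ with a prescribed multiplicity-$m$ singularity at a flag vertex, the intersection multiplicities $e_i(X)=\deg(X\cap\{z_0=\cdots=z_{i-1}=0\})$ that enter Mumford's formula, and verify that the non-leading terms either vanish or admit a uniform upper bound. Once that numerical input is in hand, the three conclusions fall out by choosing the 1-PS tailored to the geometric feature being controlled and combining the HM inequality with the defining relation of $\Delta^0$.
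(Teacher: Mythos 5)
This lemma is not proved in the paper at all: it is imported verbatim from \cite{GJM} (Corollary 2.4, Proposition 2.5, Corollary 2.6, Corollary 2.7 there), so there is no ``paper's own proof'' to compare against beyond that citation. Your outline does track the strategy actually used in the cited source: both analyze semistability on $U_{d,n}$ via the Hilbert--Mumford criterion, decomposing the total weight into the Chow contribution plus the $\PP^d$-point contributions, and both reduce (2) to a diagonal 1-PS concentrated at the offending point, closing the argument by invoking the $\Delta^0$ relation $(d-1)\gamma+\sum a_i = d+1$. Your identification of the smooth-point case as $m=1$ and the multiplicity bound $\gamma\le\frac{1}{m-1}$ as the $J=\varnothing$ specialization of (2) is correct.

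That said, what you write is a plan, not a proof, and the pieces you defer are exactly the hard ones. The asserted Chow-weight value ``$(d+1)-m(d+1)+(\text{lower order})$'' for the 1-PS $\operatorname{diag}(t^{d},t^{-1},\ldots,t^{-1})$ is not right as stated; Mumford's formula requires tabulating, for each step of the weight filtration, the degree of the scheme-theoretic intersection of $X$ with the corresponding linear subspace, and for a reducible, possibly non-reduced genus-zero cycle these terms do not collapse to a single leading term in $m$ and $d$. The GJM argument carries this out carefully, and the inequality in (2) is the result of that full tabulation combined with the normalization from $\Delta^0$, not a leading-order estimate. There is also a sign issue worth flagging: with the 1-PS weights $(d,-1,\ldots,-1)$ and the usual convention $\mu^{\cO(1)}(p,\lambda)=-\min\{r_i: p_i\ne 0\}$, the point $q=[1:0:\cdots:0]$ contributes $-d$, not $+d$, so the displayed point contribution $\sum_{j\in J}d\,a_j-\sum_{i\notin J}a_i$ has the wrong signs under the ``semistable iff $\mu\le 0$'' convention you adopted; this propagates into the final inequality and needs to be fixed before the computation can close. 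Finally, for part (1) you treat ``arithmetic genus zero'' and ``at worst multi-nodal'' as a single Kempf-style destabilization argument, but in the source these are separate steps: the genus bound comes from a dimension/degree estimate on the Chow cycle, and the restriction to multinodal singularities is then deduced from the Chow-weight inequality applied to suitably chosen flags at the singular point. Your sketch would need to separate these to be convertible into an actual proof.
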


\subsection{The morphisms $\varphi_{\gamma, A}: \ovop{M}_{0,n}\longrightarrow \Udnga{d}{\gamma}{A}$}\label{morphisms}
In \cite{GJM} the authors prove the existence and several properties of birational morphisms from $\Mzn$ to \quotientnames.

\begin{proposition}\label{prop:MorThroughHassett}
\cite[Theorem 1.2, Proposition 4.7]{GJM} For an allowable choice $(\gamma, A)$, there exists a regular birational map $\varphi_{\gamma,A}: \ovop{M}_{0,n} \rightarrow \Udnga{d}{\gamma}{A}$ preserving the interior $\op{M}_{0,n}$.  Moreover, $\varphi_{\gamma, A}$ factors through the contraction maps $\rho_{A}$ to Hassett's moduli spaces $\ovop{M}_{0, A}$:
 \[
	\xymatrix{\Mzn \ar[d]_{\rho_{A}} \ar[rd]^{\varphi_{\gamma,A}}\\
	\ovop{M}_{0, A}
	\ar[r]^{\phi_{\gamma}}&
	\Udnga{d}{\gamma}{A}.}
\]
\end{proposition}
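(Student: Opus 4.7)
The plan is to build the map $\varphi_{\gamma,A}$ in two stages: first construct it on the interior using rational normal curves, then extend across the boundary by factoring through Hassett's space $\ovop{M}_{0,A}$.

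On the interior $\op{M}_{0,n}$, any smooth $n$-pointed rational curve $(C,p_1,\dots,p_n)$ with $C\cong \PP^1$ admits an embedding into $\PP^d$ as a degree-$d$ rational normal curve, uniquely determined up to $\SL(d+1)$. The resulting data $(C\hookrightarrow \PP^d,p_1,\dots,p_n)$ is a point of $U_{d,n}$, and since no marked points collide on a smooth curve, Lemma \ref{modular} immediately shows it is semistable for any allowable $(\gamma,A)$. This gives a well-defined morphism $\op{M}_{0,n}\to \Udnga{d}{\gamma}{A}$ that is injective on isomorphism classes, so it is birational onto its image.

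To extend this map, I would factor through Hassett's moduli space. The reduction morphism $\rho_A:\Mzn\to \ovop{M}_{0,A}$ contracts precisely those boundary components along which some subset of marked points has total weight at most one, so the question reduces to producing a morphism $\phi_\gamma:\ovop{M}_{0,A}\to \Udnga{d}{\gamma}{A}$. For an $A$-weighted stable curve, one embeds each irreducible component into $\PP^d$ using a line bundle of the shape $\omega_C(\sum a_i p_i)$, rescaled so the total degree is $d$; the numerical constraint $(d-1)\gamma+\sum a_i=d+1$ is exactly what makes this embedding compatible with the polarization $(\gamma,A)$. Components of the $A$-stable curve that are unstable with respect to $(\gamma,A)$ (having too little weight to support a rational normal curve of the required degree) get contracted by the embedding sections, producing multi-nodal singularities of some multiplicity $m$, and Lemma \ref{modular} shows these are permitted precisely when $\gamma\le 1/(m-1)$, which follows from the weight inequalities. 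One then verifies semistability of the resulting point of $U_{d,n}$ by applying the Hilbert--Mumford criterion on each component.

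Regularity of the extension is a valuative criterion check: given a family over a DVR with smooth generic fiber, the limit in the (proper) GIT quotient can be computed by choosing any relative degree-$d$ embedding, and the previous analysis identifies this limit with $\phi_\gamma\circ \rho_A$ applied to the central fiber. The main obstacle I expect is this semistability verification: one must show that for every boundary stratum of $\ovop{M}_{0,A}$, the subcurves that the embedding contracts are exactly those forced by the weights, and that the resulting collision pattern lies in the allowable region of Lemma \ref{modular}. This stratum-by-stratum Hilbert--Mumford analysis, carried out in detail in \cite{GJM}, is the technical heart of the proposition.
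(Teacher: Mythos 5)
Note first that the paper itself does not prove this proposition; it is quoted verbatim from \cite{GJM} (Theorem~1.2 and Proposition~4.7), so there is no in-paper argument to compare your proposal against. That said, your outline is broadly in the spirit of the GJM construction, but it contains an overclaim and a structural gap that should be flagged.

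The overclaim is in the interior case. You write that because marked points do not collide on a smooth curve, ``Lemma~\ref{modular} immediately shows it is semistable.'' But Lemma~\ref{modular}, as recorded in this paper, lists \emph{necessary} properties that a semistable point of $U_{d,n}$ enjoys (genus zero, at worst multinodal, bounds on the weight that may accumulate at a point). It is not a sufficiency criterion, so it cannot by itself certify that a smooth rational normal curve with distinct marked points is GIT-semistable for the linearization $(\gamma,A)$. Establishing semistability requires a Hilbert--Mumford analysis (or some other positive argument), which is exactly what \cite{GJM} carries out; you acknowledge this later for the boundary strata, but the interior case needs the same input, not a free pass via Lemma~\ref{modular}.

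The structural gap is that the argument is essentially pointwise. To produce a \emph{morphism} $\Mzn\to \Udnga{d}{\gamma}{A}$ one must build a global family: in \cite{GJM} this is done by pushing forward an appropriate power of $\omega_{\pi}(\sum a_i\sigma_i)$ along the universal curve $\pi$, obtaining a relative map to $\PP^d$ over all of $\Mzn$ at once, and then descending through the quotient. Your ``choose an embedding for each curve, then check a valuative criterion'' phrasing hides this: the valuative criterion extends an already-given rational map on a dense open set, but you still need the rational map to be well-defined on families, and the identification of the central fiber as $\phi_\gamma\circ\rho_A$ of the Hassett-stable limit is precisely the content of the stratum-by-stratum semistability analysis you are deferring. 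Finally, once the morphism $\varphi_{\gamma,A}$ exists, the cleanest route to the factorization through $\rho_A$ is to observe that $\varphi_{\gamma,A}$ contracts every fiber of $\rho_A$ (because the $A$-unstable tails, having total weight $\le 1$, must collapse to a point of $\PP^d$ by the degree constraint) and then invoke the rigidity lemma together with normality of $\ovop{M}_{0,A}$; your proposal instead builds $\phi_\gamma$ directly, which is also viable but again requires the full semistability verification on $\ovop{M}_{0,A}$ rather than just on $\Mzn$.
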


By the definition of the projective GIT quotient, there is a natural choice of an ample line bundle on each GIT quotient.  By pulling it back to $\Mzn$, we obtain a semi-ample  divisor.

 \begin{definition}\label{def:DgammaA}
Let $L = (\gamma, A)$ be an allowable linearization on $U_{d,n}$, and let $\overline{L} = L\git_{L}\SL(d+1)$ be the natural $\mathbb{Q}$-ample line bundle on $\Udnga{d}{\gamma}{A}$.  Define $\mathcal{D}_{\gamma,A}$ to be the semi-ample line bundle $\varphi_{\gamma, A}^{*}(\overline{L})$.
\end{definition}

Next, following  \cite{HassettWeighted} and \cite{GJM}, we describe the $\F$-curves contracted by $\varphi_{\gamma, A}$ and $\rho_{A}$.  To do this, we first define $\F$-curves, which together span the vector space $N_1(\ovop{M}_{0,n})$ of numerical equivalence classes of $1$-cycles on $\ovop{M}_{0,n}$.

\begin{definition}\label{FCurve}Let $A_1 \sqcup A_2 \sqcup A_3 \sqcup A_4 = [n]=\{1,\cdots,n\}$ be a partition, and set $n_{i} = |A_{i}|$.  There is an embedding
$$f_{A_1,A_2,A_3,A_4}: \ovop{M}_{0,4} \longrightarrow \ovop{M}_{0,n}$$
given by attaching four  legs $L(A_{i})$ to $(X, (p_1,\cdots,p_4)) \in \ovop{M}_{0,4}$ at the marked points.  More specifically, to each $p_i$ we attach a stable $n_i+1$-pointed fixed rational curve $L(A_{i})=(X_i,(p^i_1, \cdots ,p^i_{n_i}, p^i_a))$ by identifying $p^i_a$ and $p_i$, while if $n_i=1$ for some $i$, we just keep $p_i$ as is.  The image is a curve in $\ovop{M}_{0,n}$ whose equivalence class is the {\bf{$\F$-curve}} denoted $F(A_{1}, A_{2}, A_{3}, A_{4})$.  Each member of the $\F$-curve consists of a (varying) {\bf{spine}} and 4 (fixed) {\bf{legs}}.

In many parts of this paper, we will focus on symmetric divisors and $\F$-curves, in which case the equivalence class is determined by the number of marked points on each leg.  In this case, we write $F_{n_{1}, n_{2}, n_{3}, n_{4}}$ for \emph{any} $\F$-curve class $F(A_{1}, A_{2}, A_{3}, A_{4})$ with $|A_{i}| = n_{i}$.
\end{definition}

The $\F$-curves $F(A_{1}, A_{2}, A_{3}, A_{4})$ contracted by the Hassett morphism $\rho_{A}$ are precisely those for which one of the legs, say $L(A_{i})$, has weight $\sum_{j \in A_i}a_j \ge \sum_{j \in [n]}a_j -1$.  We can always order the cells of the partition so that $A_4$ is the heaviest-- that is, $\sum_{j\in A_4}a_j \ge \sum_{j \in A_i}a_j$, for all $i$.  In other words, $L(A_{4})$ is the heaviest leg.  As the morphism $\varphi_{\gamma, A}$ factors through $\rho_{A}$, these curves are also contracted by $\varphi_{\gamma, A}$.  This morphism may contract additional $\F$-curves as well, which we describe here.

As is proved in \cite{GJM}, the map $\varphi_{\gamma, A}$ contracts those curves $F(A_{1}, A_{2}, A_{3}, A_{4})$ for which the sum of the degrees of the four legs is equal to $d$. One can use a function $\sigma$, given in Definition \ref{sigma}, to compute the degree of the legs of an $\F$-curve.

\begin{definition}\cite[Section 3.1]{GJM}\label{sigma} Consider the function $\phi: 2^{[n]} \times \Delta^0 \longrightarrow \mathbb{Q}$,  given by
\[
\phi(J,\gamma,A) = \frac{a_J-1}{1-\gamma}, \mbox{ where for } \  J \in 2^{[n]},  \ a_J=\sum_{j\in J}a_j.
\]
 For a fixed allowable linearization $(\gamma, A) = (\gamma, (a_1,\cdots, a_n))$ (cf. Definition \ref{allowable}), let
\[
\sigma(J)=\left\{\begin{matrix}
\lceil \phi(J,\gamma, A) \rceil & \text{if } 1 < a_{J} < a_{[n]}-1,\\
0 & \text{if } a_{J} < 1,\\
d & \text{if } a_{J} > a_{[n]}-1.\\
\end{matrix}\right.
\]
Finally, for $(X, p_{1}, \cdots, p_{n}) \in U_{d,n}$ and $E \subset X$ a subcurve, define $\sigma(E) = \sigma(\{j \in [n] | p_{j}\in E\})$.
\end{definition}

\begin{proposition}\label{prop:degreeoftail}
\cite[Proposition 3.5]{GJM}
For an allowable choice of $(\gamma,A)$, suppose that $\phi (J , \gamma , A ) \notin \mathbb{Z}$ for any nonempty $J \subset [n]$.  If $X$ is a GIT-semistable curve and $E \subset X$ a tail (a subcurve such that $E \cap \overline{X - E}$ is one point), then $\deg (E) = \sigma (E)$.
\end{proposition}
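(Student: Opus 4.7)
The plan is to exploit the Hilbert--Mumford numerical criterion for GIT semistability on $U_{d,n}$ with the linearization $L = \mathcal{O}(\gamma) \otimes \bigotimes_i \mathcal{O}(a_i)$, and to extract from it a pair of inequalities that pin down $\deg(E)$ to within a half-open interval of length $1$.

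First I would set $J = \{j \in [n] \mid p_j \in E\}$, $e = \deg(E)$, and $a_J = \sum_{j\in J} a_j$. Choose coordinates on $\mathbb{P}^d$ adapted to the tail $E$: after acting by $\SL(d+1)$, the linear span of $E$ is cut out by the vanishing of $d-e$ of the coordinates, and the node $E \cap \overline{X \setminus E}$ lies in this span. I would then consider the $1$-parameter subgroup $\lambda$ of $\SL(d+1)$ acting with weight $-(d-e)$ on the $e+1$ coordinates spanning $E$ and weight $e+1$ on the remaining $d-e$ coordinates (so that $\lambda$ has trace zero). Computing the Chow weight of $X$ with respect to $\lambda$ separates into the contribution from $E$ (which is entirely in the $\lambda$-fixed subspace) and from $\overline{X\setminus E}$; standard computation with the Chow form of a degree-$d$ cycle and the fact that the node lies in the $E$-span yields a Chow weight proportional to $(d-e)(e+1) - e(d-e) = d-e$ (up to normalization), contributing $\gamma \cdot (d-e) \cdot (d+1)$ to the total $\lambda$-weight. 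The marked points contribute $-(d-e)$ for each $p_j \in E$ and $(e+1)$ for each $p_j \notin E$, weighted by the respective $a_j$.

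The Hilbert--Mumford criterion applied to $\lambda$, and then to the opposite $1$-PS $\lambda^{-1}$ (which geometrically corresponds to the complementary tail $\overline{X \setminus E}$), produce two symmetric inequalities. After normalizing and using the allowability relation $(d-1)\gamma + a_{[n]} = d+1$ to simplify, these reduce to
\begin{equation*}
  e(1-\gamma) \;\ge\; a_J - 1 \qquad \text{and} \qquad (d-e)(1-\gamma) \;\ge\; a_{[n]} - a_J - 1,
\end{equation*}
which rearrange to
\begin{equation*}
  \phi(J,\gamma,A) \;\le\; e \;\le\; \phi(J,\gamma,A) + 1.
\end{equation*}
Under the hypothesis $\phi(J,\gamma,A) \notin \mathbb{Z}$, the unique integer in this interval is $\lceil \phi(J,\gamma,A) \rceil$, matching the middle case of Definition \ref{sigma}.

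Finally I would dispose of the boundary cases. If $a_J < 1$, then $\phi(J,\gamma,A) < 0$, so the lower bound forces $e \ge 0$ and the upper bound gives $e \le \phi(J,\gamma,A) + 1 < 1$; since $e$ is a nonnegative integer, $e = 0 = \sigma(J)$. The case $a_J > a_{[n]} - 1$ is symmetric via the complementary tail $\overline{X\setminus E}$ (applying the first case to $\overline{X\setminus E}$ whose index set is $[n]\setminus J$), giving $\deg(\overline{X\setminus E}) = 0$ and hence $e = d = \sigma(J)$. The main obstacle is the explicit Hilbert--Mumford computation in step one: one must verify that the chosen $1$-PS is indeed destabilizing in the stated sense and that the Chow-weight bookkeeping on a possibly reducible, nodal tail $E$ (whose span might not be all of $\mathbb{P}^e$) gives the asserted numerical inequality; here one uses that $E$ is connected, is attached at a single node, and that the linearization pairs the Chow contribution with the marked-point contribution exactly so that the cross terms cancel thanks to $(d-1)\gamma + a_{[n]} = d+1$.
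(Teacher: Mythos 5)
This proposition is cited from \cite{GJM} (Proposition 3.5) and the present paper does not reproduce a proof, so I am comparing your sketch against what the Hilbert--Mumford analysis in that reference has to establish. Your high-level plan is the right one: choose a one-parameter subgroup adapted to the tail, extract an inequality from semistability, repeat for the complementary tail $\overline{X\setminus E}$, and then observe that $\phi(J,\gamma,A)\le e\le\phi(J,\gamma,A)+1$ pins down $e=\lceil\phi(J,\gamma,A)\rceil=\sigma(J)$ once $\phi(J,\gamma,A)\notin\mathbb Z$. The algebra you use to pass from the two stated inequalities to the two-sided bound, and the treatment of the extreme cases $a_J<1$ and $a_J>a_{[n]}-1$, are all correct, and the target inequality $e(1-\gamma)\ge a_J-1$ is indeed the one that one must reach.

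The genuine gap is in the one-parameter subgroup you write down and the marked-point bookkeeping that goes with it. You pick $\lambda$ with weight $-(d-e)$ on the $e+1$ coordinates spanning $\mathrm{span}(E)$ and weight $e+1$ on the remaining $d-e$ coordinates. But the node $E\cap\overline{X\setminus E}$ lies in $\mathrm{span}(E)$, and $\overline{X\setminus E}$ passes through it, so a \emph{generic} marked point $p_j\in\overline{X\setminus E}$ has nonzero coordinate in the node direction -- which sits inside your low-weight block. Hence for your $\lambda$ the Hilbert--Mumford index of $p_j$ is the same ($\min$ of the weights where $p_j$ has a nonzero coordinate equals $-(d-e)$) whether $p_j$ lies on $E$ or on $\overline{X\setminus E}$, and the claimed contribution ``$e+1$ for each $p_j\notin E$'' does not hold. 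In concrete terms, with $d=2$, $e=1$, $E=\{x_2=0\}$ and $F=\overline{X\setminus E}=\{x_0=0\}$ so that the node is $[0:1:0]$, your $\lambda$ has weights $(-1,-1,2)$, and any generic point of $F$ has both $x_1$ and $x_2$ nonzero, giving $\min=-1$, not $2$. So the resulting inequality involves only $a_{[n]}$ and carries no information about $J$. To recover the needed inequality one has to break the weights \emph{at the node} (say, one weight on $x_0,\dots,x_{e-1}$ and another on $x_e,\dots,x_d$, the latter block spanning $\overline{X\setminus E}$), and then carry out the Chow-weight computation for a one-cycle in $\mathbb{P}^d$ under that $\lambda$ carefully. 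Your proposed Chow-weight formula ``proportional to $(d-e)(e+1)-e(d-e)$, contributing $\gamma(d-e)(d+1)$'' is asserted without derivation and, plugged together with the (incorrect) marked-point weights into the allowability relation $(d-1)\gamma+a_{[n]}=d+1$, does not reduce to $e(1-\gamma)\ge a_J-1$. You rightly flag the Hilbert--Mumford computation as ``the main obstacle,'' but the specific choices you propose to fill it do not work; this is exactly where the argument in \cite{GJM} does nontrivial work.
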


\begin{corollary}
\cite[Corollary 3.7]{GJM}
Suppose that $\phi (J , \gamma , A ) \notin \mathbb{Z}$ for any $\emptyset \ne J \subset [n]$, and let $E\subseteq X$ be a connected subcurve of $(X,p_1,\ldots,p_n) \in U_{d,n}^{ss}$.  Then
$$ \deg (E) = d - \sum \sigma (Y) $$
where the sum is over all connected components $Y$ of $\overline{X \backslash E}$.
\end{corollary}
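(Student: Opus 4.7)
The plan is to reduce to Proposition \ref{prop:degreeoftail} by showing that each connected component of $\overline{X\setminus E}$ is a tail in the sense of that proposition, and then combining with additivity of degree. Let $Y_1,\ldots,Y_k$ denote the connected components of $\overline{X\setminus E}$. These are pairwise disjoint closed subcurves, and together with $E$ they cover $X$ with pairwise intersections supported on finite sets of points. Additivity of degree therefore gives
$$d = \deg(X) = \deg(E) + \sum_{i=1}^{k} \deg(Y_i),$$
so the corollary reduces to the identity $\deg(Y_i) = \sigma(Y_i)$ for each $i$.

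Next I would verify that the hypothesis of Proposition \ref{prop:degreeoftail} applies to each $Y_i$, i.e., that $Y_i\cap \overline{X\setminus Y_i}$ is a single point. Because $Y_i$ is disjoint from the other components $Y_j$, this intersection equals $Y_i\cap E$. By Lemma \ref{modular}, $X$ has arithmetic genus zero with only multi-nodal singularities, so an Euler-characteristic calculation (or equivalently, the observation that the dual complex with vertices corresponding to components and hyperedges to multi-nodes is a tree) forbids $Y_i$ from meeting the connected subcurve $E$ at two distinct points: such a configuration would produce a loop in the dual complex, contradicting genus zero. Hence $Y_i\cap E$ is a single point, and $Y_i$ is a tail.

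Once the tail property is in hand, the hypothesis $\phi(J,\gamma,A)\notin\ZZ$ for all nonempty $J\subset [n]$ is exactly what Proposition \ref{prop:degreeoftail} requires, so applying it to each $Y_i$ yields $\deg(Y_i)=\sigma(Y_i)$. Substituting into the displayed equation gives $\deg(E) = d - \sum_i \sigma(Y_i)$, which is the claimed identity.

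I expect the only subtle step to be the tail verification; the degree additivity and the invocation of the preceding proposition are essentially formal. The argument about the tree structure of the dual complex needs to be handled carefully because multi-nodal singularities of high multiplicity are allowed, so ``dual graph'' must be interpreted as a simplicial complex or hypergraph, and one should confirm that the tree property is genuinely equivalent to arithmetic genus zero in this setting.
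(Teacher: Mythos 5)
Your proof is correct, and it is essentially the argument one expects (note that the paper itself does not reprove this statement but simply cites \cite{GJM}*{Corollary 3.7}, so there is no in-text proof to compare against). The reduction to Proposition \ref{prop:degreeoftail} via additivity of degree over the components $E, Y_1,\ldots,Y_k$ is the right move, and the one genuine step you must supply is exactly the one you identify: that each $Y_i$ is a tail, i.e.\ $Y_i\cap\overline{X\setminus Y_i}=Y_i\cap E$ is a single point. Your tree argument goes through once ``dual graph'' is replaced by the bipartite incidence graph whose vertex set is (irreducible components) $\sqcup$ (singular points) with an edge whenever a component passes through a singular point; for a connected multi-nodal curve this graph is a tree if and only if the arithmetic genus is zero, since a multi-node of multiplicity $m$ has $\delta$-invariant $m-1$ and contributes $m$ incidences. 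If $Y_i$ met $E$ at two distinct points $p\neq q$, a path in $E$'s incidence graph from $p$ to $q$ and a path in $Y_i$'s from $q$ to $p$ would be internally disjoint (the $Y_j$ are pairwise disjoint, and interior singular points of $E$ and $Y_i$ are distinct vertices), producing a cycle and contradicting genus zero. Equivalently, one can compute directly that $p_a(E\cup Y_i)=p_a(E)+p_a(Y_i)+r-1=r-1$ where $r=|E\cap Y_i|$, while any connected subcurve of a genus-zero multi-nodal curve has arithmetic genus zero, forcing $r=1$. Either formulation makes your sketch rigorous, and the remainder of the argument (applying Proposition \ref{prop:degreeoftail} to each $Y_i$ and substituting into $d=\deg E+\sum_i\deg Y_i$) is formal and correct.
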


Given an $\F$-curve $F(A_{1}, A_{2}, A_{3}, A_{4})$ as above, Proposition \ref{prop:degreeoftail} says that $\deg(L(A_i))=\sigma(A_i)$ if $\phi(A_{i}, \gamma, A)$ is not an integer.  It follows that the degree of the spine is zero, and hence the $\F$-curve is contracted, if and only if $\sum_{i=1}^4 \sigma (A_i) = d$.  If $\phi(A_{i}, \gamma, A) = k$ is an integer, $\deg(L(A_{i}))$ may be either $k$ or $k+1$.  In this case, both curves are identified in the GIT quotient, and it suffices to consider the case where the legs have the maximum possible total degree.


\section{A new modular interpretation for a particular family of  \quotientnames\ }
\label{s:NewSpace}

In this section, we  study the family  $\Udnga{g+1-\ell}{\frac{\ell-1}{\ell+1}}{(\frac{1}{\ell+1})^{2g+2}}$
of birational models for  $\ovop{M}_{0,n}$, where $n= 2(g+1)$ and $1 \le \ell \le g$.    In Theorem \ref{NewSpace} we give a new modular interpretation of them as certain contractions of Hassett
spaces
$$\tau_{\ell}: \ovop{M}_{0,(\frac{1}{\ell+1}-\epsilon)^n} \longrightarrow \Udnga{g+1-\ell}{\frac{\ell-1}{\ell+1}}{(\frac{1}{\ell+1})^{2g+2}},$$
where so called {\em{even chains}}, described in Definition \ref{even}, are replaced by particular curves.   In order to see that the morphisms $\tau_{\ell}$
exist, we first prove Proposition \ref{prop:GITHassett}, which identifies the Veronese quotient associated to a nearby linearization with the Hassett space $\ovop{M}_{0,(\frac{1}{\ell+1}-\epsilon)^n}$.
The results in this section allow us in \S \ref{s:Connection} to prove that nontrivial conformal block divisors $\mathbb{D}(\sL_2,\ell, \omega_1^{2g+2})$ are pullbacks of ample classes from the  $\Udnga{g+1-\ell}{\frac{\ell-1}{\ell+1}}{(\frac{1}{\ell+1})^{2g+2}}$.

\begin{remark}
\label{TrulyNew}
Because their defining linearizations $(\frac{\ell-1}{\ell+1},(\frac{1}{\ell+1})^{2g+2})$ lie on GIT walls, these Veronese quotients $\Udnga{g+1-\ell}{\frac{\ell-1}{\ell+1}}{(\frac{1}{\ell+1})^{2g+2}}$ admit strictly semistable points, and thus their corresponding
moduli functors are not in fact separated.  The quotient described in Theorem \ref{NewSpace} is not isomorphic to a modular compactification in the sense of \cite{Smyth} (cf. Remark \ref{Smyth}).
In fact, the only known method for constructing this compactification is via GIT.  This highlights the strength of the Veronese quotient construction, as we show here that you can use them to construct ``new'' compactifications of $\op{M}_{0,n}$ --
compactifications that have not been described and cannot be described through any of the previously developed techniques.
\end{remark}

\subsection{Defining the maps $\tau_{\ell}$}In this section we define the morphism
$$ \tau_{\ell}: \ovop{M}_{0, (\frac{1}{\ell+1}-\epsilon)^{2g+2}} \to \Udnga{g+1-\ell}{\frac{\ell-1}{\ell+1}}{(\frac{1}{\ell+1})^{2g+2}}.$$
obtained by variation of GIT.
As mentioned above, each of the linearizations $(\frac{\ell-1}{\ell+1},(\frac{1}{\ell+1})^{2g+2})$ lies on a wall.   To show that $\tau_{\ell}$ exists, we will use the general variation of GIT fact that any quotient corresponding to a linearization in a GIT chamber admits a morphism to a quotient corresponding to a linearization on a wall of that chamber.  Namely, in Proposition \ref{prop:GITHassett} below, we identify the \quotientname\ corresponding to a  GIT chamber that is near the GIT wall that contains the linearization $(\frac{\ell-1}{\ell+1},(\frac{1}{\ell+1})^{2g+2})$.  We then use this to describe the morphism to the \quotientname\ $\Udnga{g+1-\ell}{\frac{\ell-1}{\ell+1}}{(\frac{1}{\ell+1})^{2g+2}}$.
We note that the \quotientname\ discussed in Proposition \ref{prop:GITHassett} is in general not a modular compactification of $\op{M}_{0,n}$ but it is isomorphic to one (unlike the quotients described in Theorem \ref{NewSpace}).

\begin{proposition}\label{prop:GITHassett}
For $2 \le \ell \le g-1$, and $\epsilon>0$ sufficiently small, the Hassett space $\ovop{M}_{0, (\frac{1}{\ell+1}-\epsilon)^{2g+2}}$ is isomorphic to the normalization of the \quotientname\
\[
    \Udnga{g+1-\ell}{\frac{\ell-1}{\ell+1} + \epsilon '}{(\frac{1}{\ell+1} - \epsilon)^{2g+2}}.
\]
Here $\epsilon '$ is a positive number that is uniquely determined by the data $d = g+1-\ell$ and $A = (\frac{1}{\ell+1}-\epsilon)^{2g+2}$ (cf. Definition \ref{allowable}).
\end{proposition}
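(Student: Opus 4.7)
The plan is to invoke Proposition \ref{prop:MorThroughHassett} and reduce the claim to showing that the induced birational morphism $\phi_\gamma$ from the Hassett space to the Veronese quotient contracts no $\F$-curves. Since $\F$-curves span $N_1(\ovop{M}_{0,n})$ and the Hassett space is smooth (hence normal), Zariski's main theorem applied to the resulting quasi-finite birational morphism $\phi_\gamma$ then identifies the Hassett space with the normalization of its image, which is all of the Veronese quotient by properness and birationality.

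First I would verify that $(\gamma, A) = (\tfrac{\ell-1}{\ell+1} + \epsilon', (\tfrac{1}{\ell+1} - \epsilon)^{2g+2})$ is allowable in the sense of Definition \ref{allowable}: the constraint $(d-1)\gamma + \sum_i a_i = d+1$ with $d = g+1-\ell$ is linear in $\epsilon'$ and is uniquely solved by $\epsilon' = \tfrac{2(g+1)}{g-\ell}\epsilon$, which is positive when $\ell < g$. For $\epsilon$ generic and sufficiently small, one can further assume $\phi(J, \gamma, A) \notin \ZZ$ for every nonempty proper $J \subseteq [n]$, placing us in the interior of a GIT chamber and in the setting of Proposition \ref{prop:degreeoftail}. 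Proposition \ref{prop:MorThroughHassett} then produces the factorization $\varphi_{\gamma, A} = \phi_\gamma \circ \rho_A$ with $\rho_A$ the Hassett contraction to $\ovop{M}_{0,(\frac{1}{\ell+1}-\epsilon)^{2g+2}}$.

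By the criterion preceding Proposition \ref{prop:degreeoftail}, an $\F$-curve $F(A_1, A_2, A_3, A_4)$ surviving $\rho_A$ --- equivalently, with $|A_i| \le 2g - \ell$ for every $i$ --- is contracted by $\varphi_{\gamma, A}$ iff $\sum_{i=1}^4 \sigma(A_i) = d = g+1-\ell$. Using Definition \ref{sigma}, $\sigma(A_i) = 0$ for $|A_i| \le \ell+1$; for $\ell+2 \le |A_i| \le 2g-\ell$, substituting the value of $\epsilon'$ and expanding yields
\[
\phi(A_i) \;=\; \frac{|A_i| - \ell - 1}{2} + \frac{(\ell+1)^2 \epsilon}{2(g-\ell)} \bigl(|A_i| - (g+1)\bigr) + O(\epsilon^2),
\]
so the correction to the naive half-integer has the same sign as $|A_i| - (g+1)$. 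Consequently $\sigma(A_i) = \lfloor (|A_i|-\ell)/2 \rfloor$ when $|A_i| \le g$ and $\sigma(A_i) = \lceil (|A_i|-\ell)/2 \rceil$ when $|A_i| \ge g+1$.

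The concluding step is a combinatorial check: let $r$ denote the number of legs with $|A_i| \ge \ell+2$, and note that $\sum_i |A_i| = 2g+2$ forces at most one $|A_i|$ to exceed $g+1$. A case analysis on $r \in \{0, 1, 2, 3, 4\}$, combining the bound $\sigma(A_i) \le \lceil (|A_i| - \ell)/2 \rceil$ with the constraint $\sum_{i \in S} |A_i| \le 2g - 2 + r$ coming from $|A_i| \ge 1$ outside $S$, yields $\sum_i \sigma(A_i) \le g - \ell = d - 1 < d$ in every case, ruling out the equality $\sum \sigma(A_i) = d$. The main obstacle I anticipate is the careful ceiling bookkeeping in the borderline case $|A_i| = g+1$, where the perturbation in $\phi(A_i)$ vanishes identically; this is handled either by genericity of $\epsilon$ when $g-\ell$ is odd, or by invoking the $\pm 1$ flexibility in $\deg(E)$ that Proposition \ref{prop:degreeoftail} permits at integer values of $\phi$, with the maximum-degree convention absorbed into the bound used above.
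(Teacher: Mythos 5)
The paper's own proof does not go through $\F$-curve combinatorics at all: it uses Lemma \ref{modular} to bound the singularity types (nodal when $\ell\geq 3$, at worst triple-nodal when $\ell=2$) and the marked-point collisions (at most one marked point at a node, none at a triple point) permitted in $U^{ss}_{g+1-\ell,2g+2}$, concluding directly that no positive-dimensional family of Hassett-stable curves is collapsed to one GIT orbit, so $\phi_\gamma$ is injective, hence bijective by properness. Your proposal instead computes $\sigma(A_i)$ for every partition surviving $\rho_A$ and shows the spine degree $b = d - \sum\sigma(A_i)$ stays positive. That is a legitimate alternative strategy for showing $\phi_\gamma$ contracts no $\F$-curve, and your expansion of $\phi(J,\gamma,A)$ to first order in $\epsilon$ (with the sign controlled by $|J|-(g+1)$) is correct.

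However, there is a genuine gap in the bridge from ``$\phi_\gamma$ contracts no $\F$-curve'' to ``$\phi_\gamma$ is quasi-finite.'' The fact that $\F$-curve classes span $N_1$ as a \emph{vector space} does not imply that a semi-ample divisor with positive intersection against every $\F$-curve is ample: for that one needs the $\F$-curves to generate the Mori cone $\overline{NE}$ of the Hassett space, which is precisely the (open) $\F$-conjecture that you yourself invoke later in the paper's own Theorem \ref{cnj:sl2conjecture}. Without it, $\phi_\gamma$ could a priori contract an effective curve whose class is not a nonnegative combination of contracted $\F$-curve classes, and Zariski's main theorem gives you nothing. To close this gap you would need either the $\F$-conjecture for $\ovop{M}_{0,(\frac{1}{\ell+1}-\epsilon)^{2g+2}}$, or a stratification/modularity argument showing that any contracted curve lies in a boundary stratum controlled by $\F$-curves---and the latter is essentially the content of the paper's actual argument via Lemma \ref{modular}. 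So while the combinatorial calculation is sound as far as it goes, the proof as stated does not establish the proposition, and the missing piece is exactly where the paper's modular argument does the work.
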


\begin{proof}
By Proposition \ref{prop:MorThroughHassett}, there is a morphism $\phi_{\gamma}:
\ovop{M}_{0, (\frac{1}{\ell+1}-\epsilon)^{2g+2}} \to
\Udnga{g+1-\ell}{\frac{\ell-1}{\ell+1} + \epsilon '}{(\frac{1}{\ell+1} - \epsilon)^{2g+2}}$, which fits into the following commutative diagram:
\[
	\xymatrix{\Mzn \ar[d]_{\rho_{(\frac{1}{\ell+1}-\epsilon)^{2g+2}}} \ar[rd]^{\varphi_{\gamma, (\frac{1}{\ell+1}-\epsilon)^{2g+2}}}\\
	\ovop{M}_{0, (\frac{1}{\ell+1}-\epsilon)^{2g+2}}
	\ar[r]^{\!\!\!\!\!\!\!\!\!\!\!\!\!\!\phi_{\gamma}}&
	\Udnga{g+1-\ell}{\frac{\ell-1}{\ell+1} + \epsilon '}{(\frac{1}{\ell+1} - \epsilon)^{2g+2}}.}
\]
Thus to prove the result, it suffices to show that $\phi_{\gamma}$ is bijective.
Since $(g-\ell)\gamma + (2g+2)(\frac{1}{\ell+1}-\epsilon) = g+2-\ell$,
\[
	\gamma = 1- \frac{2}{\ell+1} + \frac{2(g+1)}{g-\ell}\epsilon.
\]

If $\ell \geq 3$, we have $\gamma > \frac{1}{2}$ and a curve in the semistable locus $U_{g+1-\ell, 2g+2}^{ss}$
does not have multinodal singularities by Lemma \ref{modular}. Similarly, the sum of the weights at a node
cannot exceed $1 - \gamma = \frac{2}{\ell+1} - \frac{2(g+1)}{g-\ell}
\epsilon < 2(\frac{1}{\ell+1} - \epsilon)$.  So at a node, there is at most one marked point.

If $\ell = 2$, $\gamma > \frac{1}{3}$, so a curve in $U_{g+1-\ell, 2g+2}^{ss}$ has at worst a multinodal point of multiplicity $3$.  Note that $1 - (3-1)\gamma = \frac{1}{3} - \frac{4(g+1)}{g-2}\epsilon < \frac{1}{3}-\epsilon$, so by Lemma \ref{modular}, there can be no marked point at a triple point.  Similarly, since $1 - (2-1)\gamma = \frac{2}{3} - \frac{2(g+1)}{g-2}\epsilon < 2(\frac{1}{3}-\epsilon)$, there can be at most one marked point at a node.


To summarize, there is no positive dimensional moduli of curves contracted to the same curve.  In other words, $\phi_{\gamma}$ is an injective map.  The surjectivity comes directly from the properness of both sides.
\end{proof}

\begin{remark}
We make several remarks about Proposition \ref{prop:GITHassett} above.
\begin{enumerate}
	\item In \cite[Theorem 7.1, Corollary 7.2]{GJM}, the authors show that
	for certain values of $\gamma$ and $A$, the corresponding \quotientname\ is $\ovop{M}_{0, A}$.
	Proposition \ref{prop:GITHassett} indicates precise values of $\gamma$
	and $A$ when $A$ is symmetric.
	\item The normalization map of a \quotientname\ is always
	bijective (\cite[Remark 6.2]{GJM}). Thus at least on the level of
	topological
	spaces, the normalization is equal to the \quotientname\
	itself.
	\item If $g \equiv \ell \mbox{ mod } 2$,
	there are strictly semi-stable points on $U_{g+1-\ell, 2g+2}$ for the
	linearization $(\frac{\ell-1}{\ell+1}+\epsilon', (\frac{1}{\ell+1}-
	\epsilon)^{2g+2})$.
	Indeed, for a set $J$ of $g+1$ marked points,
	the weight function
	\[
		\phi(J, \gamma, A) =
		\frac{(g+1)(\frac{1}{\ell+1}-\epsilon)-1}{\frac{2}{\ell+1}
		-\frac{2(g+1)}{g-\ell}\epsilon}
		= \frac{(g-\ell)(g-\ell)-(g-\ell)(g+1)(\ell+1)\epsilon}
		{2(g-\ell)-2(g+1)(\ell+1)\epsilon} = \frac{g-\ell}{2}
	\]
	is an integer.  So the quotient stack $[U_{g+1-\ell,2g+2}^{ss}/\SL(g+2-
	\ell)]$ is not modular in the sense of \cite{Smyth}.
	\item Even if the GIT quotient is modular,
	the moduli theoretic meaning of
	$$\ovop{M}_{0, (\frac{1}{\ell+1}-\epsilon)^{2g+2}} \mbox{ and } \
	\Udnga{g+1-\ell}{\frac{\ell-1}{\ell+1} + \epsilon '}{(\frac{1}{\ell+1} - \epsilon)^{2g+2}}$$ may be
    different in general because on the GIT quotient, multinodal singularities and a
	marked point on a node are allowed.
	But the moduli spaces are nevertheless isomorphic.
	\item The condition on the level is necessary.  Indeed, if $\ell = 1$ or $g$,
	 the GIT quotient is not isomorphic a Hassett space.
\end{enumerate}
\end{remark}

 \subsection{The new modular interpretation}

In this section we will prove Theorem \ref{NewSpace}, which describes the Veronese quotients $\Udnga{g+1-\ell}{\frac{\ell-1}{\ell+1}}{(\frac{1}{\ell+1})^{2g+2}}$ as images of contractions where the so-called {\em{even chains}} in the Hassett
spaces $\ovop{M}_{0,(\frac{1}{\ell+1}-\epsilon)^n}$ are replaced by other curves, described below.

\begin{definition}\label{even}
A curve $(C, x_1 , \ldots , x_{2g+2}) \in \ovop{M}_{0, (\frac{1}{\ell+1}-\epsilon)^{2g+2}}$ is an \emph{\textbf{odd chain}} (resp. \emph{\textbf{even chain}}) if $C$ contains a connected chain $C_1 \cup \cdots \cup C_k$ of rational curves such that:
\begin{enumerate}
	\item each $C_i$ contains exactly two marked points;
	\item each interior component $C_{i}$ for $2 \le i \le k-1$ contains exactly two
	nodes $C_i \cap C_{i-1}$ and $C_i \cap C_{i+1}$;
	\item aside from the two marked points, each of the two end components
	$C_{1}$ and $C_{k}$ contains two ``special'' points, where a special 	
	point is either
	a node or a point at which $\ell+1$ marked points coincide.  
	In the first case, we
	will refer to the connected components of $\overline{C \smallsetminus 
	\cup_{i=1}^{k}C_{i}}$ as ``tails''.
	We will
	regard the second type of special point as a ``tail'' of degree $0$;
	\item the number of marked points on each of tails is odd (resp. even).
\end{enumerate}
\end{definition}

Figure \ref{fig:oddchain} shows two examples of odd chains, when $\ell$ is even.
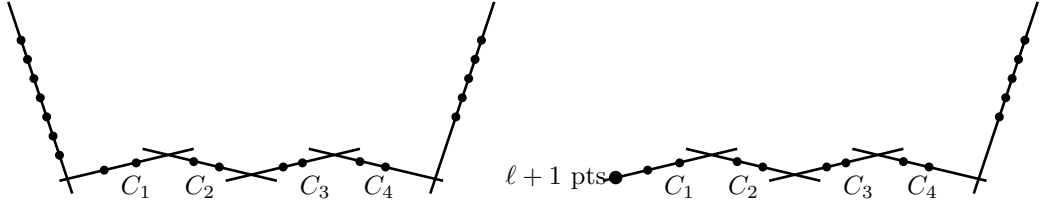
\begin{figure}[!ht]
\begin{center}
\small
\begin{tikzpicture}[scale=0.85]
	\draw[line width=1pt] (0, 10) -- (1, 7);
	\draw[line width=1pt] (0.8, 7.2) -- (2.9, 7.7);
	\draw[line width=1pt] (2.1, 7.7) -- (4.2, 7.2);
	\draw[line width=1pt] (3.4, 7.2) -- (5.5, 7.7);
	\draw[line width=1pt] (4.7, 7.7) -- (6.8, 7.2);
	\draw[line width=1pt] (6.6, 7) -- (7.6, 10);
	\fill (0.2, 9.4) circle (2pt);	
	\fill (0.3, 9.1) circle (2pt);	
	\fill (0.4, 8.8) circle (2pt);	
	\fill (0.5, 8.5) circle (2pt);
	\fill (0.6, 8.2) circle (2pt);	
	\fill (0.7, 7.9) circle (2pt);	
	\fill (0.8, 7.6) circle (2pt);	
	\fill (1.5, 7.366) circle (2pt);
	\fill (2.0, 7.48) circle (2pt);
	\fill (3.3, 7.416) circle (2pt);
	\fill (2.9, 7.5) circle (2pt);
	\fill (4.3, 7.416) circle (2pt);
	\fill (4.6, 7.48) circle (2pt);
	\fill (5.9, 7.416) circle (2pt);
	\fill (5.5, 7.5) circle (2pt);
	\fill (7.4, 9.4) circle (2pt);	
	\fill (7.3, 9.1) circle (2pt);	
	\fill (7.2, 8.8) circle (2pt);	
	\fill (7.1, 8.5) circle (2pt);
	\fill (7.0, 8.2) circle (2pt);
	
	\draw[line width=1pt] (9.3, 7.2) -- (11.4, 7.7);
	\draw[line width=1pt] (10.6, 7.7) -- (12.7, 7.2);
	\draw[line width=1pt] (11.9, 7.2) -- (14.0, 7.7);
	\draw[line width=1pt] (13.2, 7.7) -- (15.3, 7.2);
	\draw[line width=1pt] (15.1, 7) -- (16.1, 10);

	\fill (9.5, 7.25) circle (3pt) node[left] {$\ell +1$ pts};
	\fill (10.0, 7.366) circle (2pt);
	\fill (10.5, 7.48) circle (2pt);
	\fill (11.8, 7.416) circle (2pt);
	\fill (11.4, 7.5) circle (2pt);
	\fill (12.8, 7.416) circle (2pt);
	\fill (13.1, 7.48) circle (2pt);
	\fill (14.4, 7.416) circle (2pt);
	\fill (14.0, 7.5) circle (2pt);
	\fill (15.9, 9.4) circle (2pt);	
	\fill (15.8, 9.1) circle (2pt);	
	\fill (15.7, 8.8) circle (2pt);	
	\fill (15.6, 8.5) circle (2pt);
	\fill (15.5, 8.2) circle (2pt);

	\node (c1) at (2, 7.1) {$C_{1}$};
	\node (c2) at (3, 7.1) {$C_{2}$};
	\node (c3) at (4.8, 7.1) {$C_{3}$};
	\node (c4) at (5.8, 7.1) {$C_{4}$};
	\node (d1) at (10.5,7.1) {$C_{1}$};
	\node (d2) at (11.5, 7.1) {$C_{2}$};
	\node (d3) at (13.3, 7.1) {$C_{3}$};
	\node (d4) at (14.3, 7.1) {$C_{4}$};
\end{tikzpicture}
\normalsize
\end{center}
\caption{Examples of odd chains}\label{fig:oddchain}
\end{figure}

\begin{theorem}
\label{NewSpace}
If $3 \leq \ell \leq g-1$ and $\ell$ is even (resp. odd), then the map $\tau_{\ell}$ restricts to an isomorphism away from the locus of odd chains (resp. even chains).  If $(C, x_1 , \ldots , x_{2g+2} )$ is an odd chain (resp. even chain), then $\tau_{\ell} (C, x_1, \ldots , x_{2g+2} )$ is strictly semistable, and its orbit closure contains a curve where the chain $C_1 \cup \cdots \cup C_k$ has been replaced by a chain $D_1 \cup \cdots \cup D_{k+1}$ with two marked points at each node $D_i \cap D_{i+1}$ (see Figure \ref{fig:contraction}).
\end{theorem}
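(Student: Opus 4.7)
The plan is variation of GIT. By Proposition \ref{prop:GITHassett}, the normalized Veronese quotient at a chamber linearization adjacent to the wall $(\gamma,A)=(\tfrac{\ell-1}{\ell+1},(\tfrac{1}{\ell+1})^{2g+2})$ is the Hassett space $\ovop{M}_{0,(\tfrac{1}{\ell+1}-\epsilon)^{2g+2}}$, and $\tau_{\ell}$ is the induced VGIT morphism from this chamber quotient to the wall quotient. Accordingly, $\tau_{\ell}$ is an isomorphism over the locus whose image in the wall quotient is GIT-stable, and it contracts each strictly semistable orbit to the unique closed orbit in its orbit closure. The proof therefore reduces to two tasks: (i) identify the strictly semistable locus with the locus of odd (resp.\ even) chains, and (ii) identify the closed orbit in each such orbit closure with the $D_1\cup\cdots\cup D_{k+1}$ configuration.

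For (i), a direct computation gives
$$\phi(J,\gamma,A) \;=\; \frac{|J|/(\ell+1)-1}{1-(\ell-1)/(\ell+1)} \;=\; \frac{|J|-(\ell+1)}{2},$$
so $\phi(J,\gamma,A)\in\mathbb{Z}$ precisely when $|J|\equiv\ell+1\pmod 2$, i.e.\ for odd $|J|$ when $\ell$ is even and for even $|J|$ when $\ell$ is odd. By Proposition \ref{prop:degreeoftail} (and its corollary), if a GIT-semistable Veronese curve has no tail whose marked-point count is of this ``bad'' parity, then every component degree is uniquely determined, so the point is actually GIT-stable. Translating this back through $\varphi_{\gamma,A}$, a Hassett curve $(C,x_\bullet)$ can map to a strictly semistable point only if it contains a sub-chain $C_1\cup\cdots\cup C_k$ whose two end-tails have marked-point counts of the bad parity—precisely the odd/even chain condition of Definition \ref{even}. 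Off the chain locus, this stability translates into injectivity of $\tau_\ell$, and combined with properness of both source and target it yields the asserted isomorphism.

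For (ii), given a chain $C_1\cup\cdots\cup C_k$ I exhibit an explicit one-parameter subgroup $\lambda$ of $\SL(g+2-\ell)$ adapted to the chain decomposition, chosen so that its numerical Hilbert--Mumford weight on $\varphi_{\gamma,A}(C)$ vanishes (thereby confirming strict semistability at the wall) and so that $\lim_{t\to 0}\lambda(t)\cdot\varphi_{\gamma,A}(C)$ is obtained by collapsing the two marked points on each $C_i$ onto one of its nodes while simultaneously sprouting a new rational bubble $D_j$ at each node to absorb the two coincident marked points that migrate there. The limit is the chain $D_1\cup\cdots\cup D_{k+1}$ of the statement, balanced in the sense that each new node carries total marked-point weight $2/(\ell+1)$—the maximal value permitted by Lemma \ref{modular}. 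A further numerical Hilbert--Mumford check then shows that no nontrivial one-parameter subgroup further degenerates this limit, so its orbit is closed and it is the unique polystable representative in the orbit closure of $\varphi_{\gamma,A}(C)$.

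The principal difficulty is the construction and analysis of the one-parameter subgroup in (ii), together with the polystability verification for the multi-nodal limit; in particular the case $\ell=3$ (where triple points are allowed) and the range $\ell\geq 4$ (where only nodes occur) require slightly different multiplicity bookkeeping in light of Lemma \ref{modular}. The remaining ingredients—the parity calculation of $\phi$ and the injectivity off the chain locus—are essentially formal consequences of the material reviewed in \S 1.
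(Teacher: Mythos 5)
Your VGIT framing and the parity computation $\phi(J,\gamma,A)=\tfrac{|J|-(\ell+1)}{2}$ agree with what the paper does, but two essential steps in the proposal are sketched rather than proven, and they are precisely the places the paper has to work (or cite).

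For (i), the step from ``there exists $J$ with $\phi(J,\gamma,A)\in\mathbb{Z}$ (so the degree assignment is ambiguous)'' to ``the Hassett curve is an odd (resp.\ even) chain in the sense of Definition \ref{even}'' is not formal. The chain structure is a constraint on the whole dual graph, not a property of a single subset $J$, and the isomorphism claim requires knowing exactly which strata of $\ovop{M}_{0,(\frac{1}{\ell+1}-\epsilon)^{2g+2}}$ are contracted. The paper proves this by a component-by-component analysis: tails are never contracted (a tail with $k$ marked points would need $k/(\ell+1)\le 1$, but then $k(\tfrac{1}{\ell+1}-\epsilon)<1$ contradicts Hassett stability); and since $\gamma=\tfrac{\ell-1}{\ell+1}\ge\tfrac12$, Lemma \ref{modular} forces at worst triple nodes for $\ell=3$ and only nodes for $\ell\ge 4$, with no marked points at triple nodes and at most two at a node, so the only possible contracted component is an interior bridge with two nodes and two marked points. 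Only after this reduction does the parity of $\phi(T_1)=\tfrac{i-\ell-1}{2}$ and $\phi(T_2)=\tfrac{2g-i-\ell-1}{2}$ distinguish the contracted from the uncontracted cases (in the latter case the paper computes $\deg C_i=1$ explicitly). Your ``translating back through $\varphi_{\gamma,A}$'' glosses over this reduction, which is the real content of the isomorphism-off-the-chain-locus claim.

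For (ii), the paper does not construct a one-parameter subgroup. It invokes \cite[Proposition 6.7]{GJM} to identify the distinguished curve in the orbit closure, where $C_i$ is replaced by $D_1\cup D_2$ with two marked points at the node. Your proposal replaces this citation with a from-scratch Hilbert--Mumford analysis, which would be a genuinely different and more self-contained route, but the 1-PS is never actually exhibited: you describe what it should do (collapse the two marked points on each $C_i$ onto a node while sprouting bubbles carrying weight $2/(\ell+1)$) and then acknowledge that constructing it and verifying polystability of the limit is ``the principal difficulty.'' As written, the proposal defers exactly the step the theorem requires, so this is a gap rather than an alternative proof. If you want to pursue this route, you would need to write down $\lambda$ in a basis adapted to the flag of subspaces spanned by the sub-chains, compute the Hilbert--Mumford weight on both the Chow cycle and the marked points, verify it vanishes, identify the flat limit, and then run the polystability check separately for $\ell=3$ (triple nodes permitted) and $\ell\ge 4$ (nodes only) — or else simply cite \cite[Proposition 6.7]{GJM} as the paper does.
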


\begin{figure}[!ht]
\begin{center}
\small
\begin{tikzpicture}[scale=0.85]
	\draw[line width=1pt] (0, 10) -- (1, 7);
	\draw[line width=1pt] (0.8, 7.2) -- (2.9, 7.7);
	\draw[line width=1pt] (2.1, 7.7) -- (4.2, 7.2);
	\draw[line width=1pt] (3.4, 7.2) -- (5.5, 7.7);
	\draw[line width=1pt] (4.7, 7.7) -- (6.8, 7.2);
	\draw[line width=1pt] (6.6, 7) -- (7.6, 10);
	\fill (0.1, 9.7) circle (2pt);
	\fill (0.2, 9.4) circle (2pt);	
	\fill (0.3, 9.1) circle (2pt);	
	\fill (0.4, 8.8) circle (2pt);	
	\fill (0.5, 8.5) circle (2pt);
	\fill (0.6, 8.2) circle (2pt);	
	\fill (0.7, 7.9) circle (2pt);	
	\fill (0.8, 7.6) circle (2pt);	
	\fill (1.5, 7.366) circle (2pt);
	\fill (2.0, 7.48) circle (2pt);
	\fill (3.3, 7.416) circle (2pt);
	\fill (2.9, 7.5) circle (2pt);
	\fill (4.3, 7.416) circle (2pt);
	\fill (4.6, 7.48) circle (2pt);
	\fill (5.9, 7.416) circle (2pt);
	\fill (5.5, 7.5) circle (2pt);
	\fill (7.5, 9.7) circle (2pt);
	\fill (7.4, 9.4) circle (2pt);	
	\fill (7.3, 9.1) circle (2pt);	
	\fill (7.2, 8.8) circle (2pt);	
	\fill (7.1, 8.5) circle (2pt);
	\fill (7.0, 8.2) circle (2pt);
	
	\draw[line width=1pt] (8.5, 10) -- (9.5, 7);
	\draw[line width=1pt] (9.3, 7.2) -- (11.4, 6.7);
	\draw[line width=1pt] (10.6, 6.7) -- (12.7, 7.2);
	\draw[line width=1pt] (11.9, 7.2) -- (14.0, 6.7);
	\draw[line width=1pt] (13.2, 6.7) -- (15.3, 7.2);
	\draw[line width=1pt] (14.5, 7.2) -- (16.6, 6.7);
	\draw[line width=1pt] (16.2, 6.7) -- (17.3, 10);

	\fill (8.6, 9.7) circle (2pt);
	\fill (8.7, 9.4) circle (2pt);	
	\fill (8.8, 9.1) circle (2pt);	
	\fill (8.9, 8.8) circle (2pt);	
	\fill (9.0, 8.5) circle (2pt);
	\fill (9.1, 8.2) circle (2pt);	
	\fill (9.2, 7.9) circle (2pt);	
	\fill (9.3, 7.6) circle (2pt);	
	\fill (17.2, 9.7) circle (2pt);
	\fill (17.1, 9.4) circle (2pt);	
	\fill (17.0, 9.1) circle (2pt);	
	\fill (16.9, 8.8) circle (2pt);	
	\fill (16.8, 8.5) circle (2pt);
	\fill (16.7, 8.2) circle (2pt);

	\fill (11, 6.8) circle (3pt) node[above] {2 pts};	
	\fill (12.3, 7.1) circle (3pt) node[below] {2 pts};	
	\fill (13.6, 6.8) circle (3pt) node[above] {2 pts};	
	\fill (14.9, 7.1) circle (3pt) node[below] {2 pts};	

	\node (t1) at (1, 9) {$T_{1}$};
	\node (t2) at (6.6, 9) {$T_{2}$};
	\node (t3) at (9.5, 9) {$T_{1}$};
	\node (t4) at (16.4, 9) {$T_{2}$};
	\node (c1) at (2, 7.1) {$C_{1}$};
	\node (c2) at (3, 7.1) {$C_{2}$};
	\node (c3) at (4.8, 7.1) {$C_{3}$};
	\node (c4) at (5.8, 7.1) {$C_{4}$};
	\node (a) at (8, 8) {$\Rightarrow$};
\end{tikzpicture}
\normalsize
\end{center}
\caption{The contraction}\label{fig:contraction}
\end{figure}
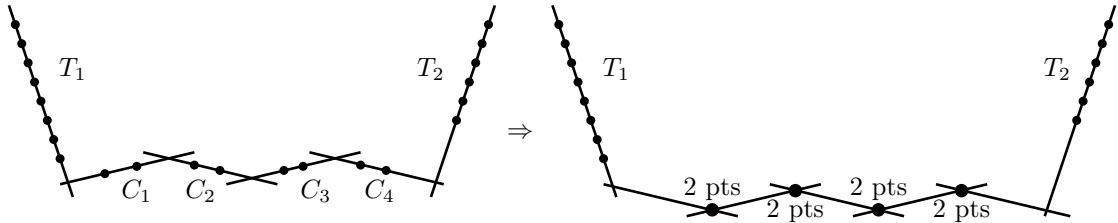

\begin{proof}
Note that both the Hassett space and the GIT quotient are stratified by the topological types of parametrized curves.
Furthermore, $\tau_{\ell}$ is compatible with these stratifications, so $\tau_{\ell}$ contracts a curve $B$ if and only if
\begin{enumerate}
	\item $B$ is in the closure of a stratum,
	\item a general point $(C, x_{1}, \cdots, x_{2g+2})$ of $B$
	has irreducible components $C_1, C_2, \cdots , C_k$
	with positive dimensional moduli,
	\item $C_i$ is contracted by the map from $C$ to $\tau_{\ell}(C, x_{1}, \cdots, x_{2g+2})$ and
	\item the configurations of points on the irreducible components other than the $C_i$'s are fixed.
\end{enumerate}

A component $C_i \subset C \in \ovop{M}_{0,
(\frac{1}{\ell+1}-\epsilon)^{2g+2}}$ has positive dimensional moduli if
it has four or more distinct special points. If a tail with $k$ points is contracted,
then $k (\frac{1}{\ell+1}) \le 1$. But then $k (\frac{1}{\ell+1} - \epsilon) < 1$, so such a tail is impossible on $\ovop{M}_{0, (\frac{1}{\ell+1}-\epsilon)^{2g+2}}$. Thus no tail is contracted.
Now $\gamma = \frac{\ell-1}{\ell+1} \ge \frac{1}{2}$.
By Lemma \ref{modular}, a curve $(D, y_{1}, \cdots, y_{2g+2}) \in
U_{g+1-\ell, 2g+2}^{ss}$ has at worst triplenodes if $\ell = 3$ and
nodes if $\ell \ge 4$. Moreover, the sum of the weights on triplenodes cannot exceed
$1 - 2\gamma = \frac{3-\ell}{\ell+1} \le 0$, so
there are no marked points at a triplenode.
Since $1 - \gamma = \frac{2}{\ell+1}$,
there are at most two marked points at a node. Therefore, the only possible
contracted component is an interior component
$C_i$ with two points of attachment and two marked points.


Now, let $C_i$ be such a component.  Connected to $C_i$ there are two tails $T_{1}$ and $T_{2}$ (not necessarily irreducible), with $i$ and $2g-i$ marked points respectively.  (Here we will regard a point with $\ell+1$ marked points, or equivalently, total weight $1-(\ell+1)\epsilon$ as a `tail' of degree $0$.)
If $i \equiv \ell \mbox{ mod } 2$, then $\phi(T_1) = \frac{i-\ell-1}{2}$ and $\phi(T_2) = \frac{2g-i-\ell-1}{2}$ (see Definition \ref{sigma}), so neither is an integer.  Hence the degree of the component $C_i$ is
\begin{eqnarray*}
	d - (\sigma(T_{1}) + \sigma(T_{2}))
	&=& 	g+1-\ell -(\lceil \frac{i-\ell-1}{2}\rceil
	+ \lceil \frac{2g-i-\ell-1}{2}\rceil) = 1,
\end{eqnarray*}
and thus $C_i$ is not contracted by the map to $\tau_{\ell}(C, x_{1}, \cdots, x_{2g+2})$.
On the other hand, if $i \equiv \ell +1 \mbox{ mod } 2$, then both
$\phi(T_{1}) = \frac{i - \ell -1}{2}$ and $\phi(T_{2}) =
\frac{2g-i-\ell-1}{2}$ are integers.  Therefore, this curve lies in the
strictly semi-stable locus and the image $\tau_{\ell}(C, x_{1}, \cdots, x_{2g+2})$
can be represented by several possible topological types.  By \cite[Proposition 6.7]{GJM}, the orbit closure of $\tau_{\ell}(C, x_{1}, \cdots, x_{2g+2})$ contains a curve in which $C_i$ is replaced by the union of two lines $D_1 \cup D_2$, with two marked points at the node $D_1 \cap D_2$.
\end{proof}

\begin{remark}\label{Smyth}Note that $\tau_{\ell}$ restricts to an isomorphism on the (non-closed) locus of curves consisting of two tails connected by an irreducible bridge with 4 marked points.  But, on the locus of curves consisting of two tails connected by a chain of two bridges with two marked points each, $\tau_{\ell}$ forgets the data of the chain.  The map $\tau_{\ell}$ therefore fails to satisfy axiom (3) of \cite[Definition 1.5]{Smyth}.  In particular, the Veronese quotient described in Theorem \ref{NewSpace}  is not isomorphic to a modular compactification in the sense of \cite{Smyth}.
\end{remark}

\subsection{Morphisms between the moduli spaces we have described}

\begin{corollary}\label{phiell}
If $1 \le \ell \le g-2$, then there is a morphism
\[
	\psi_{\ell, \ell+2}:
	\ovop{M}_{0, (\frac{1}{\ell+1}-\epsilon)^{2g+2}} \to
	\Udnga{g-1-\ell}{\frac{\ell+1}{\ell+3}}{(\frac{1}{\ell+3})^{2g+2}}
\]
preserving the interior.
\end{corollary}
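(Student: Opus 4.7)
The plan is to construct $\psi_{\ell, \ell+2}$ as the composition of a Hassett weight reduction with the morphism to a Veronese quotient supplied by Proposition \ref{prop:MorThroughHassett}. Set $A := (\frac{1}{\ell+1}-\epsilon)^{2g+2}$ for the source weights, and put $A' := (\frac{1}{\ell+3})^{2g+2}$, $\gamma := \frac{\ell+1}{\ell+3}$, and $d := g-1-\ell$. First I would verify that $(\gamma, A')$ is an allowable linearization on $U_{d, 2g+2}$ in the sense of Definition \ref{allowable}: the bounds $\gamma < 1$ and $0 < a_i' < 1$ are immediate, and the balance condition $(d-1)\gamma + \sum_i a_i' = d+1$ reduces to the elementary identity $(g-\ell-2)(\ell+1) + 2(g+1) = (g-\ell)(\ell+3)$, giving $g-\ell$ on both sides.

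Granted allowability, Proposition \ref{prop:MorThroughHassett} supplies a morphism $\phi_\gamma \colon \ovop{M}_{0, A'} \to \Udnga{d}{\gamma}{A'}$ preserving the interior. Because $\frac{1}{\ell+3} \le \frac{1}{\ell+1} - \epsilon$ for $\epsilon$ small, the weight vector $A'$ is dominated entrywise by $A$, so Hassett's theory provides a reduction $\rho_{A,A'}\colon \ovop{M}_{0,A} \to \ovop{M}_{0,A'}$. I would then simply set $\psi_{\ell, \ell+2} := \phi_\gamma \circ \rho_{A,A'}$. Since $\rho_{A,A'}$ restricts to the identity on $\op{M}_{0,2g+2}$ and $\phi_\gamma$ preserves the interior, so does $\psi_{\ell, \ell+2}$.

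The one genuine obstacle is the boundary case $\ell = g-2$, where $\sum_i a_i' = \frac{2g+2}{\ell+3} = 2$ and the intermediate Hassett space $\ovop{M}_{0,A'}$ is degenerate (Hassett stability demands strictly greater total weight). In that case I would bypass the intermediate space and instead show that the morphism $\varphi_{\gamma, A'}\colon \ovop{M}_{0,2g+2} \to \Udnga{d}{\gamma}{A'}$ descends directly through $\rho_A\colon \ovop{M}_{0,2g+2} \to \ovop{M}_{0,A}$. Since $\rho_A$ is a birational contraction with connected fibers, by rigidity it suffices to check that every F-curve contracted by $\rho_A$ is also contracted by $\varphi_{\gamma, A'}$. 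This is a short computation with the function $\sigma$ of Definition \ref{sigma}: an F-curve $F(A_1, A_2, A_3, A_4)$ contracted by $\rho_A$ has some leg $L(A_i)$ whose total $A$-weight exceeds $\sum_j a_j - 1$, which forces $|A_i|$ large enough that $\sigma(A_i) = d$ relative to $(\gamma, A')$, so $\sum_i \sigma(A_i) = d$ and the F-curve is contracted by $\varphi_{\gamma, A'}$ as well. This bookkeeping at the boundary is where I expect to spend the most care, but no new ideas beyond the machinery of \S 1.2 are required.
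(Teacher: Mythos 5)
Your proposal is correct, and for $1 \le \ell \le g-3$ it is essentially the paper's proof: both compose a Hassett weight-reduction with a map to the Veronese quotient. The only cosmetic difference is that you route through the intermediate Hassett space $\ovop{M}_{0, (\frac{1}{\ell+3})^{2g+2}}$ and invoke the general factorization $\phi_\gamma$ of Proposition~\ref{prop:MorThroughHassett} directly, whereas the paper routes through $\ovop{M}_{0, (\frac{1}{\ell+3}-\epsilon)^{2g+2}}$ and uses the chamber-to-wall morphism $\tau_{\ell+2}$ that it had already constructed via Proposition~\ref{prop:GITHassett}; your allowability check $(g-\ell-2)(\ell+1)+2(g+1)=(g-\ell)(\ell+3)$ is exactly what guarantees the needed balance condition, and since both constructions restrict to the identity on $\op{M}_{0,n}$ and the target is separated, they agree. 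Where you genuinely diverge is the boundary case $\ell = g-2$: the paper simply observes that the relevant Veronese quotient is $(\PP^{1})^{2g+2}\git \SL(2)$ with symmetric weights and quotes Hassett's Theorem~8.3 to produce the map from any symmetric Hassett space, while you propose a rigidity argument showing that every $\F$-curve contracted by $\rho_A$ (equivalently, with $|A_4|\ge g+3$) has $\sigma(A_4)=d=1$ under $(\gamma, A')$ and so is contracted by $\varphi_{\gamma,A'}$. Your route works and stays inside the paper's toolkit, but it needs the extra remark that the $\F$-curves contracted by $\rho_A$ span the contracted face (so that contracting them really does force the factorization through $\rho_A$ by the rigidity lemma); the paper's citation of Hassett is more direct and sidesteps that point entirely.
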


\begin{proof}
For $1 \le \ell \le g-3$, we consider the composition
\[	\ovop{M}_{0, (\frac{1}{\ell+1}-\epsilon)^{2g+2}}
	\to \ovop{M}_{0, (\frac{1}{\ell+3}-\epsilon)^{2g+2}}
	\to \Udnga{g-1-\ell}{\frac{\ell+1}{\ell+3}}{(\frac{1}{\ell+3})^{2g+2}},
\]
where the first morphism is Hassett's reduction morphism \cite[Theorem 4.1]{HassettWeighted} and the last morphism is $\tau_{\ell+2}$.

If $\ell = g-2$, then
$\Udnga{g-1-\ell}{\frac{\ell+1}{\ell+3} + \epsilon '}{(\frac{1}{\ell+3} - \epsilon)^{2g+2}}
= (\PP^{1})^{2g+2}\git \SL(2)$ with symmetric weight datum.  Because there is a morphism
$\ovop{M}_{0, A} \to (\PP^{1})^{2g+2}\git \SL(2)$ for any symmetric weight datum $A$ (\cite[Theorem 8.3]{HassettWeighted}), we obtain $\psi_{g-2, g}$.
\end{proof}

To obtain morphisms between the moduli spaces described in Theorem \ref{NewSpace}, we consider the following diagram.

\begin{displaymath}
	\xymatrix{& \Mzn \ar[ld] \ar[rd] \\
	\ovop{M}_{0, (\frac{1}{\ell+1}-\epsilon)^{2g+2}}
	\ar[rr]\ar[d]^{\cong} \ar[rrdd]^{\psi_{\ell, \ell+2}}
	\ar@/_4pc/[dd]_{\tau_{\ell}}
	&&
	\ovop{M}_{0, (\frac{1}{\ell+3}-\epsilon)^{2g+2}}\ar[d]_{\cong}
	\ar@/^4pc/[dd]^{\tau_{\ell+2}}\\
	\Udnga{g+1-\ell}{\frac{\ell-1}{\ell+1} + \epsilon '}{(\frac{1}{\ell+1} - \epsilon)^{2g+2}} \ar[d]
	&&
	\Udnga{g-1-\ell}{\frac{\ell+1}{\ell+3} + \epsilon '}{(\frac{1}{\ell+3} - \epsilon)^{2g+2}} \ar[d]\\
	\Udnga{g+1-\ell}{\frac{\ell-1}{\ell+1}}{(\frac{1}{\ell+1})^{2g+2}} \ar@{-->}[rr] &&
	\Udnga{g-1-\ell}{\frac{\ell+1}{\ell+3}}{(\frac{1}{\ell+3})^{2g+2}}}
\end{displaymath}

\begin{proposition}\label{prop:morphism}
If $3 \le \ell \le g-2$, then the morphism $\psi_{\ell, \ell+2}$ factors through $\tau_{\ell}$.
\end{proposition}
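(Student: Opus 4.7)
The plan is to invoke the rigidity lemma: $\tau_\ell$ is a proper surjective morphism with connected fibers from a normal projective variety to (the normalization of, if needed) the Veronese quotient, so $\psi_{\ell,\ell+2}$ will factor through $\tau_\ell$ provided every irreducible curve contracted by $\tau_\ell$ is also contracted by $\psi_{\ell,\ell+2}$. I would therefore classify the curves contracted by $\tau_\ell$ and verify that $\psi_{\ell,\ell+2}=\tau_{\ell+2}\circ \rho$ kills each one, where $\rho:\ovop{M}_{0,(\frac{1}{\ell+1}-\epsilon)^{2g+2}}\to \ovop{M}_{0,(\frac{1}{\ell+3}-\epsilon)^{2g+2}}$ is Hassett's reduction.

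By Theorem \ref{NewSpace} and the analysis in its proof, $\tau_\ell$ is an isomorphism away from the odd (resp.\ even) chain locus, and any contracted irreducible curve $B$ arises by fixing the dual graph of such a chain and varying the cross-ratio of the four special points on a single interior component $C_i$ carrying two marked points and two nodes, where the index $i$ (the number of marked points on one side of $C_i$) satisfies $i\equiv \ell+1\pmod 2$. I would then track $B$ through $\rho$. The reduction $\rho$ contracts exactly those real tails whose marking count drops below the stability threshold in the target, namely tails of size $\ell+2$ or $\ell+3$. The parity hypothesis on an odd/even chain already forbids $t_i=\ell+2$, and a tail with $t_i=\ell+3$ is collapsed by $\rho$ to a single smooth point supporting $\ell+3=(\ell+2)+1$ coincident marked points, which is precisely a degree $0$ tail in the sense of Definition \ref{even} at level $\ell+2$. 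The interior chain components, and in particular $C_i$ with its varying cross-ratio, are untouched by $\rho$, so $\rho(B)$ lies in the locus of odd (resp.\ even) chains for level $\ell+2$ and continues to parametrize the same cross-ratio variation.

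The conclusion then reduces to a parity computation via Definition \ref{sigma}: the integrality of $\phi(T_1,\gamma,A)=\tfrac{i-\ell-1}{2}$ that placed $C_i$ in the strictly semistable locus for $\tau_\ell$ is equivalent to the integrality of $\phi(T_1,\gamma',A')=\tfrac{i-\ell-3}{2}$ for $\tau_{\ell+2}$, both being the condition $i\equiv \ell+1\pmod 2$. Hence $C_i$ is again strictly semistable for the $\tau_{\ell+2}$-linearization, and Theorem \ref{NewSpace} applied at level $\ell+2$ contracts the cross-ratio moduli of $C_i$. Therefore $\tau_{\ell+2}(\rho(B))$ is a point, so $\psi_{\ell,\ell+2}(B)$ is a point, and the rigidity lemma delivers the required factorization. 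I expect the main bookkeeping obstacle to be the case analysis on the end components of the chain under $\rho$ (real tails versus degree $0$ tails at each end, and whether a contracted real tail fuses smoothly into the neighboring chain component), but every sub-case reduces to the single parity equality above.
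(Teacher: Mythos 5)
Your argument for $\ell < g-2$ is essentially the paper's own: invoke the rigidity lemma, use the classification of contracted curves from Theorem \ref{NewSpace}, and observe the parity identity $i \equiv \ell+1 \pmod 2 \iff i \equiv (\ell+2)+1 \pmod 2$. The extra bookkeeping you include about how $\rho$ sends real tails of size $\ell+3$ to degree-$0$ tails at level $\ell+2$, and leaves the varying interior component $C_i$ alone, is a fleshed-out version of what the paper leaves implicit, and it is correct in spirit.

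However, there is a genuine gap: your proof assumes throughout that $\psi_{\ell,\ell+2}=\tau_{\ell+2}\circ\rho$, and that Theorem \ref{NewSpace} (applied at level $\ell+2$) describes the curves $\tau_{\ell+2}$ contracts. Both of these fail when $\ell=g-2$, which is still in the range of the Proposition. In that case $\ell+2=g$, but $\tau_g$ does not fall under Theorem \ref{NewSpace} (which requires $3\le\ell\le g-1$) nor under Proposition \ref{prop:GITHassett} (which requires $2\le\ell\le g-1$); indeed the paper remarks that for $\ell=g$ the Veronese quotient is not a Hassett space. Correspondingly, Corollary \ref{phiell} defines $\psi_{g-2,g}$ \emph{not} as $\tau_g\circ\rho$ but directly as Hassett's reduction $\ovop{M}_{0,(\frac{1}{g-1}-\epsilon)^{2g+2}}\to(\PP^1)^{2g+2}\git\SL(2)$. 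The paper therefore treats $\ell=g-2$ separately: the contracted curves (chains of length $1$ or $2$) must be checked by hand to collapse to the isolated strictly-semistable point of $(\PP^1)^{2g+2}\git\SL(2)$. You would need to add this case to complete the proof; the parity identity alone does not cover it.
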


\begin{proof}
By the rigidity lemma (\cite[Definition-Lemma 1.0]{KeelAnnals}),
it suffices to show that for any curve $B \subset \ovop{M}_{0,
(\frac{1}{\ell+1}-\epsilon)^{2g+2}}$ contracted by $\tau_{\ell}$,
the morphism $\psi_{\ell, \ell+2}$ is constant.  We have already described the curves contracted by $\tau_{\ell}$ in the proof of Theorem \ref{NewSpace}, so it suffices to show that the same curves $B$
are contracted by $\psi_{\ell, \ell+2}$.

When $\ell < g - 2$, $i \equiv \ell +1 \mbox { mod } 2$ if and only if $i \equiv (\ell + 2) + 1 \mbox{ mod } 2$.
So the image of $B$ is contracted by
\[
	\tau_{\ell+2} : \ovop{M}_{0, (\frac{1}{\ell+3}-\epsilon)^{2g+2}}
	\to \Udnga{g-1-\ell}{\frac{\ell+1}{\ell+3}}{(\frac{1}{\ell+3})^{2g+2}}.
\]

If $\ell = g - 2$, then $\psi_{g-2, g}$ is Hassett's reduction morphism
\[
	\ovop{M}_{0, (\frac{1}{g-1}-\epsilon)^{2g+2}} \to
	(\mathbb{P}^{1})^{2g+2}\git SL(2).
\]
In this case there are two types of odd/even chains (of length $1$ or $2$).
It is straightforward to check that these curves
contracted to an isolated singular point of $(\mathbb{P}^{1})^{2g+2}\git SL(2)$ parameterizing strictly semi-stable curves.
\end{proof}

\section{The Veronese quotient divisors $\Vqd{D}_{\gamma,A}$}\label{wpvc}

The main result of this section is Theorem \ref{thm:intersectionFormula}, in which we give a formula for the intersection of $\mathcal{D}_{\gamma, A}$ (explained in Definition \ref{def:DgammaA}) with $\F$-curves on $\ovop{M}_{0,n}$ (described in Definition \ref{FCurve}). As an example of Theorem \ref{thm:intersectionFormula}, in Section \ref{IntForm}, we write down a simple formula for the intersection of the particular divisors
$\mathcal{D}_{\frac{\ell-1}{\ell+1}, (\frac{1}{\ell+1})^{2g+2}}$  with a basis of $\F$-curves.
As a second application of Theorem \ref{thm:intersectionFormula}, in Corollary \ref{class}, we write
down the class of $\mathcal{D}_{\gamma, A}$ in the case that $A$ is $\op{S}_n$-invariant.  Note that we have already described a criterion for determining when these numbers are zero at the end of \S 1.  To compute these numbers in the non-zero case is substantially more complicated.

Let $[n] = A_1 \sqcup A_2 \sqcup A_3 \sqcup A_4$ be a partition and let $F(A_{1}, A_{2}, A_{3}, A_{4})$ be the corresponding $\F$-curve (cf. Definition \ref{FCurve}).   Recall that $\sigma(A_{i})$ is the degree of the leg $L(A_i)$ (cf. Definition \ref{sigma}).  In this section, we establish the following explicit formula for the intersection of  $\Vqd{D}_{\gamma,A}$ and $F(A_{1}, A_{2}, A_{3}, A_{4})$.
\begin{theorem}\label{thm:intersectionFormula}
Given an allowable linearization $(\gamma, A)$ with $d \ge 2$, and an $\F$-curve
$F(A_{1}, A_{2}, A_{3}, A_{4})$:
\begin{eqnarray*}
	F(A_{1}, A_{2}, A_{3}, A_{4}) \cdot \mathcal{D}_{\gamma,A} &=& (\sum_{i=1}^{3}c_{i4}^{2})\frac{w}{2d} +
	(w_{A_{4}} - \frac{w}{d}\sigma(A_{4}))b\\
	&& + \sum_{i=1}^{3}(\frac{w}{d}(\sigma(A_{i})+\sigma(A_{4}))
	-w_{A_{i}}-w_{A_{4}})c_{i4}\\
	&&- \frac{1+\gamma}{2d}(\sum_{i=1}^{4}\sigma(A_{i})
	(d- \sigma(A_{i}))
	- \sum_{i=1}^{3}\sigma(A_{i} \cup A_{4})
	(d-\sigma(A_{i} \cup A_{4})))
\end{eqnarray*}
where $$c_{ij} := d - \sigma ( A_i ) - \sigma ( A_j ) - \sigma ( [n] \backslash
	( A_i \cup A_j ))
	= \sigma ( A_i \cup A_j ) - \sigma ( A_i ) - \sigma ( A_j ),$$
\[
	b = d - \sum_{i=1}^{4}\sigma(A_{i}),
\]
and
$$w = \sum_{i=1}^{n} a_i, \ \mbox{ } \ w_{A_{j}}=\sum_{i \in A_{j}}a_{i}.$$
\end{theorem}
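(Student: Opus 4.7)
The plan is to interpret $F(A_{1},A_{2},A_{3},A_{4}) \cdot \mathcal{D}_{\gamma,A}$ as the degree of the natural polarization $\overline{L}$ on the image of the F-curve in the Veronese quotient, and to compute this by lifting to a family of pointed Veronese curves $\pi: \mathcal{X} \to F \cong \mathbb{P}^{1}$ covering the F-curve. Since on $U_{d,n}$ we have the decomposition $L = \mathcal{O}_{Chow}(\gamma) \otimes \bigotimes_{i=1}^{n} \mathcal{O}_{\mathbb{P}^{d}}(a_{i})$, the intersection splits as
\[
F \cdot \mathcal{D}_{\gamma,A} \;=\; \gamma \cdot \deg(\mathrm{Ch}|_{F}) \;+\; \sum_{i=1}^{n} a_{i}\cdot\deg(p_{i}|_{F}),
\]
where $\deg(\mathrm{Ch}|_F)$ is the degree of the Chow form of the family and $p_{i}:F \to \mathbb{P}^{d}$ is evaluation at the $i$-th marked section. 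The strategy is to use the heaviest leg $L(A_{4})$ as a reference and reorganize all contributions accordingly.

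By Proposition~\ref{prop:MorThroughHassett} and Proposition~\ref{prop:degreeoftail}, a general member of $F$ maps to a reducible degree-$d$ curve in $\mathbb{P}^{d}$ with four rigid legs $L(A_{j})$ of degrees $\sigma(A_{j})$ attached to a varying spine of degree $b = d - \sum_{j}\sigma(A_{j})$. At each of the three boundary points of $\overline{M}_{0,4}$, the spine degenerates and two legs $i,j$ fuse into a super-component of degree $\sigma(A_{i}\cup A_{j}) = \sigma(A_{i})+\sigma(A_{j})+c_{ij}$, with $c_{ij}+c_{kl}=b$ for $\{i,j,k,l\}=\{1,2,3,4\}$ recording how the spine degree distributes. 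Normalizing the $\SL(d+1)$-action so that $L(A_{4})$ is held fixed across the family makes each $p_{i}|_{F}$ with $i \in A_{4}$ constant (contributing $0$); for $i \in A_{j}$ with $j \neq 4$, one computes $\deg(p_{i}|_{F})$ in terms of $\sigma(A_{j}), \sigma(A_{4})$, and $c_{j4}$, which accounts for the $b$-, $w_{A_{j}}$-, and $c_{j4}$-terms in the formula. The Chow contribution $\deg(\mathrm{Ch}|_{F})$ is then a self-intersection computation on $\mathcal{X}$: the relative hyperplane class squared, corrected by the three special fibers, produces the $\sigma(A_{i})(d-\sigma(A_{i}))$ and $\sigma(A_{i}\cup A_{4})(d-\sigma(A_{i}\cup A_{4}))$ quadratic contributions; combining the Chow and evaluation parts assembles the overall coefficient $(1+\gamma)/(2d)$ seen in the last line.

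The main obstacle is the treatment of strictly semistable configurations. When $\phi(J,\gamma,A) \in \mathbb{Z}$ for some $J$, the leg degree $\sigma(A_{i})$ is not individually pinned down by the GIT quotient, and several topological types are identified by $\varphi_{\gamma,A}$. My plan is to first prove the formula under the generic hypothesis that $\phi(J,\gamma,A) \notin \mathbb{Z}$ for all $J$, where Proposition~\ref{prop:degreeoftail} applies cleanly, and then to extend by a continuity argument: the intersection number is linear in the weights $(\gamma,A)$ within an open GIT chamber, while the right-hand side, read via Definition~\ref{sigma} (using the ceiling, i.e.\ the maximal-degree convention compatible with the corollary to Proposition~\ref{prop:degreeoftail}), takes the correct value on GIT walls.
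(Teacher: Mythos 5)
Your high-level strategy matches the paper's: pull the intersection back to $U_{d,n}$ via a rational lift of the image curve, split $L$ into the Chow and evaluation pieces, and compute each. But the proposal has a genuine gap at the step that actually produces the final formula.

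The gap is that you treat $F \cdot \mathcal{D}_{\gamma,A}$ as if it equals $\widetilde{C} \cdot L$ for a suitably normalized lift $\widetilde{C}$. It does not: because a section of $\overline{L}$ pulls back to a section of $L$ vanishing on the unstable locus, the correct identity is $C \cdot \overline{L} = \widetilde{C} \cdot (L - \sum t_i E_i)$ over the irreducible unstable divisors $E_i$, and in the paper's construction the only relevant one is the divisor $D_{deg}$ of configurations lying in a hyperplane, with unknown coefficient $t > 0$. Your write-up never introduces this correction. As a consequence you misattribute the quadratic terms $\sum \sigma(A_i)(d - \sigma(A_i)) - \sum \sigma(A_i \cup A_4)(d - \sigma(A_i \cup A_4))$ to ``the relative hyperplane class squared, corrected by the three special fibers'' in the Chow computation; in the paper these terms come entirely from the pushforward of the Coskun--Harris--Starr formula $D_{deg} = \frac{1}{2d}\big((d+1)H - \sum k(d-k)D_k\big)$, and the coefficient $\frac{1+\gamma}{2d}$ is obtained by solving for $t$. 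Specifically, the paper's lift carries a free integer $a \gg 0$ (needed for base-point-freeness and to ensure the lift meets no unstable divisor other than $D_{deg}$), the raw intersection $\widetilde{C} \cdot L$ is \emph{not} independent of $a$, and requiring $\widetilde{C} \cdot (L - t D_{deg})$ to be $a$-independent forces $t = 1 + \gamma$. Without this mechanism your computation would depend on the choice of lift and could not reproduce the stated formula.

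A secondary concern: your normalization ``hold $L(A_4)$ fixed so that $p_i|_F$ is constant for $i \in A_4$'' corresponds to choosing the small value $a = \sum_{k} c_{k4} - b$ in the paper's pencil $H(a) = aF + bS - \sum c_{i4}E_i$, which need not satisfy the nefness inequalities $a \ge c_{i4}$ of Lemma~\ref{H(a)bpf}. So that specific lift may fail to be base-point-free or may meet additional unstable divisors, and the claimed simplification (sections over $A_4$ contribute zero) is not available in general. Once the $D_{deg}$ correction is in place you are free to take $a$ large, which is both necessary for the construction and harmless for the final answer. The continuity argument for passing from generic linearizations to GIT walls is fine and is essentially the paper's remark that the answer is independent of the semistable degree distribution.
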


Note that the case of $d = 1$ was studied previously in \cite[Section 2]{AlexeevSwinarski}.  If there is an $A_{i}$ such that $\phi(A_{i}, \gamma, A)$ is an integer, then the $\sigma$ function does not give a unique degree for each leg.  But the result of Theorem \ref{thm:intersectionFormula} is nevertheless independent of the choice of semistable degree distribution.

As an example for how simple this formula can be, consider the following.

\begin{corollary}\label{Jensen}For $n=2g+2$, and $1 \le \ell \le g$,
\begin{displaymath}
F_{n-i-2,i,1,1} \cdot \mathcal{D}_{\frac{\ell-1}{\ell+1}, (\frac{1}{\ell+1})^{2g+2}} = \left\{
\begin{array}{ll}
\frac{1}{\ell+1} & \mbox{ if $i \equiv \ell \pmod{2}$ and $i \geq \ell$,}\\
0 & \mbox{ otherwise.}
\end{array}
\right.
\end{displaymath}
\end{corollary}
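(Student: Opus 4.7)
The plan is to apply Theorem \ref{thm:intersectionFormula} directly with the given data $d = g+1-\ell$, $\gamma = \frac{\ell-1}{\ell+1}$, and uniform weights $a_j = \frac{1}{\ell+1}$. First I would collect the numerical constants: $w = \frac{2(g+1)}{\ell+1}$, $w_{A_j} = \frac{|A_j|}{\ell+1}$, $\frac{1+\gamma}{2d} = \frac{\ell}{(\ell+1)(g+1-\ell)}$, and the weight function simplifies to $\phi(J,\gamma,A) = \frac{|J|-\ell-1}{2}$, which is an integer exactly when $|J| \equiv \ell+1 \pmod 2$. Because $|A_3| = |A_4| = 1$, one has $\sigma(A_3) = \sigma(A_4) = 0$ and (for $\ell \ge 2$) also $c_{34} = \sigma(A_3 \cup A_4) = 0$.

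The combinatorial heart of the argument is that $|A_1| + |A_2| = 2g$ is even, so $|A_1|$ and $|A_2|$ share the parity of $i$, whereas $|A_j \cup A_4|$ flips parity; the $\sigma$-function thus switches roles across the $\cup A_4$ operation depending on $i \pmod 2$. By the symmetry $F_{n-i-2, i, 1, 1} = F_{i, n-i-2, 1, 1}$, I may assume $i \le g$.

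I would then case-split. When $i \equiv \ell \pmod 2$ and $\ell \le i \le 2g-\ell$, both $\phi(A_j)$ are half-integers, giving $\sigma_j = \tfrac{|A_j|-\ell}{2}$ and hence $\sigma_1 + \sigma_2 = g-\ell$, $b = 1$; meanwhile $\phi(A_j \cup A_4)$ is an integer equal to $\sigma_j$, so $c_{14} = c_{24} = 0$. In the formula of Theorem \ref{thm:intersectionFormula}, the first and third summands vanish, the fourth vanishes because $\sigma_j(d-\sigma_j) = \sigma(A_j \cup A_4)(d - \sigma(A_j \cup A_4))$ cancels pairwise, and only $w_{A_4} \cdot b = \tfrac{1}{\ell+1}$ survives. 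When $i \not\equiv \ell \pmod 2$ in the middle range, the roles reverse: $\sigma_j = \phi(A_j)$ is an integer, $\sigma(A_j \cup A_4) = \sigma_j + 1$, $c_{14} = c_{24} = 1$, $b = 2$, and direct substitution produces four contributions $\tfrac{2(g+1)}{(\ell+1)(g+1-\ell)}$, $\tfrac{2}{\ell+1}$, $-\tfrac{4(g+1)}{(\ell+1)(g+1-\ell)}$, $\tfrac{2\ell}{(\ell+1)(g+1-\ell)}$ which sum to zero over a common denominator. When $i < \ell$, the leg $A_1$ is heavy enough that the $F$-curve is already contracted by the Hassett morphism $\rho_A$, and direct substitution confirms that every term of the formula vanishes.

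The main obstacle will be the boundary cases where some $|J| \in \{|A_1|, |A_2|, |A_1\cup A_4|, |A_2 \cup A_4|\}$ hits exactly $\ell+1$ or $2g+1-\ell$, placing $a_J$ on a GIT wall and making the value of $\sigma$ in Definition \ref{sigma} formally ambiguous; these are handled by the remark immediately after Theorem \ref{thm:intersectionFormula} that its value is independent of the choice of semistable degree distribution, so any consistent convention (e.g.\ taking $\sigma = 0$ whenever $a_J \le 1$ and $\sigma = d$ whenever $a_J \ge a_{[n]} - 1$) yields the claimed value. The edge cases $\ell = 1$ (where the analysis of $c_{34}$ requires separate care) and $\ell = g$ (where $d = 1$ falls outside the hypothesis of Theorem \ref{thm:intersectionFormula}) would be handled by appealing to \cite{ags} and to the classical $(\mathbb{P}^1)^n \git \SL(2)$ intersection calculus of \cite[Section 2]{AlexeevSwinarski}, respectively.
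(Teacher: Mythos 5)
The paper states Corollary~\ref{Jensen} without proof (it is offered as an illustration of the simplicity of Theorem~\ref{thm:intersectionFormula}), so there is no authorial argument to compare against; your proposal supplies the missing verification by the direct specialization that the authors clearly have in mind. I checked the arithmetic: with $\phi(J)=\frac{|J|-\ell-1}{2}$, the values $\sigma_j=\frac{|A_j|-\ell}{2}$, $b=1$, $c_{j4}=0$ in the nonzero case do collapse the formula to $w_{A_4}\cdot b=\frac{1}{\ell+1}$, and in the opposite-parity case $\sigma_j=\frac{|A_j|-\ell-1}{2}$, $b=2$, $c_{14}=c_{24}=1$ produce the four contributions you list, which cancel over the common denominator $(\ell+1)(g+1-\ell)$; and for $i<\ell$ the leg $A_1$ is Hassett-heavy so the curve is contracted by $\rho_A$, consistent with the formula giving~$0$. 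The only places deserving slightly more care, which you flag correctly, are (a)~the GIT walls where $\phi(A_j\cup A_4)\in\ZZ$ or $a_{A_3\cup A_4}=1$, handled by the independence-of-choice remark after Theorem~\ref{thm:intersectionFormula} (I spot-checked that switching $\sigma(A_j\cup A_4)$ from $\sigma_j$ to $\sigma_j+1$ does not change the total), and (b)~the endpoint $\ell=g$ where $d=1$ falls outside the stated hypothesis of Theorem~\ref{thm:intersectionFormula}; your suggestion of falling back on the $d=1$ calculus of~\cite{AlexeevSwinarski} is the right fix, though it would strengthen the writeup to actually carry out that one-line computation (all $\sigma$'s and $c$'s vanish and one gets $w_{A_4}\cdot b=\frac{1}{g+1}$) rather than leave it as an appeal. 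Overall the proof is correct and takes the intended route.
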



Before delving into the proof of Theorem \ref{thm:intersectionFormula} (in Section \ref{proofHBF}), we first explain our approach (in Section \ref{approachHBF}), and develop a tool we will use (in Section \ref{curveliftHBF}), which is a rational lift to $U_{d,n}$ of the image $C$ in $\Udnga{d}{\gamma}{A}$ of a given $\F$-curve.

\subsection{Approach}\label{approachHBF}
Let $C$ be the image of $F(A_{1}, A_{2}, A_{3}, A_{4})$ in $\Udnga{d}{\gamma}{A}$ under the map $\varphi_{\gamma,A}$.  Let $L = \cO(\gamma, A)$ be an allowable polarization on $U_{d,n}$, and let $\overline{L} = L\git_{\gamma,A} \SL(d+1)$ be the associated ample line bundle on $\Udnga{d}{\gamma}{A}$.  By the projection formula, $F(A_{1},A_{2},A_{3},A_{4})\cdot \mathcal{D}_{\gamma,A} = C \cdot \overline{L}$, so to prove Theorem \ref{thm:intersectionFormula} we need to compute $C \cdot \overline{L}$.  To do this, we will lift $C$ to an appropriate curve $\widetilde{C}$ on $U_{d,n}$ and do the intersection there.

\begin{definition}
Let $C$ be a curve in $\Udnga{d}{\gamma}{A}$, and let $\pi : U_{d,n}^{ss} \to \Udnga{d}{\gamma}{A}$ be the quotient map.  A \emph{\textbf{rational lift}} of $C$ to $U_{d,n}$ is a curve $\widetilde{C}$ in $U_{d,n}$ such that
\begin{itemize}
	\item a general point of $\widetilde{C}$ lies in $U_{d,n}^{ss}$;
	\item $\overline{\pi(\widetilde{C})} = C$ and
	$\pi|_{\widetilde{C}} : \widetilde{C} \dashrightarrow C$ is degree 1.
\end{itemize}
\end{definition}

A section of $\overline{L}$ can be pulled-back to a section of $L$ that vanishes on the unstable locus.  It follows that if we have a rational lifting $\widetilde{C}$, then by the projection formula we have
\[
	C \cdot \overline{L} = \widetilde{C} \cdot (L - \sum t_{i}E_{i})
\]
for some rational numbers $t_i > 0$, where the sum is taken over all irreducible unstable divisors.  By the proof of \cite[Proposition 4.6]{GJM}, there are two types of unstable divisors.  One is a divisor of curves with unstable degree distribution and the other is $D_{deg}$, the divisor of curves contained in a hyperplane.  If $\widetilde{C}$ intersects $D_{deg}$ only among unstable divisors, then $C \cdot \overline{L} = \widetilde{C} \cdot (L - t D_{deg})$ for some $t > 0$.

\subsection{An explicit rational lift}\label{curveliftHBF}     In this section we will construct a rational lift $\widetilde{C}$ to $U_{d,n}$ of the image $C$ in $\Udnga{d}{\gamma}{A}$ of an $\F$-curve $F(A_1,A_2,A_3,A_4)$ in $\ovop{M}_{0,n}$.  This lift $\widetilde{C}$ will be used to prove Theorem \ref{thm:intersectionFormula} in Section \ref{proofHBF}.

The total space of an $\F$-curve consists of five components, one of which is the universal curve over $\ov{M}_{0,4}$ and the other four of which are constant families.  We will think of the total space $X \cong \ov{M}_{0,5}$ of spines as the blow-up of 3 points on the diagonal in $\PP^1 \times \PP^1$.  The points of attachment to the legs $L(A_i)$ labeled by $A_1$, $A_2$ and $A_3$ will correspond to the 3 sections of $X$ through the exceptional divisors, while the point of attachment to the leg $L(A_4)$
will correspond to the diagonal.  We denote the classes of the total transforms of two rulings on $\PP^1 \times \PP^1$ by $F$ (for fiber) and $S$ (for section), and the exceptional divisors by $E_i$.  Then on $X \cong \ovop{M}_{0,5}$, the 10 boundary classes are given by
\begin{equation}\label{eqn-curveclassMz5}
\begin{split}
	D_{15} &= S - E_{1}, D_{25} = S - E_{2}, D_{35} = S - E_{3},
	D_{45} = F + S - E_{1} - E_{2} - E_{3},\\
	D_{14} &= F - E_{1}, D_{24} = F - E_{2}, D_{34} = F - E_{3},
	D_{23} = E_{1}, D_{13} = E_{2}, D_{12} = E_{3}.
\end{split}
\end{equation}

We will map $X$ into $\PP^d$ in such a way so that each fiber is the image of the spine of the $\F$-curve.  The general member of this pencil will be GIT-semistable, and the only GIT-unstable divisor meeting this pencil will be $D_{deg}$.  Let $\sigma(A_{i})$ be the degree of the leg containing marked points in $A_{i}$.\footnote{If $U_{d,n}^{ss} = U_{d,n}^{s}$, then the degree of the leg
is uniquely determined by the $\sigma$ function in \cite{GJM},
but if there are strictly semi-stable points, then the degree is not determined
uniquely. In this case we can take any degree distribution which gives
semistable points.}
Then the general fiber must have degree
\[
	b := d - \sum_{i=1}^4 \sigma ( A_i ) .
\]
As the cross-ratio of the 4 points on the spine varies, there are 3 points where the spine breaks into two components.  The degree of one of these components where $A_i$ and $A_j$ come together is exactly
\[
	c_{ij} := d - \sigma ( A_i ) - \sigma ( A_j ) - \sigma ( [n] \backslash
	( A_i \sqcup A_j ))
	= \sigma ( A_i \sqcup A_j ) - \sigma ( A_i ) - \sigma ( A_j ) .
\]
We therefore consider the following divisor class on $X$ (which depends on an integer $a \geq 0$):
\[
	H(a) := aF + bS - \sum_{i=1}^3 c_{i4} E_i .
\]

\begin{lemma}
\label{H(a)bpf}
For $a \gg 0$, $H(a)$ is base-point free.
\end{lemma}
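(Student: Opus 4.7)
The plan is to establish base-point freeness by combining restriction-sequence arguments along a general fiber of the first ruling and along the components of each reducible fiber, and then invoking a vanishing theorem.

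First, I would record the inequalities $b\ge 0$, $c_{i4}\ge 0$, and $c_{i4}\le b$. These are forced because $b$ is the degree of a general (irreducible) spine embedded in $\mathbb{P}^d$, and each $c_{i4}$ is the degree of one of the two components into which this spine degenerates when the legs $L(A_i)$ and $L(A_4)$ collide---so $c_{i4}$ is the degree of an effective subcurve of a degree-$b$ curve.

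Next, I would restrict $H(a)$ to a general fiber $F_0\cong\mathbb{P}^1$ in the class $F$ (one missing the three blown-up points). Then $H(a)|_{F_0}\cong\mathcal{O}_{F_0}(H(a)\cdot F)=\mathcal{O}_{F_0}(b)$ is globally generated, and from the short exact sequence
\begin{equation*}
0\longrightarrow \mathcal{O}_X(H(a)-F)\longrightarrow \mathcal{O}_X(H(a))\longrightarrow \mathcal{O}_{F_0}(H(a)|_{F_0})\longrightarrow 0,
\end{equation*}
once $H^1(X,H(a)-F)=0$ the sections lift. As $F_0$ varies over general fibers, it sweeps out $X$ minus the three special (reducible) fibers, each of which decomposes as $E_i\cup(F-E_i)$. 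After this step the base locus of $|H(a)|$ is therefore confined to $\bigcup_i\bigl(E_i\cup(F-E_i)\bigr)$.

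I would then carry out the same restriction routine on the six remaining components. The computations $H(a)|_{E_i}\cong\mathcal{O}_{E_i}(c_{i4})$ and $H(a)|_{F-E_i}\cong\mathcal{O}_{\mathbb{P}^1}(b-c_{i4})$ show both line bundles are globally generated by the first-step inequalities, and sections lift provided $H^1(X,H(a)-E_i)=0$ and $H^1(X,H(a)-(F-E_i))=0$, eliminating any remaining base points.

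The main obstacle is therefore the vanishing $H^1(X,H(a)-D)=0$ for $D\in\{F,\,E_i,\,F-E_i\}$ and $a\gg 0$. I plan to deduce it from Kawamata--Viehweg vanishing after verifying that each $H(a)-D$ is big and nef for large $a$. Bigness follows from $H(a)^2=2ab-\sum c_{i4}^2$ being linear in $a$ with positive leading coefficient whenever $b>0$; the degenerate case $b=0$ forces all $c_{i4}=0$ by the first-step inequalities, so $H(a)=aF$ is pulled back from $\mathbb{P}^1$ and is already base-point free. Nefness reduces to a finite check of intersection numbers against the ten boundary classes of $X\cong\overline{M}_{0,5}$ listed in~\eqref{eqn-curveclassMz5}, each of which becomes non-negative once $a$ is sufficiently large, again by the first-step inequalities.
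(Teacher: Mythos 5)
Your overall strategy---peeling off fibers via restriction sequences and feeding the needed $H^1$-vanishing into a vanishing theorem---is legitimate and genuinely different from the paper's route. The paper instead cites the classical fact that on a del Pezzo surface (which $X\cong\ovop{M}_{0,5}$ is) a nef line bundle is automatically base-point free, and then simply verifies nefness of $H(a)$ itself by intersecting with the ten $(-1)$-curves $D_{ij}$, using $b=c_{i4}+c_{jk}$ and $c_{ij}\ge 0$. That is a one-step argument, whereas your plan requires the whole lifting machinery plus a separate nefness check for each twist.

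There is, however, a genuine gap in your vanishing step. First, the version of Kawamata--Viehweg you invoke gives $H^1(X,K_X+L)=0$ for $L$ big and nef; knowing that $H(a)-D$ is big and nef does not by itself kill $H^1(X,H(a)-D)$. What rescues this in the present situation is precisely the del Pezzo property: since $-K_X$ is ample, $H(a)-D$ nef already forces $H(a)-D-K_X$ ample and Kodaira gives the vanishing---but you never invoke that $X$ is del Pezzo. Second, and more seriously, the hypothesis you rely on can simply fail: $H(a)-E_i$ need not be nef for any $a$. Computing against $D_{i4}=F-E_i$ gives $(H(a)-E_i)\cdot(F-E_i)=b-c_{i4}-1=c_{jk}-1$, which is \emph{constant in $a$} and equals $-1$ whenever $c_{jk}=0$ (a perfectly allowable degree distribution). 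So the statement ``each intersection number becomes non-negative for $a\gg 0$'' is false for the twists $H(a)-E_i$ and $H(a)-(F-E_i)$. One can still prove $H^1(X,H(a)-E_i)=0$ in that degenerate case---e.g. by restricting further to $F-E_i$, where the twisted bundle has degree $-1$ and contributes nothing, reducing to $H^1(X,H(a-1))=0$---but that requires an additional peeling step you have not written down. A cleaner patch would be to restrict to the entire reducible fiber (class $F$) rather than to its two components separately, since $H(a)-F=H(a-1)$ is nef for $a\gg 0$ and $H(a)$ restricted to the nodal fiber is globally generated because it has nonnegative degree on each component. Alternatively, adopt the paper's much shorter route: check nefness of $H(a)$ against the ten boundary classes and cite nef $\Rightarrow$ base-point free on a del Pezzo surface.
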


\begin{proof}
Since $X$ is a del Pezzo surface, it is well-known that if $H(a)$ is nef, then $H(a)$ is base-point free.
On $X \cong \ovop{M}_{0,5}$, the cone of curves is generated by the classes $D_{ij}$.  Thus by using \eqref{eqn-curveclassMz5}, it is straightforward to check that $H(a)$ is nef if and only if
\[
	a \ge c_{i4}, b \ge c_{i4}, a+b \ge \sum_{i=1}^{3}c_{i4}.
\]
The second inequality is immediate because $b = c_{12} + c_{34} = c_{13} +
c_{24} = c_{14} + c_{23}$.  So if $a$ is sufficiently large, then $H(a)$ is nef and base-point free.
\end{proof}

\begin{lemma}
\label{Surjectivity}
For $a \gg 0$, the map $H^0 (X,H(a)) \to H^0 (F, H(a) \vert_F )$ is surjective.
\end{lemma}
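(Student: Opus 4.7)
The plan is to extract the surjectivity from a cohomology vanishing. Fix a smooth fiber $F_0 \subset X$ in the ruling class $F$. Tensoring the ideal sheaf sequence $0 \to \cO_X(-F_0) \to \cO_X \to \cO_{F_0} \to 0$ with $\cO_X(H(a))$ produces
\[
0 \to \cO_X(H(a)-F) \to \cO_X(H(a)) \to \cO_{F_0}(H(a)|_{F_0}) \to 0,
\]
so taking $H^0$ reduces the problem to showing $H^1(X, H(a)-F) = 0$ for $a \gg 0$.

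For this vanishing, I would invoke Kawamata--Viehweg. The canonical class of $X$, which is $\PP^1 \times \PP^1$ blown up at three points on the diagonal, is $K_X = -2F - 2S + E_1 + E_2 + E_3$, so
\[
L := H(a) - F - K_X = (a+1)F + (b+2)S - \sum_{i=1}^{3}(c_{i4}+1)E_i.
\]
This has precisely the same shape as $H(a')$ with modified constants, and the numerical characterization of nef divisors on $X$ used in the proof of Lemma~\ref{H(a)bpf}---which uses that the extremal rays of $\overline{NE}(X)$ are spanned by the boundary classes in (\ref{eqn-curveclassMz5})---applies verbatim. It shows $L$ is nef as soon as $a+1 \ge c_{i4}+1$, $b+2 \ge c_{i4}+1$, and $a+b+3 \ge \sum_{i=1}^{3}(c_{i4}+1)$, all of which hold for $a$ sufficiently large. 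Bigness is then immediate, since $L^2$ grows linearly in $a$.

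Kawamata--Viehweg vanishing then yields $H^1(X, K_X + L) = H^1(X, H(a) - F) = 0$, which by the exact sequence above gives the required surjectivity. No step here presents a serious obstacle: the restriction sequence is standard, the nefness check for $L$ is a bounded perturbation of the one already carried out in Lemma~\ref{H(a)bpf}, and the remaining cohomology vanishing is an off-the-shelf application of Kawamata--Viehweg on a smooth projective surface.
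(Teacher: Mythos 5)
Your proof is correct and follows essentially the same route as the paper: the same ideal-sheaf sequence reduces the claim to $h^1(X,H(a)-F)=0$, which both you and the paper obtain from a vanishing theorem after verifying positivity of the relevant twist via the nef criterion of Lemma~\ref{H(a)bpf}. The only cosmetic difference is that the paper observes $H(a)-F=H(a-1)$ and invokes Kodaira vanishing (using ampleness of $-K_X$ on the del Pezzo surface), whereas you write out $L=H(a)-F-K_X$ explicitly and use Kawamata--Viehweg on the nef and big $L$.
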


\begin{proof}
By the exact sequence
\[
	0 \to H^0 (X,H(a)-F) \to H^0 (X,H(a)) \to H^0 (F, H(a) \vert_F ) \to
	H^1 (X,H(a)-F),
\]
it suffices to show that $h^{1}(X,H(a)-F) = 0$.  Since $X$ is a del Pezzo surface, $-K_{X}$ is ample.  Thus
$H(a) - K_{X}$ is ample for $a \gg 0$ by Lemma \ref{H(a)bpf} and $h^{i}(X, H(a)) = h^{i}(X, H(a) - K_{X} + K_{X}) = 0$ for $i > 0$ by the Kodaira vanishing theorem.  Since $H(a) - F = H(a-1)$ by definition, $h^{1}(X, H(a) - F) = 0$ for large $a$ as well.
\end{proof}

By a Riemann-Roch calculation, if $a \gg 0$ then
\[
	h^{0}(X, H(a)) = 3ab - \sum_{i=1}^{3}{c_{i4}+1\choose 2} + 1.
\]
For sufficiently large $a$, $h^{0}(X, H(a))$ is therefore greater than $d + 1$, so we cannot use the complete linear system $|H(a)|$ to construct a map to $\PP^{d}$.  To deal with this problem, we use the following Lemma.

\begin{lemma}
\label{LinearSeries}
Let $V \subset H^0 (X,H(a))$ be a general linear subspace of dimension $h^0 (F, H(a) \vert_F ) +1 = b+2$.  For $a \gg 0$, the map $V \to H^0 (F, H(a) \vert_F )$ is surjective for every fiber $F$.
\end{lemma}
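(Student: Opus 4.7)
The plan is to realize non-surjectivity of $V \to H^0(F, H(a)|_F)$ as a Schubert condition on $V$ inside the Grassmannian $\op{Gr}(b+2, N)$, where $N := H^0(X, H(a))$, and then perform a dimension count against the $1$-parameter family of fibers $F \in \PP^1$.

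First, I would observe that since $H(a) - F = H(a-1)$ does not depend on the choice of fiber, the proof of Lemma \ref{Surjectivity} in fact shows that for $a \gg 0$ the restriction map $r_F : H^0(X, H(a)) \to H^0(F, H(a)|_F)$ is surjective for \emph{every} fiber $F$, not merely a general one. Since $H(a) \cdot F = b$, the target has dimension $b+1$, so $K_F := \ker(r_F)$ has codimension exactly $b+1$ in $N$. For a subspace $V$ of dimension $b+2$, the restriction $V \to H^0(F, H(a)|_F)$ fails to be surjective precisely when $V + K_F \neq N$, equivalently $\dim(V \cap K_F) \geq 2$. The expected value $\dim V + \dim K_F - \dim N = 1$ is attained generically, so the failure locus
\[
Z_F := \{V \in \op{Gr}(b+2, N) : \dim(V \cap K_F) \geq 2\}
\]
will be a Schubert variety of type $\Omega_{(1,1)}$ and codimension $2$ in the Grassmannian.

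The total bad locus is $Z := \bigcup_{F \in \PP^1} Z_F$. As $F$ varies, the kernels $K_F$ sweep out at most a $1$-parameter family of codimension-$(b+1)$ subspaces of $N$, so
\[
\dim Z \;\leq\; 1 + \bigl(\dim \op{Gr}(b+2, N) - 2\bigr) \;<\; \dim \op{Gr}(b+2, N).
\]
A general $V$ of dimension $b+2$ therefore avoids every $Z_F$, and for such $V$ the restriction $V \to H^0(F, H(a)|_F)$ is surjective for all fibers $F$.

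The only step that requires genuine care is the codimension count for $Z_F$. One can either invoke the Schubert calculus fact that $\op{codim} \Omega_{(1,1)} = 2$ directly, or run the parameter count: choosing an $m$-plane $W \subset K_F$ and then extending to an $V \supset W$ of dimension $b+2$ shows that $\{V : \dim(V \cap K_F) \geq m\}$ has codimension $m(m-1)$, which equals $2$ when $m = 2$. The remaining steps are formal, and I do not anticipate any substantial obstacle beyond this numerology.
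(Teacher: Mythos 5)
Your proof is correct and takes essentially the same approach as the paper: both identify the failure locus as a codimension-$2$ Schubert condition $\dim(V \cap K_F) \geq 2$ and then perform a dimension count over the one-parameter family of fibers $F \in \PP^1$. The only cosmetic difference is that the paper packages the argument via the incidence variety $Z \subset \PP^1 \times \op{Gr}(b+2, H^0(X, H(a)))$ and projects to the Grassmannian, whereas you take the union $\bigcup_F Z_F$ directly; these are equivalent.
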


\begin{proof}
For a given fiber $F$, write $K_F$ for the kernel of the map $H^0 (X,H(a)) \to H^0 (F, H(a) \vert_F )$.  By Lemma \ref{Surjectivity}, $K_F$ is a linear space of dimension $h^0 (X,H(a)-F)$.  We will show that dim$V \cap K_F = 1$ for every fiber $F$.  In particular, denote the fiber over a point $y \in \ov{M}_{0,4} \cong \mathbb{P}^{1}$ by $F_y$, and consider the variety
\[
	Z = \{ (y,V) \in \mathbb{P}^1 \times Gr(b+2, H^0 (X,H(a))) \;|\;
	\text{dim}V \cap K_{F_y} \geq 2 \} .
\]
The fibers of $Z$ over $\mathbb{P}^1$ are Schubert varieties, which are known to be irreducible of codimension 2 in the Grassmannian.  It follows that $\mathrm{dim}\; Z < \mathrm{dim}\; Gr(b+2, H^0 (X,H(a))$, and thus $Z$ does not map onto the Grassmannian.  We therefore see that, for the general $V \in Gr(b+2, H^0 (X,H(a)))$, dim$V \cap K_{F_y} < 2$ for every $y \in \mathbb{P}^1$.  On the other hand, we see that dim$V \cap K_{F_y} \geq 1$ trivially for dimension reasons.  It follows that the map $V \to H^0 (F, H(a) \vert_F )$ is surjective for every fiber $F$.
\end{proof}

By Lemma \ref{LinearSeries}, if we consider the map $X \to \PP^{b+1}$ corresponding to the linear series $V$, we see that each individual fiber is mapped to $\PP^{b+1}$ via a complete linear series.  The general fiber therefore maps to a smooth rational normal curve of degree $b$ and the three special fibers map to nodal curves whose two components have the appropriate degrees.  Then, as long as $b<d$, one can embed this $\PP^{b+1}$ in $\PP^d$ and obtain a family of curves in this projective space.

Now consider the case of $b = d$. Because $X$ is a surface, we can take a point $p \in \mathbb{P}^{b+1} \smallsetminus X$ since $d \ge 2$.  Considering a projection from $p$, we obtain a family of curves in $\mathbb{P}^{d}$ with the same degree distribution.  We must choose the point $p$ such that a general member of such a family of curves is semistable. Because it has the correct degree distribution, it suffices to check that a general member of the family is not contained in a hyperplane.  But the image of a curve under projection is degenerate only if the original curve is degenerate.

To each of the $4$ sections we attach a family of curves that does not vary in moduli.  Using the same trick as before, we may take $4$ copies of $\PP^1 \times \PP^1$, mapped into $\PP^d$ via a linear series $V_i \subset \vert \mathcal{O} (x_i ,y_i ) \vert$, where
\[
	x_{i} = \begin{cases}
	H(a) \cdot ( S-E_i ) = a - c_{i4}, & i \ne 4,\\
	H(a) \cdot (F+S-\sum_{j=1}^{3}E_{j}) = a+b-\sum_{j=1}^{3}c_{j4},
	& i = 4
	\end{cases}
\]
and $y_{i} = \sigma ( A_i )$ is the degree of the leg.
Note that if $b = d$, then $\sigma(A_{i}) = 0$ so we don't need to worry about the construction of extra components.

\begin{proposition}
\label{RationalLift}
The family we have constructed is a rational lift of $\varphi_{\gamma,A} (F(A_1 , A_2 , A_3 , A_4 ))$.  It does not intersect any GIT-unstable divisor other than $D_{deg}$.
\end{proposition}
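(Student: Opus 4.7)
The plan is to verify three conditions: a general point of $\widetilde{C}$ lies in $U_{d,n}^{ss}$; $\overline{\pi(\widetilde{C})} = C := \varphi_{\gamma,A}(F(A_1,A_2,A_3,A_4))$ with $\pi|_{\widetilde{C}}$ of degree one; and $\widetilde{C}$ meets the GIT-unstable locus only along $D_{deg}$. I will first nail down the fiberwise geometry using Lemmas \ref{H(a)bpf}--\ref{LinearSeries}, then use the classification of unstable divisors from \cite{GJM} to eliminate every possibility other than $D_{deg}$.

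First, I would analyze the fibers. By Lemma \ref{LinearSeries}, for a general choice of $V \subset H^0(X, H(a))$ the induced map $X \to \mathbb{P}^{b+1}$ restricts on each fiber $F_y$ to the morphism given by the complete linear system of degree $b$; thus a general fiber is a rational normal curve of degree $b$, and the three reducible fibers map to nodal unions of rational normal curves of the prescribed degrees $c_{i4}$ and $b-c_{i4}$. When $b<d$ I embed $\mathbb{P}^{b+1} \hookrightarrow \mathbb{P}^d$ and glue on the four constant leg families built from the $V_i$ on $\mathbb{P}^1 \times \mathbb{P}^1$; when $b=d$ I project from a general point $p \notin X$ and no legs are attached. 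Either way, each fiber is a nodal rational curve in $\mathbb{P}^d$ of total degree $b + \sum_i \sigma(A_i) = d$, carrying the $n$ marked points in the configuration of a generic element of $F(A_1,A_2,A_3,A_4)$.

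Second, I would bound the unstable divisors met by $\widetilde{C}$. By the proof of \cite[Proposition 4.6]{GJM}, the irreducible unstable divisors in $U_{d,n}$ are $D_{deg}$ together with those parametrizing curves with unstable degree distribution. The construction guarantees that every fiber of $\widetilde{C}$ carries exactly the $\sigma$-degree distribution (with the permissible refinement on the three special fibers), so $\widetilde{C}$ is disjoint from the divisors of the second type. A general fiber is non-degenerate in $\mathbb{P}^d$: in the case $b<d$ because the spine spans its $\mathbb{P}^{b+1}$ and the four legs supply the remaining $d-b$ independent directions, and in the case $b=d$ because for generic $p$ the projection preserves linear non-degeneracy. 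Hence $\widetilde{C} \not\subset D_{deg}$, so a general point of $\widetilde{C}$ is semistable and the only unstable divisor the family meets is $D_{deg}$.

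Third, I would verify the remaining rational-lift conditions. As $y$ varies over $\ov{M}_{0,4} \cong \mathbb{P}^1$ the cross-ratio of the four attaching points on the spine takes every value, so $\overline{\pi(\widetilde{C})} = C$. Since this cross-ratio is a complete projective invariant of four ordered points on a rational normal curve, distinct general fibers lie in distinct $\SL(d+1)$-orbits, so $\pi|_{\widetilde{C}}$ is generically injective and therefore of degree one. The principal obstacle I expect is the non-degeneracy verification in the $b=d$ case: I must ensure that a generic projection from $p \in \mathbb{P}^{b+1} \smallsetminus X$ does not cause some fiber to become hyperplane-contained in $\mathbb{P}^d$. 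This reduces to observing that the locus of $p$ forcing a given non-degenerate fiber into a hyperplane is a proper subvariety of $\mathbb{P}^{b+1}$, and then running a dimension count over the one-parameter family of fibers to conclude that a generic $p$ works uniformly.
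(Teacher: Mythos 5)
Your proposal tracks the paper's argument closely: the paper spreads the verification between the discussion preceding the Proposition (Lemmas \ref{H(a)bpf}--\ref{LinearSeries}, the projection trick for $b=d$, the attachment of constant leg families) and a short proof that amounts to computing the intersection numbers $H(a)\cdot F$, $H(a)\cdot E_i$, $H(a)\cdot(F-E_i)$, and $\mathcal{O}(x_i,y_i)\cdot\mathcal{O}(1,0)$ to confirm that every member of the family has the $\sigma$-prescribed degree distribution. You reorganize this into three explicit checks and, in addition, spell out the degree-one claim for $\pi|_{\widetilde{C}}$ via the cross-ratio being a complete invariant for four points on a rational normal curve; the paper leaves that implicit. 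On the $b=d$ case your handling is in fact a bit tighter than the paper's terse sentence ``the image of a curve under projection is degenerate only if the original curve is degenerate'': since the spans of the fibers sweep out only a one-parameter family of hyperplanes in $\mathbb{P}^{b+1}$ and $b+1=d+1\ge 3$, a general $p$ misses all of them, so generic projection preserves non-degeneracy of every fiber. One small simplification you could make: you flag uniform non-degeneracy over the whole family as the ``principal obstacle,'' but the statement only forbids $\widetilde{C}$ from meeting unstable divisors other than $D_{deg}$ --- finitely many degenerate fibers would land in $D_{deg}$ and are harmless, and in the $b=d$ case $\sigma(A_i)=0$ for all $i$ so there are no legs to worry about. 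Thus generic non-degeneracy of a general fiber already suffices; the stronger uniform claim, while true by your dimension count, is not needed.
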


\begin{proof}
We claim that all of the members of this family satisfy the degree conditions required by semi-stability.  Indeed, the general member is a nodal curve with 4 components labeled by the $A_i$'s.  The degree of the leg labeled by $A_i$ is $\mathcal{O} (x_{i},y_{i}) \cdot \mathcal{O} (1,0) = \sigma ( A_i )$ and the degree of the spine is $H(a) \cdot F = b = d - \sum_{i=1}^4 \sigma ( A_i )$.  As one varies the cross-ratio of the 4 points on the spine, there are 3 points where the spine breaks into two components.  The degree of these components are for instance
$H(a) \cdot E_1 = c_{14} = d - \sigma ( A_4 ) - \sigma ( A_1 ) - \sigma ( [n] \backslash ( A_4 \cup A_1 ))$ and
$H(a) \cdot (F - E_{1}) = b - c_{14}= d - \sigma(A_{2})-\sigma(A_{3}) - \sigma([n]\backslash (A_{2} \cup A_{3}))$.
\end{proof}


\subsection{Proof of Theorem \ref{thm:intersectionFormula}}\label{proofHBF}
In this section we prove Theorem \ref{thm:intersectionFormula}, which relies on the curve constructed in Section \ref{curveliftHBF}.

\begin{proof} As is explained in Section \ref{approachHBF}, to prove Theorem \ref{thm:intersectionFormula}, we shall compute the intersection of $C$, the image of $F(A_1,A_2,A_3,A_4)$ in $\Udnga{d}{\gamma}{A}$, with the natural ample line bundle $\overline{L}$.
To do this, it suffices to find a rational lift $\widetilde{C}$ of this curve to $U_{d,n}$ such that a general element of $\widetilde{C}$ is semistable, and compute the intersection upstairs.

By Proposition \ref{RationalLift}, the family constructed in Section \ref{curveliftHBF} is such a lift, so we can carry out these computations with it.  To compute the intersection of
$\widetilde{C}$ with $\mathcal{O}_{Chow} (1)$, fix a general codimension $2$ linear space in $\PP^d$.  The intersection number is precisely the number of curves in the family that intersect this linear space.  In other words, it is the total degree of our $5$ surfaces.  Hence
\begin{eqnarray*}
	\widetilde{C} \cdot \mathcal{O}_{Chow} (1)
	&=& H(a)^2 + \sum_{i=1}^4 \mathcal{O} (x_i , y_i )^2\\
	&=& 2ab - \sum_{i=1}^3 c_{i4}^2 + \sum_{i=1}^{3}
	2(a-c_{i4})\sigma(A_{i})
	+ 2(a+b-\sum_{j=1}^{3}c_{j4})\sigma(A_{4})\\
	&=& 2ad +2\sigma(A_{4})b - \sum_{i=1}^{3}c_{i4}^{2}
	 - \sum_{i=1}^{3}2(\sigma(A_{i})+\sigma(A_{4}))c_{i4}.
\end{eqnarray*}

Similarly, to compute the intersection of $\widetilde{C}$ with $\mathcal{O}_{\mathbb{P}^d_j} (1)$, fix a general hyperplane in $\PP^d$.  The intersection number is precisely the number of points at which the $j$-th section meets this hyperplane.  In other words, it is the degree of the $j$-th section.  If $A_{i}$ is the part of the partition containing $j$, then we see that
\[
	\widetilde{C} \cdot \mathcal{O}_{\PP^d_j} (1)
	= \mathcal{O} (x_i , y_i ) \cdot \mathcal{O} (0,1)
	= x_i =
	\begin{cases}
	a - c_{i4}, & i \ne 4,\\
	a + b - \sum_{k=1}^{3}c_{k4}, & i = 4.
	\end{cases}
\]

One can then easily compute the intersection with $L = \bigotimes_{j=1}^n \mathcal{O}_{\mathbb{P}^d_j} ( a_j ) \otimes \mathcal{O}_{Chow}( \gamma )$ by linearity.
If we denote $\sum_{i \in A_{j}}a_{i}$ by $w_{A_{j}}$ and
$w = \sum_{i=1}^{n} a_i$, then
\begin{eqnarray*}
	\widetilde{C} \cdot L &=&
	\gamma(2ad + 2\sigma(A_{4})b - \sum_{i=1}^{3}c_{i4}^{2}
	- \sum_{i=1}^{3}2(\sigma(A_{i}+A_{4}))c_{i4})\\
	&&+ \sum_{i=1}^{3}w_{A_{i}}(a - c_{i4}) +
	w_{A_{4}}(a+b-\sum_{i=1}^{3}c_{i4})\\
	&=& (2d\gamma + w)a -\sum_{i=1}^{3}c_{i4}^{2}\gamma
	+ (2\sigma(A_{4})\gamma + w_{A_{4}})b\\
	&& - \sum_{i=1}^{3}(2\gamma(\sigma(A_{1})+\sigma(A_{4}))
	+ w_{A_{i}} + w_{A_{4}})c_{i4}.
\end{eqnarray*}

Recall that $C \cdot \overline{L} = \widetilde{C} \cdot (L - tD_{deg})$ for some positive rational number $t$ (Section \ref{approachHBF}).  It remains to determine the value of $t$.  By \cite[Lemma 2.1]{CHS08}, on the moduli space of stable maps $\ovop{M}_{0,0}(\PP^{d},d)$,
\[
	D_{deg} = \frac{1}{2d}\Big(
	(d+1)H - \sum_{k=1}^{\lfloor \frac{d}{2}
	\rfloor}k(d-k)D_{k}\Big),
\]
where $H$ is the locus of stable maps whose image intersects a fixed
codimension 2 linear subspace and $D_{k}$ is the closure of the locus of curves with two irreducible components of degree $k$ and $d-k$, respectively.
If we pull-back $D_{deg}$ by the forgetful map
$f : \ovop{M}_{0,n}(\PP^{d}, d) \to \ovop{M}_{0,0}(\PP^{d},d)$,
then we obtain the same formula for $D_{deg}$ on
$\ovop{M}_{0,n}(\PP^{d}, d)$.
Now for the cycle map $g : \ovop{M}_{0,n}(\PP^{d},d) \to U_{d,n}$,
$g_{*}(D_{deg}) = D_{deg}$, we have $g_{*}(H) = \cO_{Chow}(1)$ and
$g_{*}(D_{k}) = D_{k}$.
Therefore the same formula holds for $U_{d,n}$.

So
\begin{eqnarray*}
	\widetilde{C} \cdot D_{deg} &=&
	\frac{d+1}{2d}\Big(
	2ad +2\sigma(A_{4})b - \sum_{i=1}^{3}c_{i4}^{2}
	 - \sum_{i=1}^{3}2(\sigma(A_{i})+\sigma(A_{4}))c_{i4}
	 \Big)\\
	&&+ \frac{1}{2d}\Big(
	\sum_{i=1}^{4}\sigma(A_{i})(d-\sigma(A_{i}))
	- \sum_{i=1}^{3}(\sigma(A_{i} \cup A_{4}))
	(d - \sigma(A_{i} \cup A_{4})) \Big).
\end{eqnarray*}

Note that the rational lift depends on the choice of $a$.
To obtain an intersection number
$C \cdot \overline{L} = \widetilde{C} \cdot (L - t D_{deg})$ that is independent of the choice of $a$,
the coefficient of $a$ must be $0$. Thus
\[
	2d\gamma + w - t\frac{(d+1)}{2d}2d = 0
\]
and $t = \frac{2d\gamma + w}{1+d} = 1 + \gamma$.

Therefore,
\begin{eqnarray*}
	C \cdot \overline{L} &=& \widetilde{C} \cdot
	(L - (1 + \gamma)D_{deg})\\
	&=& (\sum_{i=1}^{3}c_{i4}^{2})\frac{w}{2d} +
	(w_{A_{4}} - \frac{w}{d}\sigma(A_{4}))b
	+ \sum_{i=1}^{3}(\frac{w}{d}(\sigma(A_{i})+\sigma(A_{4}))
	-w_{A_{i}}-w_{A_{4}})c_{i4}\\
	&&- \frac{1+\gamma}{2d}(\sum_{i=1}^{4}\sigma(A_{i})
	(d- \sigma(A_{i}))
	- \sum_{i=1}^{3}\sigma(A_{i} \cup A_{4})
	(d-\sigma(A_{i} \cup A_{4}))).
\end{eqnarray*}
\end{proof}


\subsection{Example and application of  Theorem \ref{thm:intersectionFormula}}\label{IntForm}
As the $\F$-curves span the vector space of 1-cycles, Theorem \ref{thm:intersectionFormula} gives, in principal, the class of $\Vqd{D}_{\gamma,A}$ in the Ner\'on Severi space.  Using a particular basis (described in Definition \ref{basis}), we explicitly write down the class of $\Vqd{D}_{\gamma,A}$ for $\op{S}_n$-invariant $A$.  The classes depend on the intersection numbers, which as we see below in Example \ref{JensenApp}, are particularly
simple for $\Vqd{D}_{\frac{\ell-1}{\ell+1},(\frac{1}{\ell+1})^{2g+2}}$.

\begin{definition}\label{basis}\cite[Section 2.2.2, Proposition 4.1]{agss} For $1 \le j \le g:= \lfloor \frac{n}{2} - 1 \rfloor$, let $\op{F}_j$ be the $\op{S}_n$-invariant $\op{F}$-curve $F_{1,1,j,n-j-2}$.  The set $\{\op{F}_j : 1 \le j \le g\}$ forms a basis for the group of $1$-cycles $\op{N}_1(\ovop{M}_{0,n})^{\op{S}_{n}}$.
\end{definition}

\begin{definition}\label{DivisorBasis}\cite[Section 3]{KeelMcKernanContractible} For $2 \le j \le \lfloor \frac{n}{2} \rfloor$, let $\op{B}_j$ be the $\op{S}_n$-invariant divisor given by the sum of boundary divisors indexed by sets of size $j$:
$$B_j=\sum_{J \subset [n], |J|=j}\delta_J.$$
The set $\{\op{B}_j : 2 \le j \le g+1\}$ forms a basis for the group of codimension-$1$-cycles $\op{N}^1(\ovop{M}_{0,n})^{\op{S}_{n}}$.
\end{definition}

\begin{corollary}\label{class}  Fix $n=2g+2$ or $n=2g+3$ and $j \in \{1,\cdots, g \}$, and
write $a(\gamma, A)_{ j} = \Vqd{D}_{\gamma,A} \cdot F_{j}$.  If $A$ is an $\op{S}_n$-invariant choice of weights, then $\Vqd{D}_{\gamma,A} \equiv \sum_{r=1}^g b(\gamma, A)_{r} B_{r+1}$, where
$$b(\gamma, A)_r = \sum_{j=1}^{r-1}\Bigg( \frac{r(r+1)}{n-1}-(r-j)\Bigg)a(\gamma, A)_j  + \frac{r(r+1)}{n-1} \sum_{j =r}^g a(\gamma, A)_j, $$
when $n= 2g+3$ is odd, and
$$b(\gamma, A)_r = \sum_{j =1}^{r-1}\Bigg( \frac{r(r+1)}{n-1}-(r-j)\Bigg)a(\gamma, A)_j  + \frac{r(r+1)}{n-1} \sum_{j =r}^{g-1} a(\gamma, A)_j  + \frac{r(r+1)}{2(n-1)}a(\gamma, A)_{g}$$
when $n=2g+2$ is even.
\end{corollary}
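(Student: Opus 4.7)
The plan is to reduce this to a linear algebra problem. Since the weight vector $A$ is $S_n$-invariant, so is $\mathcal{D}_{\gamma,A}$, and hence $\mathcal{D}_{\gamma,A}\in N^1(\overline{M}_{0,n})^{S_n}$. By Definition \ref{DivisorBasis} there is a unique expression $\mathcal{D}_{\gamma,A}\equiv \sum_{r=1}^{g} b_r B_{r+1}$, and pairing both sides with the basis $\{F_j\}_{j=1}^{g}$ of Definition \ref{basis} yields a $g\times g$ linear system
\[
a_j = \sum_{r=1}^{g} (B_{r+1}\cdot F_j)\, b_r, \qquad 1\le j\le g.
\]
So the statement reduces to computing the intersection matrix $M_{rj} := B_{r+1}\cdot F_j$ and verifying that the expressions displayed in the corollary solve the system.

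The entries $M_{rj}$ are computed using the standard pairing for boundary divisors with $F$-curves. Representing $F_j$ by $F(A_1,A_2,A_3,A_4)$ with $|A_1|=|A_2|=1$, $|A_3|=j$, $|A_4|=n-j-2$, the pairing $F(A_1,A_2,A_3,A_4)\cdot \delta_J$ equals $+1$ when $\{J,J^c\}=\{A_i\cup A_k,\,A_\ell\cup A_m\}$ for some pairing of $\{1,2,3,4\}$, equals $-1$ when $J=A_i$ (or $J^c=A_i$) for some $i$ with $|A_i|\ge 2$, and vanishes otherwise. Summing over $J\subset[n]$ with $|J|=r+1$ and matching against the three pair-union sizes $\{2,\,j+1,\,n-j-1\}$ and the two leg sizes $\{j,\,n-j-2\}$ produces a nearly tridiagonal matrix $M$ with completely explicit entries. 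The parity distinction between $n=2g+2$ and $n=2g+3$ enters only through the pairing of $F_g=F_{1,1,g,g}$ with $B_{g+1}$, since when $n$ is even the two pair-union divisors of $F_g$ have self-complementary split $\{g{+}1,g{+}1\}$ and are therefore counted twice in the sum $B_{g+1}=\sum_{|J|=g+1}\delta_J$.

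With $M$ in hand, the proposed formulas for $b_r$ can be verified by substitution: plugging the candidate into $\sum_r b_r M_{rj}$, the coefficient of each $a_k$ with $k\ne j$ collapses to $0$ via a short telescoping identity involving the diagonal weight $\frac{r(r+1)}{n-1}$ and the sub-/super-diagonal entries $\pm 1$, while the coefficient of $a_j$ reduces to $1$. The factor $\frac{1}{2}$ multiplying $a_g$ in the even case is the precise manifestation in the inverse matrix of the doubled entry $B_{g+1}\cdot F_g$ identified above. The main obstacle will be the bookkeeping near the corners of $M$: the $r=1$ row, where the universal pair $A_1\cup A_2$ contributes to $B_2\cdot F_j$ for every $j$, and the $r=g$ row, where self-complementary splits for even $n$ and the coincidence $n-j-3=g$ at $j=g$ break the otherwise tridiagonal pattern. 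Once these corner entries are correctly computed, the telescoping identity solving the linear system is routine.
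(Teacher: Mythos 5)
Your approach is mathematically sound in principle, but it is genuinely different from what the paper does: the paper's proof of Corollary \ref{class} is a one-line citation to \cite[Theorem 5.1]{agss}, which already gives precisely the change-of-basis formula from the intersection numbers $\{\Vqd{D}\cdot F_j\}_{j=1}^g$ to the coefficients of $\{B_{r+1}\}_{r=1}^g$ for an arbitrary $S_n$-invariant divisor. In other words, the content you propose to prove — compute $M_{rj}=B_{r+1}\cdot F_j$ and invert the resulting $g\times g$ system, keeping track of the doubling of $B_{g+1}\cdot F_g$ when $n=2g+2$ — is exactly the content of that cited theorem, and the corollary in this paper is just the instance where the invariant divisor is a Veronese quotient divisor.

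Your plan is thus a self-contained re-derivation of \cite[Theorem 5.1]{agss}. That is a legitimate alternative route, and your identification of the key structural features is accurate: the $-1$'s from legs of size $\ge 2$, the $+1$'s from the three pair-unions, the doubling of $\delta_J$ in $B_{g+1}$ when $|J|=|J^c|=g+1$, and the corner rows $r=1$ and $r=g$ where the pattern degenerates. However, you stop short of actually producing $M$ and exhibiting the telescoping identities that verify $\sum_r b_r M_{rj}=a_j$; since that bookkeeping is nontrivial and is exactly where the odd/even split and the $\tfrac{1}{2}$ factor must be tracked, the proposal as written is a credible plan rather than a complete proof. To finish it you would need to either carry out the inversion explicitly (essentially reproducing the proof of \cite[Theorem 5.1]{agss}) or, as the paper does, simply invoke that result. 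The net tradeoff: your route is self-contained and makes the origin of the parity distinction transparent, while the paper's route is shorter but delegates the computation to the reference.
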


\begin{proof}
This follows from the formula given in \cite[Theorem 5.1]{agss}.
\end{proof}

\begin{example}\label{JensenApp}
By the above, we see that
\[
\Vqd{D}_{\frac{\ell-1}{\ell+1},(\frac{1}{\ell+1})^{2g+2}} = \frac{1}{\ell +1} \sum_{r=1}^g \left( \frac{r(r+1)}{n-1} \left(\frac{g-\ell+1}{2}\right) - \lceil \frac{r-\ell+1}{2}\rceil_{+} \lfloor \frac{r-\ell+1}{2}\rfloor_{+}\right)B_{r+1} ,
\]
where
\[
\lceil x \rceil_{+} = \max \{ \lceil x \rceil, 0\}, \quad
\lfloor x \rfloor_{+} = \max \{\lfloor x \rfloor , 0\}.
\]
\end{example}

\begin{proof}
Indeed,
\begin{eqnarray*}
b(\gamma, A)_r &=& \sum_{j =1}^{r-1}\Bigg( \frac{r(r+1)}{n-1}-(r-j)\Bigg)a(\gamma, A)_j  + \frac{r(r+1)}{n-1} \sum_{j =r}^{g-1} a(\gamma, A)_j  + \frac{r(r+1)}{2(n-1)}a(\gamma, A)_{g}\\
&=& \frac{r(r+1)}{n-1}\sum_{j=1}^{g} a(\gamma, A)_{j} -
\frac{r(r+1)}{2(n-1)}a(\gamma, A)_{g} -
\sum_{j=1}^{r-1}(r-j)a(\gamma, A)_{j}.
\end{eqnarray*}
By Corollary \ref{Jensen},
\[
\sum_{j=1}^{g}a(\gamma, A)_{j} =
\begin{cases} \frac{1}{\ell+1}\left(\frac{g-\ell}{2} + 1\right),
& g \equiv \ell \pmod{2},\\
\frac{1}{\ell+1}\left(\frac{g-\ell+1}{2}\right), & g \not\equiv \ell \pmod{2}.
\end{cases}
\]
Also $a(\gamma, A)_{g} = \frac{1}{\ell+1}$ if $g \equiv \ell \pmod{2}$
and zero if $g \not\equiv \ell \pmod{2}$, so we can write
\[
b(\gamma, A)_{r} = \frac{1}{\ell+1}\frac{r(r+1)}{n-1}
\frac{g-\ell+1}{2} - \sum_{j=1}^{r-1}(r-j)a(\gamma, A)_{j}.
\]

By a similar case by case computation, one obtains
\begin{eqnarray*}
\sum_{j=1}^{r-1}(r-j)a(\gamma, A)_{j} &=&
\begin{cases} \left(\frac{r-\ell+1}{2}\right)^{2}, & r \not\equiv \ell \pmod{2} \mbox{ and } \ell \le r-1, \\
\frac{(r-\ell)(r-\ell+2)}{4}, & r \equiv \ell \pmod{2} \mbox{ and } \ell \le r-1,\\
0, &\ell > r-1\end{cases}\\
&=& \lceil \frac{r-\ell+1}{2}\rceil_{+} \lfloor \frac{r-\ell+1}{2}\rfloor_{+}.
\end{eqnarray*}
\end{proof}

\section{Higher level conformal block divisors and \quotientnames\ }
\label{s:Connection}
The main goal of this section is to prove Theorem \ref{main},
which says that when $n = 2g+2$ the divisors $\mathcal{D}_{\gamma, A} = \mathcal{D}_{\frac{\ell-1}{\ell+1}, (\frac{1}{\ell+1})^{2g+2}}$ and $\mathbb{D}(\sL_{2}, \ell, \omega_1^{2g+2})$ determine the same birational models. To prove this, we will show that $\mathbb{D}(\sL_{2}, \ell, \omega_1^{2g+2})$ and $\mathcal{D}_{\gamma, A}$ lie on the same face of the semi-ample cone.

To carry this out, we use a set of $\op{S}_n$-invariant  $\F$-curves, given in Definition \ref{basis}, which were shown in \cite[Proposition 4.1]{agss} to form a basis for $\op{Pic}(\ovop{M}_{0,n})^{\op{S}_n}$.
Using Theorem \ref{thm:intersectionFormula}, we obtained a simple formula for the intersection of these curves with $\mathcal{D}_{\gamma, A}$ in Corollary \ref{Jensen}.  We then show that
$\mathbb{D}(\sL_2,\ell,\omega_{1}^{2g+2})$ is equivalent to a nonnegative combination
of the divisors $\{ \Vqd{D}_{\frac{\ell+2k-1}{\ell+2k+1},(\frac{1}{\ell+2k+1})^{2g+2}} : k \in \Z_{\geq 0}, \ell + 2k \leq g \}$ (Corollary. \ref{poscomb}).
This follows from Proposition \ref{increasing} which shows that the nonzero intersection numbers
$\mathbb{D}(\sL_2,\ell,\omega_{1}^{2g+2}) \cdot F_{i}$ are nondecreasing.

\subsection{Intersection of $F_i$ with $\mathbb{D}(\sL_2,\ell,\omega_1^{2g+2})$ are nondecreasing}\label{increasing}
In this section we prove that the nonzero intersection numbers of $\mathbb{D}(\sL_2,\ell,\omega_1^{2g+2})$ with
the $S_n$ invariant $\F$-curves $F_{i}$ are nondecreasing.

We recall some notation from \cite{ags}.  (Some of this material
appeared in the first version of \cite{ags}, which is still posted on
the arXiv.  However, it was removed from the published version of that paper, and has not been published anywhere else.)

\begin{definition}
For a simple Lie algebra $\mathfrak{g}$, a nonnegative integer $\ell$, and a
sequence $\vec{\lambda} = (\lambda_{1}, \cdots, \lambda_{n})$ of dominant integral weights of $\mathfrak{g}$, let $\mathbb{V}(\mathfrak{g}, \ell, \vec{\lambda})$ be the vector bundle of conformal blocks on $\ovop{M}_{0,n}$. For the precise definition of vector bundle of conformal blocks, see Chapter 3 and 4 of \cite{Ueno}.
\end{definition}

In this paper, we focus on $\mathfrak{g} = \sL_{2}$ cases.

\begin{definition}\label{dfn:rankCBbdl}
We define
\[
r_{\ell}(a_{1}, \cdots, a_{n}) := \mathrm{rank}\; \mathbb{V}(\sL_{2}, \ell, (a_{1}\omega_{1}, \cdots, a_{n}\omega_{1}))
\]
and as a special case,
\[
r_{\ell}(k^{j},t) :=
\operatorname{rank}
\mathbb{V}(\sL_2, \ell, ( {\underset{\text{j times}}{\underbrace{k\omega_{1},\cdots,k\omega_{1}}}},t\omega_{1} )).
\]
\end{definition}

For the basic numerical properties of $r_{\ell}(a_{1}, \cdots, a_{n})$, see \cite[Section 3]{ags}.

\begin{proposition}\label{rankrecurrences}
The ranks $r_{\ell}(1^{j},t)$ are determined by the system of recurrences
\begin{equation}\label{Pascal}  r_{\ell}(1^{j},t)  =   r_{\ell}(1^{j-1},t-1) + r_{\ell}(1^{j-1},t+1),  \qquad t = 1,\cdots,\ell.
\end{equation}
together with seeds
$$r_{\ell}(1^{j},j)=1, \ \mbox{ if  } j \le \ell, \ \ \mbox{and } \ \ r_{\ell}(1^{j},j)=0, \ \mbox{ if  } j > \ell.$$
\end{proposition}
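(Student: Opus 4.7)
The approach is to apply the factorization theorem for vector bundles of conformal blocks to a nodal degeneration of the marked $\PP^1$, and then evaluate the resulting three-point ranks using the $\sL_2$ fusion rules.

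First I would invoke factorization at a node separating the first $j-1$ points (each of weight $\omega_1$) from the last two (of weights $\omega_1$ and $t\omega_1$). Since ranks of conformal blocks bundles are constant on $\ovop{M}_{0,j+1}$ and every $\sL_2$ weight is self-dual, this yields
\[
r_{\ell}(1^j, t) \;=\; \sum_{k=0}^{\ell} r_{\ell}(1^{j-1}, k) \cdot r_{\ell}(1, t, k).
\]
Next I would use the $\sL_2$ fusion rule at level $\ell$, namely that $r_{\ell}(a,b,c) = 1$ exactly when $a+b+c$ is even, $|a-b| \le c \le a+b$, and $a+b+c \le 2\ell$, and is zero otherwise. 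For $a = 1$ and $1 \le t \le \ell$, the only potentially nonzero summands come from $k = t-1$ and $k = t+1$, the latter being excluded precisely when $t = \ell$ by the level bound. Adopting the natural convention $r_{\ell}(1^{j-1}, t+1) = 0$ whenever $t + 1 > \ell$, both cases collapse into the single recurrence \eqref{Pascal} for all $t = 1, \ldots, \ell$.

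It remains to verify the seeds. For the base $j = 1$, the two-point rank $r_{\ell}(1, 1)$ equals $1$ whenever $\ell \ge 1$ (a two-point conformal block is one-dimensional exactly when the two weights are dual). For $1 \le j \le \ell$ the recurrence then gives
\[
r_{\ell}(1^j, j) = r_{\ell}(1^{j-1}, j-1) + r_{\ell}(1^{j-1}, j+1),
\]
where the second term vanishes because $(j+1)\omega_1$ exceeds the maximal weight $(j-1)\omega_1$ appearing in $(\omega_1)^{\otimes (j-1)}$; telescoping gives $r_{\ell}(1^j, j) = r_{\ell}(1, 1) = 1$. For $j > \ell$, the weight $j\omega_1$ lies outside the level-$\ell$ Weyl alcove, so $r_{\ell}(1^j, j) = 0$ by convention.

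The only genuine subtlety is the boundary case $t = \ell$, where the fusion rule truncates the would-be term at weight $\ell + 1$; the convention $r_{\ell}(1^{j-1}, \ell+1) = 0$ absorbs this so that the stated recurrence holds uniformly. Once the boundary truncation is handled, the proof is a direct application of factorization plus $\sL_2$ fusion.
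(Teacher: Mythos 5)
Your proof is correct and follows essentially the same approach as the paper's: factorize along the partition $1^{j-1} \cup (1,t)$ and use the $\sL_2$ level-$\ell$ fusion rules to reduce the sum over $\mu$ to the terms $\mu = t \pm 1$. The only differences are cosmetic—you fold the parity constraint directly into the three-point fusion rule (where the paper invokes a separate "odd sum rule" to kill the $\mu = t$ term), and you verify the seeds explicitly, which the paper takes as given.
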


\textit{Remark.} The recurrence (\ref{Pascal}) is somewhat reminiscent of the recurrence for Pascal's triangle.

\begin{proof}
Partition the weight vector $(1,\cdots,1,t)=1^{j}t$ as $1^{j-1} \cup
(1,t)$.  If $j +t$ is odd, then by the odd sum rule
(\cite{ags}*{Proposition 3.5}) $r_{\ell}(1^{j},t) =0$.  So assume $j+t$ is
even.  Then the factorization formula (\cite{ags}*{Proposition 3.3}) states
\begin{equation} \label{factorization}
r_{\ell}(1^{j},t) = \sum_{\mu =0}^{\ell} r_{\ell}(1^{j-1},\mu) r_{\ell}(1, t,  \mu).
\end{equation}
We can simplify this expression.  Recall that by the $\sL_2$ fusion
rules (\cite{ags}*{Proposition 3.4}), $r_{\ell}(1, t, \mu)$ is 0 if $\mu
>t+1$ or if $\mu < t-1$.  Thus the only possibly nonzero summands in
(\ref{factorization}) are when $\mu = t-1$, $t$, or $t+1$.  But when
$\mu = t$, by the odd sum rule, we have $r_{\ell}(1,t,t) =0$.  Thus (\ref{factorization}) simplifies to the following:
\begin{eqnarray*}
r_{\ell}(1^{j},t) & = &  r_{\ell}(1^{j-1},t-1) + r_{\ell}(1^{j-1},t+1) \qquad \qquad t = 1,\cdots,\ell-1;\\
r_{\ell}(1^{j},\ell) & = & r_{\ell}(1^{j-1},\ell-1).
\end{eqnarray*}
Since $r_{\ell}(1^{j-1},\ell+1) =0$, we can unify the two lines above, yielding (\ref{Pascal}).
\end{proof}

\begin{lemma} \label{positive determinants}
Let $i_1 < i_2$ and $j_1 < j_2$.  Suppose $i_1 \equiv i_2 \equiv j_1
\equiv j_2 \pmod{2}$.  Then $r_{\ell}(1^{i_1},j_1) r_{\ell}(1^{i_2},j_2) -
r_{\ell}(1^{i_1},j_2) r_{\ell}(1^{i_2},j_1) \geq 0 $.
\end{lemma}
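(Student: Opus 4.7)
My plan is to prove the inequality by a Lindström–Gessel–Viennot (LGV) argument, after reinterpreting the ranks as counts of lattice paths. Setting $f(i, j) := r_\ell(1^i, j)$, I first extend the recurrence of Proposition~\ref{rankrecurrences} uniformly to $0 \le j \le \ell$ by adopting the conventions $f(i, -1) = f(i, \ell + 1) = 0$: the recurrence at $j = \ell$ is already of this form, while at $j = 0$ it reduces to $f(i, 0) = f(i-1, 1)$, consistent with the propagation principle (inserting a weight-$0$ factor leaves the rank unchanged). Combined with the seeds $f(0, 0) = 1$ and $f(0, j) = 0$ for $j \ne 0$, a simple induction on $i$ identifies $f(i, j)$ with the number of lattice paths from $(0, j)$ to $(i, 0)$ built from steps $(1, \pm 1)$ and remaining inside the horizontal strip $\{0 \le y \le \ell\}$.

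Next I place sources $S_a = (0, j_a)$ and sinks $T_b = (i_b, 0)$ in this strip, so that the $2 \times 2$ matrix $M$ whose $(a,b)$-entry counts paths $S_a \to T_b$ has entries $M_{ab} = f(i_b, j_a)$ and determinant
\[
\det M \;=\; r_\ell(1^{i_1}, j_1)\, r_\ell(1^{i_2}, j_2) - r_\ell(1^{i_1}, j_2)\, r_\ell(1^{i_2}, j_1),
\]
which is exactly the quantity we wish to show is nonnegative. The LGV lemma writes $\det M = N_{\mathrm{id}} - N_{\mathrm{swap}}$, where $N_\sigma$ counts vertex-disjoint path pairs realizing the permutation $\sigma$ of the sinks.

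The decisive step is to show $N_{\mathrm{swap}} = 0$: any path $A$ from $S_1$ to $T_2$ and path $B$ from $S_2$ to $T_1$ must share a vertex. If $h_A(x), h_B(x)$ denote the heights of the two paths at integer horizontal coordinate $x$, then $h_A(0) - h_B(0) = j_1 - j_2 < 0$, while $h_A(i_1) - h_B(i_1) = h_A(i_1) \ge 0$ because $B$ terminates at $T_1 = (i_1, 0)$ and $A$ stays in the strip. The parity hypothesis $j_1 \equiv j_2 \pmod 2$ forces $h_A(x) - h_B(x)$ to remain even, and it changes by $0$ or $\pm 2$ at each step; a discrete intermediate-value argument then shows it vanishes at some $x^* \in \{0, 1, \ldots, i_1\}$, so $(x^*, h_A(x^*))$ is a common vertex of $A$ and $B$.

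The main obstacle I anticipate is justifying the lattice-path reformulation cleanly at the two strip boundaries $j = 0$ and $j = \ell$, i.e., verifying that the conventions $f(i, -1) = f(i, \ell + 1) = 0$ really do extend Proposition~\ref{rankrecurrences} and its seeds to the uniform recurrence above. Once this is in place, the LGV machinery together with the parity/IVT observation for swap pairs delivers $\det M = N_{\mathrm{id}} \ge 0$, which is the stated inequality.
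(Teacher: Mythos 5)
Your proof is correct, but it takes a genuinely different route from the paper. The paper proves the lemma by a direct induction on $i_2$: it applies the Pascal-type recurrence $r_\ell(1^i,t)=r_\ell(1^{i-1},t-1)+r_\ell(1^{i-1},t+1)$ to all four factors, expands the resulting product, and observes that the expansion regroups into four ``smaller'' $2\times 2$ determinants of the same form with $i_2$ replaced by $i_2-1$, each nonnegative by the induction hypothesis; the base cases $(i_1,i_2)\in\{(0,2),(1,3)\}$ are immediate from the vanishing $r_\ell(1^0,t)=0$ for $t>0$ and $r_\ell(1^1,t)=0$ for $t>1$. Your argument instead identifies $r_\ell(1^i,j)$ with a count of $(1,\pm1)$-step lattice paths in the strip $0\le y\le\ell$ (which does follow from the same recurrence and seeds once the conventions at $y=-1$ and $y=\ell+1$ are checked, as you do), recognizes the quantity in the lemma as the determinant of a $2\times 2$ path matrix, and then invokes Lindstr\"om--Gessel--Viennot together with a parity plus discrete intermediate-value argument to show that the swap term $N_{\mathrm{swap}}$ vanishes, so $\det M = N_{\mathrm{id}}\ge 0$. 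What your approach buys is a positive combinatorial interpretation: the determinant literally counts vertex-disjoint path pairs, so nonnegativity is manifest rather than emerging from an algebraic rearrangement; what the paper's approach buys is self-containment, needing nothing beyond the recurrence itself. Two small edge cases worth flagging in your write-up: if $j_2>\ell$ the source $(0,j_2)$ is not a vertex of the strip graph, but then $r_\ell(1^{i_b},j_2)=0$ for both $b$ and the determinant is $0$ trivially, so the LGV argument only needs to be run for $0\le j_1<j_2\le\ell$; and if $i_1=0$, the IVT window is a single point, but then a path $B$ from $(0,j_2)$ to $(0,0)$ would force $j_2=0$, contradicting $j_2>j_1\ge 0$, so $N_{\mathrm{swap}}=0$ there as well.
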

\begin{proof}
We prove the result by induction on $i_2$.  For the base case, we can
check $(i_1,i_2) = (0,2)$ and $(i_1,i_2) = (1,3)$.  If $(i_1,i_2) =
(0,2)$ the result is true since $r_{\ell}(t) = 0$ if $t > 0$.
Similarly if $(i_1,i_2) =(1,3)$ the result is true since
$r_{\ell}(1,t) =0 $ if $t > 1$.

So suppose the result has been established for all quadruples
$(i_1,i_2,j_1,j_2)$ with $i_2 \leq k-1$.  We apply the recurrence (\ref{Pascal}):
\begin{eqnarray*}
\lefteqn{r_{\ell}(1^{i_1},j_1) r_{\ell}(1^{i_2},j_2) - r_{\ell}(1^{i_1},j_2)
  r_{\ell}(1^{i_2},j_1) } & & \\
&=& \left( \rule{0pt}{14pt} r_{\ell}(1^{i_1-1},j_1-1) + r_{\ell}(1^{i_1-1},j_1+1)
\right) \left( \rule{0pt}{14pt} r_{\ell}(1^{i_2-1},j_2-1) + r_{\ell}(1^{i_2-1},j_2+1)
\right) \\
&& \hspace{0.5in} \mbox{} - \left( \rule{0pt}{14pt} r_{\ell}(1^{i_1-1},j_2-1) + r_{\ell}(1^{i_1-1},j_2+1)
\right) \left( \rule{0pt}{14pt} r_{\ell}(1^{i_2-1},j_1-1) + r_{\ell}(1^{i_2-1},j_1+1)
\right) \\
&=& r_{\ell}(1^{i_1-1},j_1-1) r_{\ell}(1^{i_2-1},j_2-1) - r_{\ell}(1^{i_1-1},j_2-1)
  r_{\ell}(1^{i_2-1},j_1-1) \\
&&  \hspace{0.5in} \mbox{} +  r_{\ell}(1^{i_1-1},j_1-1) r_{\ell}(1^{i_2-1},j_2+1) - r_{\ell}(1^{i_1-1},j_2+1)
  r_{\ell}(1^{i_2-1},j_1-1) \\
&&  \hspace{0.5in} \mbox{} +  r_{\ell}(1^{i_1-1},j_1+1) r_{\ell}(1^{i_2-1},j_2-1) - r_{\ell}(1^{i_1-1},j_2-1)
  r_{\ell}(1^{i_2-1},j_1+1) \\
&&  \hspace{0.5in} \mbox{} +  r_{\ell}(1^{i_1-1},j_1+1) r_{\ell}(1^{i_2-1},j_2+1) - r_{\ell}(1^{i_1-1},j_2+1)
  r_{\ell}(1^{i_2-1},j_1+1).
\end{eqnarray*}
By the induction hypothesis, each of the last four lines is
nonnegative.
\end{proof}

\begin{proposition}\label{increasing}
Suppose $\ell \leq i \leq g-2$ and $i \equiv \ell \pmod{2}$.  Then
$\mathbb{D}(\sL_2,\ell,\omega_{1}^{2g+2}) \cdot F_{i} \leq \mathbb{D}(\sL_2,\ell,\omega_{1}^{2g+2}) \cdot F_{i+2}$.  If, on the other hand, $i \equiv \ell +1 \pmod{2}$, then $\mathbb{D}(\sL_2,\ell,\omega_{1}^{2g+2}) \cdot F_{i} = 0$.
\end{proposition}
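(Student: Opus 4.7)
The plan is to apply Fakhruddin's factorization formula for intersecting a conformal block divisor with an $\F$-curve, and then use the recurrence of Proposition \ref{rankrecurrences} together with Lemma \ref{positive determinants} to reduce the monotonicity to a sum of products of nonnegative factors. For $F_i = F_{1,1,i,2g-i}$, at each singleton leg the $\sL_2$ fusion rule forces (via $r_\ell(1,\mu) = \delta_{\mu,1}$) the node weight to equal $\omega_1$, so Fakhruddin's formula collapses to
\[
\mathbb{D}(\sL_2, \ell, \omega_1^{2g+2}) \cdot F_i \;=\; \sum_{\mu,\nu=0}^{\ell} r_\ell(1^i,\mu)\, r_\ell(1^{2g-i},\nu)\, d_\ell(\mu,\nu),
\]
where $d_\ell(\mu,\nu) := \deg_{\ovop{M}_{0,4}}\mathbb{V}(\sL_2,\ell,(\omega_1,\omega_1,\mu\omega_1,\nu\omega_1))$.

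For the parity vanishing, if $i \equiv \ell+1 \pmod 2$, the odd sum rule forces $r_\ell(1^i,\mu) = 0$ unless $\mu \equiv i \pmod 2$; combined with $\mu \le \ell$, this restricts $\mu$ to values of parity opposite to $\ell$, and similarly for $\nu$. A direct computation using the $\sL_2$ fusion decomposition $\omega_1 \otimes \omega_1 = 0 \oplus 2\omega_1$ at the central node of $\ovop{M}_{0,4}$ shows $d_\ell(\mu,\nu) = 0$ on this restricted parity range, so every summand vanishes.

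For the monotonicity statement, expanding both $r_\ell(1^{i+2},\mu) = r_\ell(1^i,\mu-2) + 2r_\ell(1^i,\mu) + r_\ell(1^i,\mu+2)$ and $r_\ell(1^{2g-i},\nu) = r_\ell(1^{2g-i-2},\nu-2) + 2r_\ell(1^{2g-i-2},\nu) + r_\ell(1^{2g-i-2},\nu+2)$ (applying Proposition \ref{rankrecurrences} twice), the two diagonal $2 r_\ell(1^i,\mu)r_\ell(1^{2g-i-2},\nu)$ contributions cancel; after reindexing one obtains
\[
\mathbb{D}\cdot F_{i+2} - \mathbb{D}\cdot F_i \;=\; \sum_{\mu,\nu} r_\ell(1^i,\mu)\, r_\ell(1^{2g-i-2},\nu)\, T(\mu,\nu),
\]
with $T(\mu,\nu) := d_\ell(\mu+2,\nu) + d_\ell(\mu-2,\nu) - d_\ell(\mu,\nu+2) - d_\ell(\mu,\nu-2)$. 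The symmetry $d_\ell(\mu,\nu) = d_\ell(\nu,\mu)$ yields $T(\nu,\mu) = -T(\mu,\nu)$, so symmetrizing in $(\mu,\nu)$ rewrites the difference as
\[
\sum_{\mu<\nu} \bigl[r_\ell(1^i,\mu)\, r_\ell(1^{2g-i-2},\nu) - r_\ell(1^i,\nu)\, r_\ell(1^{2g-i-2},\mu)\bigr]\, T(\mu,\nu).
\]
Since $i \le g-2$ gives $i < 2g-i-2$, and the hypothesis $i \equiv \ell \pmod 2$ combined with the odd-sum rule forces the nonzero $\mu,\nu$ to share a common parity, Lemma \ref{positive determinants} shows the bracketed factor is nonnegative.

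The main obstacle is establishing $T(\mu,\nu) \ge 0$ for $\mu < \nu$ of matching parity. This reduces to an explicit analysis of $d_\ell(\mu,\nu)$ via Fakhruddin's first Chern class formula on $\ovop{M}_{0,4}$, combined with the $\sL_2$ fusion data — in effect, an interplay between Casimir contributions and the channel decompositions through the three boundary points of $\ovop{M}_{0,4} \cong \PP^1$. Once this inequality is verified, each summand is the product of two nonnegative factors, yielding $\mathbb{D}(\sL_2,\ell,\omega_1^{2g+2}) \cdot F_{i+2} \ge \mathbb{D}(\sL_2,\ell,\omega_1^{2g+2}) \cdot F_i$.
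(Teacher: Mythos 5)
Your proposal starts from the general factorization formula and therefore carries along a double sum over node weights $(\mu,\nu)$ at the two non-singleton legs, with coefficients $d_\ell(\mu,\nu) = \deg\mathbb{V}(\sL_2,\ell,(\omega_1,\omega_1,\mu\omega_1,\nu\omega_1))$. The paper instead invokes \cite[Theorem 4.2]{ags}, which already collapses this sum to the single term $\mathbb{D}(\sL_2,\ell,\omega_1^{2g+2})\cdot F_i = r_\ell(1^i,\ell)\,r_\ell(1^{n-i-2},\ell)$ — in effect, $d_\ell(\mu,\nu)$ vanishes unless $\mu=\nu=\ell$. From there the paper applies the recurrence (\ref{Pascal}) a few times and Lemma \ref{positive determinants} once, and is done. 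Your recurrence/reindexing/antisymmetrization machinery is set up correctly, and the appeal to Lemma \ref{positive determinants} for the bracketed two-by-two determinant is valid under exactly the stated parity and size hypotheses; that much is a genuinely different and somewhat elegant reorganization.

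But your argument has a genuine gap, which you yourself flag: the inequality $T(\mu,\nu)\ge 0$ for $\mu<\nu$ of matching parity is asserted as "the main obstacle," not proved. Nothing in your proposal pins it down, and it is not a triviality — it amounts to knowing the values $d_\ell(\mu,\nu)$, which is precisely the information encoded in the cited theorem from \cite{ags} that you are trying to route around. The same issue infects your vanishing argument: you claim "a direct computation \dots\ shows $d_\ell(\mu,\nu)=0$ on this restricted parity range" without supplying it, whereas the paper gets vanishing for free once the intersection number is a single product, because the odd sum rule kills $r_\ell(1^i,\ell)$ when $i\equiv\ell+1\pmod 2$. Without a proof that $d_\ell(\mu,\nu)$ is supported only on $(\ell,\ell)$ (or some equivalent statement controlling $T$), your proof does not close. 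The honest fix is to either import \cite[Theorem 4.2]{ags} as the paper does, or to prove the four-point degree formula directly from \cite[Proposition 4.2]{Fakh}, at which point the double sum collapses and the elaborate symmetrization becomes unnecessary.
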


\begin{proof}
By \cite[Theorem 4.2]{ags} we have
$\mathbb{D}(\sL_{2},\ell,\omega_{1}^{2g+2}) \cdot F_{i}=r_{\ell}(1^{i},\ell) r_{\ell}(1^{n-i-2},\ell)$.  By the odd sum rule we see that $r_{\ell}(1^{i},\ell)=0$ if $i \equiv \ell +1 \pmod{2}$.

In the remaining cases, we seek to show that
\[ r_{\ell}(1^{i+2},\ell) r_{\ell}(1^{n-i-4},\ell) - r_{\ell}(1^{i},\ell) r_{\ell}(1^{n-i-2},\ell) \geq 0.
\]

We apply the recurrence (\ref{Pascal}) and use $r_{\ell}(1^{j},t)= 0$ if $t
> \ell$ to obtain
\begin{eqnarray*}
\lefteqn{r_{\ell}(1^{i+2},\ell) r_{\ell}(1^{n-i-4},\ell) - r_{\ell}(1^{i},\ell)
r_{\ell}(1^{n-i-2},\ell)} && \\
& = &  \left( \rule{0pt}{14pt} r_{\ell}(1^{i+1},\ell-1)  + r_{\ell}(1^{i+1},\ell+1) \right)
r_{\ell}(1^{n-i-4},\ell) \\
&& \hspace{0.5in} \mbox{} - r_{\ell}(1^{i},\ell) \left( \rule{0pt}{14pt}  r_{\ell}(1^{n-i-3},\ell-1)  +
r_{\ell}(1^{n-i-3},\ell+1) \right) \\
& = &  r_{\ell}(1^{i+1},\ell-1)r_{\ell}(1^{n-i-4},\ell) - r_{\ell}(1^{i},\ell) r_{\ell}(1^{n-i-3},\ell-1) \\
& = &  \left( \rule{0pt}{14pt} r_{\ell}(1^{i},\ell-2) +   r_{\ell}(1^{i},\ell)
\right) r_{\ell}(1^{n-i-4},\ell) -  r_{\ell}(1^{i},\ell) \left( \rule{0pt}{14pt}
  r_{\ell}(1^{n-i-4},\ell-2) +  r_{\ell}(1^{n-i-2},\ell)   \right) \\
& = & r_{\ell}(1^{i},\ell-2) r_{\ell}(1^{n-i-4},\ell) - r_{\ell}(1^{i},\ell)
r_{\ell}(1^{n-i-4},\ell-2).
\end{eqnarray*}

By Lemma \ref{positive determinants}, we have $ r_{\ell}(1^{i},\ell-2)
r_{\ell}(1^{n-i-4},\ell) - r_{\ell}(1^{i},\ell)r_{\ell}(1^{n-i-4},\ell-2) \geq 0$.
\end{proof}

\begin{corollary}\label{poscomb}
The divisor $\mathbb{D}(\sL_2,\ell,\omega_{1}^{2g+2})$ is a nonnegative linear combination
of the divisors $\{ \Vqd{D}_{\frac{\ell+2k-1}{\ell+2k+1},(\frac{1}{\ell+2k+1})^{2g+2}} : k \in \Z_{\geq 0}, \ell + 2k \leq g \}$.  Moreover, the coefficient of $\Vqd{D}_{\frac{\ell-1}{\ell+1},(\frac{1}{\ell+1})^{2g+2}}$ in this expression is strictly positive.
\end{corollary}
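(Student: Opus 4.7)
The plan is to reduce the claim to an intersection-theoretic comparison on the basis $\{F_j\}_{j=1}^g$ of $\op{S}_n$-invariant $\op{F}$-curves (Definition \ref{basis}). Both sides of the proposed expansion are $\op{S}_n$-invariant divisors, and the pairing between $\Pic(\Mzn)^{\op{S}_n}$ and $\op{N}_1(\Mzn)^{\op{S}_n}$ is perfect (both sides have dimension $g$, and the pairing matrix coming from Corollary \ref{class} is triangular with nonzero diagonal), so it suffices to verify the claimed identity numerically on each $F_j$.

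First I would assemble the intersection table. From Corollary \ref{Jensen}, the divisor $D_k := \Vqd{D}_{\frac{\ell+2k-1}{\ell+2k+1},(\frac{1}{\ell+2k+1})^{2g+2}}$ pairs with $F_j$ to give $1/(\ell+2k+1)$ precisely when $j \equiv \ell \pmod 2$ and $j \geq \ell + 2k$, and zero otherwise. From \cite[Theorem 4.2]{ags}, $\mathbb{D}(\sL_2,\ell,\omega_1^{2g+2}) \cdot F_j = r_\ell(1^j,\ell)\, r_\ell(1^{n-j-2},\ell)$, which vanishes when $j \not\equiv \ell \pmod 2$ (by the odd sum rule) or when $j < \ell$ (by the $\sL_2$ fusion rules). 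Hence both sides pair trivially with every $F_j$ whose index lies outside the set $S := \{\ell,\ell+2,\ldots\} \cap \{1,\ldots,g\}$, and the verification reduces to matching intersections with $F_{\ell+2m}$ for $m = 0, 1, \ldots, M := \lfloor(g-\ell)/2\rfloor$.

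Setting $a_m := \mathbb{D}(\sL_2,\ell,\omega_1^{2g+2}) \cdot F_{\ell+2m}$, the equation $\sum_{k} c_k D_k = \mathbb{D}(\sL_2,\ell,\omega_1^{2g+2})$ translates into the lower-triangular linear system
\[
\sum_{k=0}^{m} \frac{c_k}{\ell+2k+1} = a_m, \qquad m = 0, 1, \ldots, M,
\]
whose unique solution (by forward substitution) is $c_m = (\ell+2m+1)(a_m - a_{m-1})$, with the convention $a_{-1} := 0$. The desired nonnegativity $c_m \geq 0$ is then precisely the monotonicity statement of Proposition \ref{increasing}.

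For strict positivity of $c_0 = (\ell+1)\, a_0$, it suffices to show $a_0 = r_\ell(1^\ell,\ell)\cdot r_\ell(1^{2g-\ell},\ell) > 0$: the first factor is $1$ by the seed in Proposition \ref{rankrecurrences}, and the second is positive because $2g-\ell \geq \ell$ together with $2g-\ell \equiv \ell \pmod 2$ allow iterated application of the recurrence \eqref{Pascal} to reduce $r_\ell(1^{2g-\ell},\ell)$ to a sum of nonnegative terms that includes $r_\ell(1^\ell,\ell) = 1$. The only substantive input is the monotonicity in Proposition \ref{increasing}, which has already been established; the remainder of the corollary is essentially a linear-algebra packaging of that result.
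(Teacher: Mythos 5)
Your proof is correct and fills in exactly the argument the paper leaves implicit (the paper's proof is just one sentence citing Proposition~\ref{increasing} and Corollary~\ref{Jensen}): you set up the triangular linear system on the $\op{S}_n$-invariant $\F$-curve basis, solve by forward substitution to get $c_m = (\ell+2m+1)(a_m - a_{m-1})$, and read off nonnegativity from the monotonicity of the $a_m$ and strict positivity of $c_0$ from $a_0 = r_\ell(1^\ell,\ell)\,r_\ell(1^{2g-\ell},\ell) > 0$. This is the same route the authors intended.
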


\begin{proof}
This follows from Proposition \ref{increasing} and the intersection numbers computed in Corollary \ref{Jensen}.
\end{proof}

\subsection{Morphisms associated to conformal blocks divisors 
}
We are now in a position where we can prove that the divisors $\mathbb{D}(\sL_2,\ell,\omega_1^{2g+2})$ give maps to \quotientnames.

\begin{theorem}\label{main}
The conformal blocks divisor $\mathbb{D}(\sL_2,\ell,\omega_1^{2g+2})$ on $\ovop{M}_{0,2g+2}$ for $1 \le \ell \le g$ is the pullback of an ample class via the morphism
\[
\varphi_{\frac{\ell-1}{\ell+1}, A}=\phi_{\frac{\ell-1}{\ell+1}}\circ \rho_{A}:\ovop{M}_{0,n} \overset{\rho_{A}}{\longrightarrow} \ovop{M}_{0,A} \overset{\phi_{\frac{\ell-1}{\ell+1}}}{\longrightarrow} \Udnga{g+1-\ell}{\frac{\ell-1}{\ell+1}}{A},
\]
where  $A=(\frac{1}{\ell+1})^{2g+2}$, and $\rho_{A}$ is the contraction to Hassett's moduli space $\ovop{M}_{0,A}$
of stable weighted pointed rational curves. \end{theorem}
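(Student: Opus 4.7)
My plan is to combine the decomposition of $\mathbb{D}(\sL_2,\ell,\omega_1^{2g+2})$ into Veronese quotient divisors provided by Corollary \ref{poscomb} with the morphisms between Veronese quotients supplied by Proposition \ref{prop:morphism} and Corollary \ref{phiell}, so that every piece descends to a single target $\Udnga{g+1-\ell}{\frac{\ell-1}{\ell+1}}{A}$ where the sum can be recognized as ample.

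First I would invoke Corollary \ref{poscomb} to write
\[
\mathbb{D}(\sL_2,\ell,\omega_1^{2g+2}) \equiv c_0\,\Vqd{D}_{\frac{\ell-1}{\ell+1},(\frac{1}{\ell+1})^{2g+2}} + \sum_{\substack{k \ge 1 \\ \ell+2k \le g}} c_k\,\Vqd{D}_{\frac{\ell+2k-1}{\ell+2k+1},(\frac{1}{\ell+2k+1})^{2g+2}},
\]
with all $c_k \ge 0$ and $c_0 > 0$. By Definition \ref{def:DgammaA}, each summand $\Vqd{D}_{\frac{\ell+2k-1}{\ell+2k+1},(\frac{1}{\ell+2k+1})^{2g+2}}$ is the pullback of the natural ample class $\overline{L}_k$ on $\Udnga{g+1-\ell-2k}{\frac{\ell+2k-1}{\ell+2k+1}}{(\frac{1}{\ell+2k+1})^{2g+2}}$ via the morphism $\varphi_{\frac{\ell+2k-1}{\ell+2k+1},(\frac{1}{\ell+2k+1})^{2g+2}}$ from $\ovop{M}_{0,n}$.

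Next I would iterate Proposition \ref{prop:morphism} (using Corollary \ref{phiell} at the boundary levels $\ell+2k\in\{g-1,g\}$) to produce, for each $k \ge 1$ with $\ell + 2k \le g$, a morphism
\[
\eta_k : \Udnga{g+1-\ell}{\frac{\ell-1}{\ell+1}}{(\frac{1}{\ell+1})^{2g+2}} \longrightarrow \Udnga{g+1-\ell-2k}{\frac{\ell+2k-1}{\ell+2k+1}}{(\frac{1}{\ell+2k+1})^{2g+2}}
\]
satisfying $\eta_k \circ \varphi_{\frac{\ell-1}{\ell+1},A} = \varphi_{\frac{\ell+2k-1}{\ell+2k+1},(\frac{1}{\ell+2k+1})^{2g+2}}$. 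Setting $M_k := \eta_k^* \overline{L}_k$ then gives a nef class on $\Udnga{g+1-\ell}{\frac{\ell-1}{\ell+1}}{A}$ with $\varphi_{\frac{\ell-1}{\ell+1},A}^* M_k = \Vqd{D}_{\frac{\ell+2k-1}{\ell+2k+1},(\frac{1}{\ell+2k+1})^{2g+2}}$.

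Assembling the pieces, and letting $\overline{L}$ denote the natural ample class on $\Udnga{g+1-\ell}{\frac{\ell-1}{\ell+1}}{A}$ itself, one obtains
\[
\mathbb{D}(\sL_2,\ell,\omega_1^{2g+2}) \equiv \varphi_{\frac{\ell-1}{\ell+1},A}^*\Bigl(c_0\,\overline{L} + \sum_{k\ge 1} c_k M_k\Bigr).
\]
Since $c_0 > 0$ and each $M_k$ is nef, the bracketed class is the sum of an ample class and a nef class, hence ample, which proves the theorem. The extreme levels $\ell \in \{1,g\}$ are either covered by the same argument (the sum in Corollary \ref{poscomb} collapses to a single term when $\ell=g$) or follow directly from the proportionality $\mathbb{D}(\sL_2,\ell,\omega_1^{2g+2}) \propto \Vqd{D}_{\frac{\ell-1}{\ell+1},(\frac{1}{\ell+1})^{2g+2}}$ already established in \cite{ags}.

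The main obstacle is constructing the iterated morphisms $\eta_k$ at the level of the Veronese quotients themselves (rather than through their Hassett intermediaries with differing $\epsilon$'s) and verifying the commutativity $\eta_k \circ \varphi_{\frac{\ell-1}{\ell+1},A} = \varphi_{\frac{\ell+2k-1}{\ell+2k+1},(\frac{1}{\ell+2k+1})^{2g+2}}$. This requires chaining the $\psi_{\ell+2j,\ell+2j+2}$'s together while tracking that each $\psi$ factors through the corresponding $\tau$ by Proposition \ref{prop:morphism}, and then applying the rigidity lemma argument used in the proof of Proposition \ref{prop:morphism} to descend the composition to the target Veronese quotient; the edge case $\ell+2k = g$ must be handled separately through the reduction to $(\PP^1)^{2g+2}\git\SL(2)$ as in Corollary \ref{phiell}.
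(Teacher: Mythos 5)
Your proposal follows essentially the same route as the paper: decompose $\mathbb{D}(\sL_2,\ell,\omega_1^{2g+2})$ into a nonnegative combination of the $\Vqd{D}_{\frac{\ell+2k-1}{\ell+2k+1},(\frac{1}{\ell+2k+1})^{2g+2}}$ via Corollary~\ref{poscomb}, push all terms down to $\Udnga{g+1-\ell}{\frac{\ell-1}{\ell+1}}{A}$ through the factorizations supplied by Proposition~\ref{prop:morphism}, and conclude ampleness from the strict positivity of the leading coefficient, with the small levels dispatched by direct proportionality.

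One small edge case to tidy up: Proposition~\ref{prop:morphism} is only stated for $3 \le \ell \le g-2$, so your chaining of the $\psi_{\ell+2j,\ell+2j+2}$'s does not literally apply when $\ell = 2$, and you list only $\ell \in \{1,g\}$ among the levels handled by proportionality. The paper instead handles both $\ell = 1$ \emph{and} $\ell = 2$ directly, observing via Corollary~\ref{Jensen} together with \cite[Corollary 4.7 and 4.9]{ags} that $\mathbb{D}(\sL_2,\ell,\omega_1^{2g+2}) \equiv \Vqd{D}_{\frac{\ell-1}{\ell+1},(\frac{1}{\ell+1})^{2g+2}}$ in those two cases (so no morphisms $\eta_k$ are needed), and runs the Corollary~\ref{poscomb} / Proposition~\ref{prop:morphism} argument only for $\ell \ge 3$. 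If you add $\ell = 2$ to the list of levels handled by direct proportionality, the argument is complete and agrees with the paper's.
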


\begin{proof}
By \cite[Corollary 4.7 and 4.9]{ags} and Corollary \ref{Jensen},
$\Vqd{D}_{\frac{\ell-1}{\ell+1},(\frac{1}{\ell+1})^{2g+2}} \equiv \mathbb{D}(\mathfrak{sl}_{2},\ell, \omega_1^{2g+2})$ if $\ell = 1, 2$.
When $\ell \ge 3$, by Corollary \ref{poscomb},
$\mathbb{D}(\mathfrak{sl}_{2},\ell, \omega_1^{2g+2})$
is a non-negative linear combination of $\Vqd{D}_{\frac{\ell+2k-1}{\ell+2k+1},(\frac{1}{\ell+2k+1})^{2g+2}}$
where $k \in \ZZ_{\ge 0}$ and $\ell + 2k \le g$.
In the latter case, by Proposition \ref{prop:morphism}, we see that all of the divisors in this non-negative linear combination are pullbacks of semi-ample divisors from $\Udnga{g+1-\ell}{\frac{\ell-1}{\ell+1}}{A}$.  Moreover, one of them is ample, and it appears with strictly positive coefficient.  The result follows.
\end{proof}

\begin{remark}\label{rmk:sl2criticallevel}
We note that, for a sequence of dominant integral weights $(k_{1}\omega_{1}, \cdots, k_{n}\omega_{1})$ of $\sL_{2}$, the integer $(\sum_{i=1}^{n}\frac{k_{i}}{2}) - 1$ is called the {\bf critical level} $c\ell$. By \cite[Lemma 4.1]{Fakh}, if $\ell > c\ell$, then $\mathbb{D}(\sL_{2}, \ell, (k_{1}\omega_{1}, \cdots, k_{n}\omega_{1})) \equiv 0$, so the corresponding morphism is a constant map.

When $k_{1} = \cdots = k_{n} = 1$, the critical level is equal to $g$.
Thus it is sufficient to study the cases $1 \le \ell \le g$.  Therefore Theorem \ref{main} is a complete answer for the Lie algebra $\sL_{2}$ and weight data $\omega_{1}^{2g+2}$ cases.
\end{remark}

\section{Conjectural generalizations}\label{conjectures}

Numerical evidence suggests that the connection between \quotientnames\ and type A conformal block divisors holds in a more general setting.  In this section, we provide some of this evidence and make a few conjectures.

 \subsection{$\sL_{2}$ cases}\label{5a}

In this section, we consider $\sL_{2}$ symmetric weight cases, i.e,
$\mathbb{D}(\sL_{2}, \ell, k\omega_{1}^{n})$ for
$1 \le k \le \ell$. Theorem \ref{main} tells us that when $k = 1$, the associated birational models are \quotientnames.  Before we can predict the birational models associated to other conformal block divisors, we need the following useful lemma.

\begin{lemma}\cite[Lemma 3.16]{ags}\label{lem:nonzeroCBspace}
The rank $r_{\ell}(a_{1}, \cdots, a_{n}) \neq 0$ if and only if $\Lambda = \sum_{i=1}^{n} a_i$ is even and, for any subset $I \subset \{ 1, \ldots n \}$ with $n - \vert I \vert$ odd, we have
\[
\Lambda - (n-1) \ell \leq \sum_{i \in I} (2 a_i - \ell ) .
\]
\end{lemma}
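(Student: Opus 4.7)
The plan is to argue both implications by induction on $n$, exploiting the factorization formula
\[
r_\ell(a_1,\ldots,a_n)=\sum_{\mu=0}^{\ell} r_\ell(a_1,\ldots,a_{n-2},\mu)\, r_\ell(a_{n-1},a_n,\mu)
\]
together with the $\sL_2$ fusion rules, which characterize $r_\ell(a,b,c)\ne 0$ by evenness of $a+b+c$, the three triangle inequalities, and the level bound $a+b+c\le 2\ell$. The weights are tacitly assumed dominant integral of level $\ell$, i.e.\ $0\le a_i\le\ell$, since otherwise $r_\ell=0$ and the asserted inequality for $I=\{1,\ldots,n\}\setminus\{i\}$ already fails. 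For the base case $n=3$, the subsets $I$ with $3-|I|$ odd are exactly $\emptyset$ (giving $\Lambda\le 2\ell$) and the pairs $\{j,k\}$ (giving $a_i\le a_j+a_k$), so the listed inequalities match the fusion conditions term by term.

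For the necessity direction, the evenness of $\Lambda$ is the odd-sum rule, while each remaining inequality is obtained by factoring. Given a proper nonempty subset $I$ with $|I^c|$ odd, factor at the partition $I\sqcup I^c$ to produce some $\mu\in\{0,\ldots,\ell\}$ making both factors nonzero. Applying the induction hypothesis to the $(|I^c|+1)$-weight block with sub-subset $\{*_\mu\}$ (its complement $I^c$ has odd size $|I^c|$) yields $\sum_{i\in I^c}a_i-\mu\le(|I^c|-1)\ell$, while the $(|I|+1)$-weight block with sub-subset $I$ yields $\mu\le\sum_{i\in I}a_i$; combining and rearranging produces $\Lambda-(n-1)\ell\le\sum_{i\in I}(2a_i-\ell)$. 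The edge case $I=\emptyset$ (only needed when $n$ is odd) is handled instead by factoring at the pair $\{n-1,n\}$, applying induction to obtain $\sum_{i=1}^{n-2}a_i-\mu\le(n-3)\ell$ via the sub-subset $\{*_\mu\}$, and combining with the fusion bound $\mu\le 2\ell-a_{n-1}-a_n$ to conclude $\Lambda\le(n-1)\ell$.

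For sufficiency, assume the parity condition and every asserted inequality. Factor at the pair $\{n-1,n\}$: the integers $\mu$ with $r_\ell(a_{n-1},a_n,\mu)\ne 0$ form an arithmetic progression of step $2$ from $|a_{n-1}-a_n|$ to $\min(a_{n-1}+a_n,\,2\ell-a_{n-1}-a_n)$, nonempty since each $a_i\le\ell$. A valid $\mu$ must additionally lie in the intersection of all the inductive bounds for the reduced tuple $(a_1,\ldots,a_{n-2},\mu)$; each such bound, indexed by $I'\subseteq\{1,\ldots,n-2,*_\mu\}$ with $(n-1)-|I'|$ odd, is linear in $\mu$ and becomes either an upper bound (when $*_\mu\notin I'$) or a lower bound (when $*_\mu\in I'$). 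The main obstacle is verifying that the resulting overall interval is nonempty. One establishes this by pairing bounds: any two upper (or two lower) inductive bounds compare trivially or via $a_i\le\ell$, and each crossing of a fusion endpoint with an inductive bound corresponds, through the dictionary
\[
I'=J\subseteq\{1,\ldots,n-2\}\;\longleftrightarrow\;I=J\cup\{n\}\text{ or }J\cup\{n-1\},\qquad I'=K\cup\{*_\mu\}\;\longleftrightarrow\;I=K\text{ or }K\cup\{n-1,n\},
\]
to exactly one of the originally assumed inequalities, with the requisite parities $n-|I|$ odd checked case by case. Parity of the reduced tuple matches automatically since $\Lambda$ is even.
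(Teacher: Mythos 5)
First, note that the paper does not prove this lemma: it is cited verbatim from \cite[Lemma~3.16]{ags}, so there is no internal proof to compare against. On its own merits, your inductive strategy via the factorization formula and the $\sL_2$ fusion rules is the natural one, and the base case $n=3$ and the necessity direction (including the edge case $I=\emptyset$ for odd $n$) are carried out correctly.

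The sufficiency direction, however, has a genuine gap in the key ``interval is nonempty'' step. For a single real variable $\mu$ constrained by several lower bounds $L_K$ (from $I'\ni *_\mu$), several upper bounds $U_J$ (from $*_\mu\notin I'$), and the two fusion endpoints, nonemptiness is exactly the condition that \emph{every lower bound is $\le$ every upper bound}. Your ``pairing'' list covers fusion-vs-fusion, fusion-vs-inductive, and ``two upper (or two lower) inductive bounds'' --- but comparing two upper bounds, or two lower bounds, with each other is irrelevant to interval nonemptiness, and the comparison that \emph{is} needed, namely an inductive lower bound $L_K$ against an inductive upper bound $U_J$, is never addressed. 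That comparison does hold: writing $P=\{1,\ldots,n-2\}\setminus(K\cup J)$ and $Q=K\cap J$, the inequality $L_K\le U_J$ rearranges to $\sum_{i\in P}(2a_i-\ell)-\sum_{i\in Q}(2a_i-\ell)\le (n-3)\ell$; since $P$ and $Q$ are disjoint and (because $|K|\not\equiv |J| \pmod 2$ forces $K\ne J$) satisfy $|P|+|Q|\le n-3$, this follows from $0\le a_i\le\ell$ alone. You should state this explicitly rather than the irrelevant upper-upper and lower-lower comparisons. Separately, you assert that ``parity of the reduced tuple matches automatically,'' but since the valid $\mu$'s form a step-$2$ progression, you also need the interval $[\max L_K,\min U_J]$ to contain an integer of the correct residue; this is true because each $L_K$, each $U_J$, and both fusion endpoints are congruent to $\sum_{i=1}^{n-2}a_i\pmod 2$ (using $|K|\equiv n-1$, $|J|\equiv n\pmod 2$ and $\Lambda$ even), but it is a claim that must be checked, not one that is ``automatic.''
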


Note that for a given weight datum, the left-hand side of this expression is fixed, while the right-hand side is minimized by summing over all weights such that $2 a_i < \ell$.

The next result shows that when $k = \ell$, we get the same birational model as in the case of $k = 1$.

\begin{proposition}
We have the following equalities between conformal block divisors:
\[
\mathbb{D} (\sL_2 , \ell , \ell\omega_{1}^{n}) = \ell \mathbb{D} (\sL_2 , 1, \omega_{1}^{n})=\frac{\ell}{k}\mathbb{D} (\sL_{2 k} , 1, \omega_{k}^{n}).
\]
\end{proposition}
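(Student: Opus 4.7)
My plan is to verify the equalities by direct application of Fakhruddin's explicit formula for $c_{1}(\mathbb{V}(\mathfrak{g}, \ell, \vec{\lambda}))$ as a $\QQ$-combination of $\psi$-classes and boundary divisors on $\ovop{M}_{0,n}$ (cf.\ \cite{Fakh}). The essential simplification is that each of the three bundles in the statement is a line bundle, so the formula collapses to a numerical identity coefficient by coefficient.

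I would begin by verifying the rank-one assertion for each bundle when $n$ is even (both sides trivially vanish for $n$ odd by the parity criterion of Lemma \ref{lem:nonzeroCBspace}). For $\mathbb{V}(\sL_{2}, \ell, (\ell\omega_{1})^{n})$, the simple-current identity $r_{\ell}(\ell\omega_{1}, \lambda, \vec{\mu}) = r_{\ell}(\ell\omega_{1} - \lambda, \vec{\mu})$---an immediate consequence of the $\sL_{2}$ fusion rule used in Proposition \ref{rankrecurrences}---iteratively reduces $r_{\ell}((\ell\omega_{1})^{n})$ to $r_{\ell}((\ell\omega_{1})^{n-2})$ and hence to $1$. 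The other two bundles are rank one by the standard level-one type-A fusion rules. I would then compute Fakhruddin's coefficients side-by-side for the three triples $(\sL_{2}, 1, \omega_{1})$, $(\sL_{2}, \ell, \ell\omega_{1})$, and $(\sL_{2k}, 1, \omega_{k})$: the $\psi$-coefficient $(\lambda, \lambda + 2\rho)/(2(\ell + h^{\vee}))$ evaluates to $1/4$, $\ell/4$, and $k/4$ respectively, using $h^{\vee} = 2, 2, 2k$ and the standard invariant inner product (so $(\omega_{1}, \omega_{1}) = 1/2$ for $\sL_{2}$ and $(\omega_{k}, \omega_{k}) = k/2$ for $\sL_{2k}$). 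For each boundary class $\delta_{I}$, the sum $\sum_{\mu} r(\lambda^{|I|}, \mu)\, r(\lambda^{|I^{c}|}, \mu^{*})(\mu, \mu + 2\rho)/(2(\ell + h^{\vee}))$ collapses onto a single nontrivial weight by the fusion/propagation rules ($\mu$ is forced to be either $0$, which contributes nothing, or the distinguished weight, which contributes exactly $1/4$, $\ell/4$, $k/4$ respectively), and vanishes for $|I|$ even. So every coefficient in the expansion of $c_{1}$ scales by $\ell$ (respectively by $k$), yielding both claimed equalities.

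The hardest part is really the middle step: verifying that the boundary sum actually collapses onto the two-element set $\{0, \lambda\}$ in each of the three cases. This is exactly the simple-current property of $\ell\omega_{1}$ for $\sL_{2}$ at level $\ell$ and of $\omega_{k}$ for $\sL_{2k}$ at level one, which needs to be confirmed separately in each setting. Once this is in place, the remaining work is a normalization bookkeeping across the three Lie algebras, which is short and mechanical.
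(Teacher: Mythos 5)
Your proposal is correct, but it takes a genuinely different route from the paper's. The paper establishes the second equality $\ell\,\mathbb{D}(\sL_2,1,\omega_1^n) = \frac{\ell}{k}\mathbb{D}(\sL_{2k},1,\omega_k^n)$ by direct citation of a rescaling identity for level-one type-A divisors (\cite[Proposition 5.1]{GiansiracusaGibney}), and then proves the first equality by intersecting with the $\op{S}_n$-invariant $\F$-curves $F_i = F_{n-i-2,i,1,1}$, using the $\F$-curve form of Fakhruddin's intersection formula together with \cite[Proposition 4.2]{Fakh}, the two-point fusion rule, and finally an inductive identification $r_\ell(\ell^t,\ell) = r_\ell(1^t,1)$. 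You instead work at the level of Fakhruddin's $\psi$-class/boundary formula for $c_1$: since all three bundles have rank one and each weight ($\ell\omega_1$ at level $\ell$ for $\sL_2$, $\omega_1$ at level $1$ for $\sL_2$, $\omega_k$ at level $1$ for $\sL_{2k}$) is a simple current, every coefficient --- both the $\psi_i$ Casimir term and the boundary term for each $\delta_I$ --- is forced to be $\ell/4$, $1/4$, $k/4$ respectively when $|I|$ is odd, and $0$ when $|I|$ is even. Your arithmetic is correct: $(\omega_1,\omega_1)=1/2$, $h^\vee(\sL_2)=2$, $(\omega_k,\omega_k+2\rho)=k(2k+1)/2$ and $h^\vee(\sL_{2k})=2k$ give exactly the stated values. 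This buys a more unified and self-contained argument (both equalities fall out of the same coefficient comparison, without appeal to the GiansiracusaGibney rescaling result and without the induction $r_\ell(\ell^t,\ell)=r_\ell(1^t,1)$), at the cost of having to verify the simple-current structure and rank-one property in each of the three settings rather than just $\sL_2$. One small wording point: for a fixed $\delta_I$ the attaching weight is uniquely determined by the parity of $|I|$ (and $n$ even forces $|I|,|I^c|$ to have the same parity), so it is not that the sum ``collapses onto the set $\{0,\lambda\}$'' but rather that exactly one of $0$ or $\lambda$ occurs with multiplicity one; the conclusion is unaffected.
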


\begin{proof}
The second assertion is a direct application of \cite[Proposition 5.1]{GiansiracusaGibney}, which says that
$$\mathbb{D} (\sL_{r} , 1, (\omega_{z_1}, \cdots , \omega_{z_n}))=\frac{1}{k}\mathbb{D} (\sL_{rk} , 1, (\omega_{k z_1}, \cdots , \omega_{k z_n})).$$

For the first assertion, let $\mathbb{D} = \mathbb{D} (\sL_2 , \ell , \ell \omega_{1}^{n})$.  It suffices to consider intersection numbers of $\mathbb{D}$ with F-curves of the form $F_{i} = F_{n-i-2,i,1,1}$.
Then
$$ \mathbb{D} \cdot F_{i_1, i_2 , i_3 , i_4} = \sum_{\vec{u} \in P_{\ell}^4} \mathrm{deg}( \mathbb{V}(\sL_{2}, \ell, (u_{1}\omega_{1}, u_{2}\omega_{1}, u_{3}\omega_{1}, u_{4}\omega_{1})) \prod_{k=1}^4 r_{\ell}(\ell^{i_k} ,t),$$
where $P_{\ell} = \{0, 1, \cdots, \ell\}$.
In the case where $i_3 = i_4 = 1$, we may use the two-point fusion rule for $\sL_2$ to obtain:
$$ \mathbb{D} \cdot F_{i} = \sum_{0 \le u_1 , u_2\le \ell} \mathrm{deg} ( \mathbb{V}(\sL_{2}, \ell, (u_1\omega_{1} , u_2\omega_{1} , \ell\omega_{1} , \ell\omega_{1}) ) r_{\ell}( \ell^{n-i-2}, u_1 ) r_{\ell}( \ell^i , u_2 ).$$
By the case $I = \{ n \}$ if $n$ is even and $I = \emptyset$ if $n$ is odd in Lemma \ref{lem:nonzeroCBspace}, we see that $r_{\ell}(\ell^j,t) = 0$ if $0 < t < \ell$.  Hence
\[
\mathbb{D} \cdot F_{i} = \sum_{u_1 , u_2 = 0, \ell} \mathrm{deg} ( \mathbb{V}(\sL_{2}, \ell, (u_1\omega_{1} , u_2\omega_{1} , \ell\omega_{1} , \ell\omega_{1}) ) r_{\ell}( \ell^{n-i-2}, u_1 ) r_{\ell}( \ell^i , u_2 ).
\]
But by \cite[Proposition 4.2]{Fakh}, $\deg (\mathbb{V}(\sL_{2}, \ell,
(0, 0, \ell \omega_{1}, \ell \omega_{1}))) = \deg (\mathbb{V}(\sL_{2}, \ell,
(0, \ell \omega_{1}, \ell \omega_{1}, \ell \omega_{1}))) = 0$ and
$\deg (\mathbb{V}(\sL_{2}, \ell,
(\ell \omega_{1}, \ell \omega_{1}, \ell \omega_{1}, \ell \omega_{1}))) =\ell$.
Thus
\[
\mathbb{D} \cdot F_{i} = \ell r_{\ell}( \ell^{n-i-2}, \ell ) r_{\ell}( \ell^i , \ell ).
\]
It therefore suffices to show that $r_{\ell}(\ell^t,\ell) = r_{\ell}(1^t,1)$, but this follows by induction via the factorization rules and the propagation of vacua.
\end{proof}

For the majority of values of $k$ such that $1 < k < \ell$, the divisor $\mathbb{D} (\sL_2 , \ell , k\omega_{1}^{n})$ appears to give a map to a Hassett space.  To establish our evidence for this, we first start with a lemma.

\begin{lemma}\label{lem:zeroCBspace}
Suppose that $1 < k < \ell$.  Then $r_{\ell}(k^i , t )=0$ if and only if either $ki+t$ is odd or one of the following holds:
\begin{enumerate}
\item  $2k \le \ell$ and $i < \max \{ \frac{t}{k} , 2- \frac{t}{k} \}$;
\item  $2k > \ell$, $i$ is even, and $i < \max \{ \frac{t}{\ell-k} , 2- \frac{t}{\ell-k} \}$;
\item  $2k > \ell$, $i$ is odd, and $i < \max \{ \frac{\ell-t}{\ell-k} , 2- \frac{\ell-t}{\ell-k} \}$.
\end{enumerate}
\end{lemma}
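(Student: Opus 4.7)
My plan is to apply Lemma \ref{lem:nonzeroCBspace} directly to the weight vector $(k, \ldots, k, t)$ with $n = i+1$ entries, so $\Lambda = ki + t$. The ``$\Lambda$ even'' requirement in that lemma yields the ``$ki + t$ odd'' branch of the statement. An arbitrary subset $I \subset \{1, \ldots, i+1\}$ with $(i+1) - |I|$ odd is determined by a pair $(j, \epsilon)$, where $j \in \{0, \ldots, i\}$ counts the $k$-indices in $I$ and $\epsilon \in \{0, 1\}$ records whether the $t$-index lies in $I$; the parity condition $(i+1) - |I|$ odd translates to $j + \epsilon \equiv i \pmod{2}$. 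The nonvanishing inequality in Lemma \ref{lem:nonzeroCBspace} then reads
\[
ki + t - i\ell \le j(2k - \ell) + \epsilon(2t - \ell),
\]
so (assuming $ki + t$ is even) the rank is zero iff some admissible $(j, \epsilon)$ violates this inequality.

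Since the right-hand side is linear in $j$ with slope $2k - \ell$, its minimum over $\{0, \ldots, i\}$ is attained at an endpoint, and only two parity-adjacent $(j, \epsilon)$ pairs near that endpoint need to be tested. I would then carry out the three cases separately. When $2k \le \ell$ take $j = i$: the pairs $(i, 0)$ and $(i-1, 1)$ give respectively the violation conditions $i < t/k$ and $i < 2 - t/k$, producing case (1). When $2k > \ell$ and $i$ is even take $j = 0$: the pairs $(0, 0)$ and $(1, 1)$ give $i < t/(\ell - k)$ and $i < 2 - t/(\ell - k)$, producing case (2). When $2k > \ell$ and $i$ is odd, the minimal admissible pairs are $(0, 1)$ and $(1, 0)$; substitution together with the algebraic identity $2 - (\ell - t)/(\ell - k) = (\ell - 2k + t)/(\ell - k)$ yields the two thresholds of case (3). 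In each regime the two violation conditions combine into the stated ``$i < \max\{x, 2 - x\}$'' form.

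Finally I would verify that no interior value of $j$ produces a stronger condition: shifting $j$ by $\pm 2$ (to preserve parity) changes the right-hand side by $\pm 2(2k - \ell)$, which in each regime moves the bound in the wrong direction, so the two endpoint pairs are exhaustive. The main obstacle is not conceptual but organizational; the six-way split (sign of $2k - \ell$, parity of $i$, value of $\epsilon$) must be tracked so that the two conditions coming from each regime package cleanly into the symmetric max form. The $t \leftrightarrow \ell - t$ duality between cases (2) and (3) serves as a useful sanity check, reflecting the $\sL_2$-Weyl reflection on the single asymmetric weight combined with the parity shift that distinguishes even and odd $i$.
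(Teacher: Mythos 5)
Your proposal is correct and takes essentially the same approach as the paper, which simply remarks that the lemma ``follows from case by case analysis of Lemma~\ref{lem:nonzeroCBspace} above and the remark that follows it.'' Your write-up fills in precisely this case analysis, parameterizing the subsets by $(j,\epsilon)$, minimizing the right-hand side over admissible pairs via linearity in $j$, and packaging the two endpoint violations into the stated $\max\{x,\,2-x\}$ form.
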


\begin{proof}
Each of these follows from case by case analysis of Lemma \ref{lem:nonzeroCBspace} above and the remark that follows it.
\end{proof}

We now consider which symmetric F-curves have trivial intersection with the divisors in question.

\begin{proposition}\label{prop:Fcurvecontraction}
Suppose that $1 < k < \frac{3}{4} \ell$ and let $\mathbb{D} = \mathbb{D} (\sL_2 , \ell , k\omega_{1}^{n})$.  Assume that $n$ is even and $\ell \le \frac{kn}{2} - 1$.  (Recall that this is necessary for the non-triviality of $\mathbb{D}$ by remark \ref{rmk:sl2criticallevel}.)  If $a \leq b \leq c \leq d$, then $\mathbb{D} \cdot F_{a,b,c,,d} = 0$ if and only if $a+b+c \leq \frac{\ell +1}{k}$.
\end{proposition}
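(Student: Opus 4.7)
The plan is to compute $\mathbb{D}\cdot F_{a,b,c,d}$ via Fakhruddin's factorization formula for $F$-curve intersections (as used already in the proof of the preceding Proposition), which in the present symmetric setting specializes to
\[
\mathbb{D}\cdot F_{a,b,c,d} \;=\; \sum_{(u_1,u_2,u_3,u_4)\in\{0,\ldots,\ell\}^4} \deg \mathbb{V}\bigl(\sL_2,\ell,(u_1,u_2,u_3,u_4)\omega_1\bigr)\prod_{j=1}^{4} r_\ell(k^{n_j},u_j),
\]
with $(n_1,n_2,n_3,n_4)=(a,b,c,d)$. Every summand is a product of nonnegative quantities, so vanishing of the intersection number is equivalent to vanishing of every summand, and the problem splits into an ``if'' direction (force every summand to vanish) and an ``only if'' direction (exhibit one nonzero summand).

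For the ``if'' direction, assume $a+b+c\le \tfrac{\ell+1}{k}$. I would apply Lemma \ref{lem:zeroCBspace} to each of the three lightest rank factors. In the regime $2k\le\ell$, part~(1) shows that $r_\ell(k^{n_j},u_j)\ne 0$ forces $u_j\le k\,n_j$ once $n_j\ge 2$, with the low-multiplicity cases $n_j\in\{0,1\}$ pinning $u_j$ exactly. In the regime $\ell/2<k<\tfrac{3}{4}\ell$, parts~(2) and~(3) yield the analogous bound after applying the self-dual flip $u_j\mapsto\ell-u_j$ on odd-$n_j$ components. Summing these bounds over $j=1,2,3$ gives $u_1+u_2+u_3\le k(a+b+c)\le\ell+1$ for any nonzero summand. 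The final step is to show that under this inequality, together with the $\sL_2$ four-point fusion rules and the odd-sum parity, the degree factor on $\ov{M}_{0,4}$ must vanish: either one of $u_1,u_2,u_3$ is forced to $0$ and propagation of vacua kills the degree, or the admissible four-tuples sit on the boundary of the Verlinde polytope where Fakhruddin's explicit degree formula for $\sL_2$ at four points evaluates to zero. The strict bound $k<\tfrac{3}{4}\ell$ is precisely what excludes the borderline configurations from contributing positive degree.

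For the ``only if'' direction, assume $a+b+c > \tfrac{\ell+1}{k}$. I would construct a nonzero summand by choosing each $u_j$ to be the largest value of the form $k\,n_j-2 m_j$ (or its self-dual analog in the second regime) that lies in $\{0,\ldots,\ell\}$ and satisfies the nonvanishing conditions of Lemma \ref{lem:zeroCBspace}; this ensures $\prod r_\ell(k^{n_j},u_j)>0$. The hypothesis pushes $u_1+u_2+u_3$ strictly past $\ell+1$, so $(u_1,u_2,u_3,u_4)$ can be chosen in the interior of the Verlinde polytope at level $\ell$ with strict 4-point fusion inequalities, and at such an interior point Fakhruddin's degree formula is strictly positive. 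The principal obstacle I anticipate is bookkeeping the parity constraint $k\,n_j+u_j\equiv 0\pmod 2$ together with the bound $u_1+u_2+u_3\le\ell+1$: the tight case $u_1+u_2+u_3=\ell+1$ is parity-delicate, and the transition between the $2k\le\ell$ and $\ell<2k<\tfrac{3}{2}\ell$ branches of Lemma \ref{lem:zeroCBspace} requires parallel but not identical arguments, so the cleanest write-up will likely handle the two regimes in separate short subcases after a uniform reduction to the inequality on $u_1+u_2+u_3$.
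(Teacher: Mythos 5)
Your ``only if'' direction is essentially the paper's strategy (run Fakhruddin's factorization formula and exhibit a single nonzero summand by an explicit choice of weights $u_j$ in each of the two regimes $2k\le\ell$ and $2k>\ell$), though you leave the actual choices and the verification that both the rank factors and the four-point degree are nonzero as ``anticipated bookkeeping,'' whereas the paper pins these down concretely (e.g.\ $w_a=\min\{ka,\ell\}$ adjusted for parity, and the inequality $2\ell< w_a+w_b'+w_c+w_d<2\ell+2+2w_a$ feeding into \cite[Prop.~4.2]{Fakh}).

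The ``if'' direction is where you diverge from the paper, and where there is a real gap. The paper dispatches it in one line: by \cite[Prop.~4.7]{Fakh} the morphism given by $\mathbb{D}(\sL_2,\ell,k\omega_1^n)$ factors through the Hassett contraction $\rho_{(k/(\ell+1))^n}\colon \ovop{M}_{0,n}\to\ovop{M}_{0,(\frac{k}{\ell+1})^n}$, and the F-curves contracted by that Hassett morphism are exactly those with $(a+b+c)\frac{k}{\ell+1}\le 1$, i.e.\ $a+b+c\le\frac{\ell+1}{k}$. You instead propose to show that every summand in the factorization sum vanishes, which requires two nontrivial sublemmas you only gesture at: (i) the bound $u_j\le kn_j$ for nonzero rank factors (this is recoverable from Lemma \ref{lem:nonzeroCBspace} by taking $I$ to be the $k$-weighted points, but you should say so), and (ii) the assertion that the four-point degree vanishes whenever $u_1+u_2+u_3\le\ell+1$. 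Point (ii) is the crux and is currently hand-waved; to make it work you need to use parity ($\sum u_i$ even) to push $u_1+u_2+u_3\le\ell+1$ and $u_4\le\ell$ down to $\sum u_i\le 2\ell$, and then invoke a precise statement of Fakhruddin's $\sL_2$ four-point degree formula showing degree zero in this range --- a formula the paper never needs to state because it sidesteps the whole computation via the Hassett factorization. Your approach can likely be completed, but it is strictly harder than the paper's, it depends on a degree formula not recorded in the paper, and as written the ``final step'' is a claim rather than an argument. If you want to keep this route, I would isolate and prove the statement ``$\deg\mathbb{V}(\sL_2,\ell,(u_1\omega_1,\dots,u_4\omega_1))=0$ whenever $\sum u_i\le 2\ell$'' as a separate lemma rather than appealing to ``the boundary of the Verlinde polytope.''
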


\begin{proof}
By \cite[Proposition 4.7]{Fakh}, the map associated to $\mathbb{D}$ factors through the map $\ovop{M}_{0,n} \to \ovop{M}_{0,(\frac{k}{\ell+1})^{n}}$. It follows that, if $a+b+c \leq \frac{\ell +1}{k}$, then $\mathbb{D} \cdot F_{a,b,c,d} = 0$.  It therefore suffices to show the converse.  We assume throughout that $a+b+c > \frac{\ell +1}{k}$.

By \cite[Proposition 2.7]{Fakh}, we have
$$ \mathbb{D} \cdot F_{a,b,c,d} = \sum_{\vec{u} \in P_{\ell}^{4}} \deg ( \mathbb{V}(\sL_{2}, \ell, (u_{1}\omega_{1}, u_{2}\omega_{1}, u_{3}\omega_{1}, u_{4}\omega_{1})) r_{\ell}(k^a , u_1 ) r_{\ell}(k^b , u_2 )r_{\ell}(k^c , u_3 ) r_{\ell}(k^d , u_4 ) .$$
Since each term in the sum above is nonnegative, it suffices to show that a single term is nonzero.

We first consider the case that $2k \leq \ell$.  We set
\begin{displaymath}
w_a = \left\{ \begin{array}{ll}
\min \{ ka, \ell \} & \textrm{if $ka \equiv \ell \pmod{2}$},\\
\min \{ ka, \ell-1 \} & \textrm{if $ka \not\equiv \ell \pmod{2}$}.
\end{array} \right.
\end{displaymath}
Note that by assumption, both $k(a+b+c+d)= kn$ and $k(a+b+c) + \ell$ are strictly greater than $2 \ell + 1$. So it is straightforward to check that $w_{a}+w_{b}+w_{c}+w_{d} > 2 \ell$ and $\ell +1 > w_{d}$.  Note further that $2 \ell + 2+ 2w_{a} > 2w_a + w_c + w_d$ and $2w_{a} \ge 4$. It follows that there is an integer $w_b'$ such that $w_b' \equiv w_b \pmod{2}$, $2 \ell < w_a + w_b' + w_c + w_d < 2 \ell + 2 + 2w_a$. Then $w_{a}+w_{b}'+w_{c}+w_{d} \equiv w_{a}+w_{b}+w_{c}+w_{d} \equiv k(a+b+c+d) \equiv 0 \pmod{2}$. Thus $\deg ( \mathbb{V}(\sL_{2}, \ell, (w_a \omega_{1},w_b' \omega_{1},w_c\omega_{1},w_d\omega_{1}) )) \neq 0$ by \cite[Proposition 4.2]{Fakh}.  It therefore suffices to show that $r_{\ell}(k^a , w_a) \neq 0$.  But in this case, Lemma \ref{lem:zeroCBspace} tells us that $r_{\ell}(k^a , w_a)=0$ only if $a < \max \{\frac{w_{a}}{k}, 2-\frac{w_{a}}{k}\} \le \max \{a, 2\}$, which is possible only if $a = 1$. But then $w_{a} = k$ so $r_{\ell}(k^{a}, w_{a}) = r_{\ell}(k, k) = 1 \ne 0$ by the two point fusion rule. Therefore it is always nonzero.

We next consider the case that $2k > \ell$.  Note that in this case, $\frac{\ell+1}{k} < 3$, so we must show that no F-curves are contracted.  We set
\begin{displaymath}
w_a = \left\{ \begin{array}{ll}
k & \textrm{if $a$ is odd},\\
2( \ell -k) & \textrm{if $a$ is even}.
\end{array} \right.
\end{displaymath}
Again we have that $\ell + 1 > \max \{ w_a , w_b , w_c , w_d \}$, $2 \ell < w_a + w_b + w_c + w_d < 2 \ell + 2 + 2 \min \{ w_a , w_b , w_c , w_d \}$, and $w_{a}+w_{b}+w_{c}+w_{d} \equiv 0 \pmod{2}$. Thus $\deg ( \mathbb{V}(\sL_{2}, \ell, (w_{a}\omega_{1}, w_{b}\omega_{1}, w_{c}\omega_{1}, w_{d}\omega_{1}))) \neq 0$ by \cite[Proposition 4.2]{Fakh}.  If $a$ is odd, we see that $r_{\ell} (k^a , w_a ) = 0$ if and only if $a < 1$.  If $a$ is even, we see that $r_{\ell} (k^a , w_a ) = 0$ only if $a< 2$.  It follows that no F-curves are contracted.
\end{proof}

By Proposition \ref{prop:Fcurvecontraction}, we see that the $\F$-curves that have trivial intersection with $\mathbb{D}(\sL_{2}, \ell, k\omega_{1}^{n})$ are precisely those that are contracted by the morphism $\rho_{(\frac{k}{\ell+1})^n} : \ovop{M}_{0,n} \to \ovop{M}_{0, (\frac{k}{\ell+1})^{n}}$.  At the present time, this is not sufficient to conclude that $\mathbb{D}(\sL_{2}, \ell, k\omega_{1}^{n})$ is in fact the pullback of an ample divisor from this Hassett space, although this would follow from a well-known conjecture (see \cite[Question 1.1]{KeelMcKernanContractible}).


\begin{theorem}\label{cnj:sl2conjecture}
Assume that $n$ is even, $1 < k < \frac{3}{4} \ell$, and $\ell \le \frac{kn}{2} - 1$.  If the $\F$-Conjecture holds (see \cite[Question 1.1]{KeelMcKernanContractible}), then the divisor $\mathbb{D}(\sL_{2}, \ell, k\omega_{1}^{n})$ is the pullback of an ample class via the morphism $\rho_{(\frac{k}{\ell+1})^n} : \ovop{M}_{0,n} \to \ovop{M}_{0, (\frac{k}{\ell+1})^{n}}$.
In particular, if $\frac{\ell+1}{3} < k < \frac{3}{4}\ell$, then $\mathbb{D}$ is ample.
\end{theorem}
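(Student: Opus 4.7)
My plan is to combine Proposition \ref{prop:Fcurvecontraction} with the $\F$-Conjecture of \cite[Question 1.1]{KeelMcKernanContractible}, which asserts that $\F$-curves generate the Mori cone $\overline{NE}(\ovop{M}_{0,n})$. Once combined with Kleiman's criterion, strict positivity of a nef divisor on every $\F$-curve will be enough to conclude ampleness.

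Set $A = (\tfrac{k}{\ell+1})^n$ and write $\mathbb{D} = \mathbb{D}(\sL_2,\ell,k\omega_1^n)$. By \cite[Proposition 4.7]{Fakh}, the morphism determined by the semi-ample divisor $\mathbb{D}$ factors through $\rho_A : \ovop{M}_{0,n} \to \ovop{M}_{0,A}$, so I obtain a nef divisor $\mathbb{D}'$ on $\ovop{M}_{0,A}$ with $\rho_A^*\mathbb{D}' = \mathbb{D}$; to prove the main claim it suffices to show $\mathbb{D}'$ is ample.

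Because $\rho_A$ is a surjective morphism of normal projective varieties, push-forward of $1$-cycles is surjective, so under the $\F$-Conjecture the cone $\overline{NE}(\ovop{M}_{0,A})$ is the polyhedral cone generated by the push-forwards $\rho_{A*} F_{a,b,c,d}$ of $\F$-curves (the contracted ones contributing zero). Proposition \ref{prop:Fcurvecontraction} precisely identifies the $\F$-curves with $\mathbb{D} \cdot F_{a,b,c,d} = 0$, namely those satisfying $a+b+c \leq \tfrac{\ell+1}{k}$, and this is exactly the criterion for $F_{a,b,c,d}$ to be contracted by $\rho_A$. Consequently every non-zero generator of $\overline{NE}(\ovop{M}_{0,A})$ satisfies
\[
    \mathbb{D}' \cdot \rho_{A*} F_{a,b,c,d} = \mathbb{D} \cdot F_{a,b,c,d} > 0,
\]
and Kleiman's criterion gives the ampleness of $\mathbb{D}'$.

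For the ``in particular'' assertion, when $\tfrac{\ell+1}{3} < k < \tfrac{3}{4}\ell$ one has $\tfrac{\ell+1}{k} < 3$, while every $\F$-curve has $a + b + c \geq 3$ since each leg carries at least one marked point. Thus no $\F$-curve can satisfy the vanishing condition of Proposition \ref{prop:Fcurvecontraction}, so the $\F$-Conjecture applied directly on $\ovop{M}_{0,n}$, together with Kleiman's criterion, gives the ampleness of $\mathbb{D}$. There is essentially no computational obstacle beyond Proposition \ref{prop:Fcurvecontraction}; the sole conceptual step is propagating the $\F$-Conjecture from $\ovop{M}_{0,n}$ down to $\ovop{M}_{0,A}$ via $\rho_{A*}$, which I expect to be the main (but mild) point in the write-up, since it relies on surjectivity of $\rho_A$ together with the precise matching between zero intersection and contraction supplied by Proposition \ref{prop:Fcurvecontraction}.
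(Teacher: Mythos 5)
Your proposal is correct and matches the approach implicit in the paper, which does not actually write out a proof of this theorem but instead relies on the paragraph preceding it (``the $\F$-curves that have trivial intersection with $\mathbb{D}(\sL_{2}, \ell, k\omega_{1}^{n})$ are precisely those that are contracted by $\rho_{(\frac{k}{\ell+1})^n}$\ldots this would follow from a well-known conjecture''). You have correctly filled in the details that the paper leaves implicit: the identification of $a+b+c \le \tfrac{\ell+1}{k}$ as the exact condition for an $\F$-curve to be contracted by $\rho_A$ on a symmetric Hassett space, the use of \cite[Proposition 4.7]{Fakh} to write $\mathbb{D}=\rho_A^*\mathbb{D}'$, the transport of the $\F$-Conjecture to $\overline{NE}(\ovop{M}_{0,A})$ via surjectivity of $\rho_{A*}$, and Kleiman's criterion. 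The ``in particular'' clause is also handled correctly, since $\tfrac{\ell+1}{k}<3$ forces every $\F$-curve to have positive intersection with $\mathbb{D}$ and hence no pushforward is needed.
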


We note further that Proposition \ref{prop:Fcurvecontraction} does not cover all of the possible cases of symmetric-weight $\sL_{2}$ conformal block divisors.  In particular, if $k \geq \frac{3}{4} \ell$, then $\mathbb{D}(\sL_{2}, \ell, k\omega_{1}^{n})$ may in fact have trivial intersection with an $\F$-curve if all of the legs contain an even number of marked points.  Such is the case, for example, of the divisor $\mathbb{D}(\sL_{2}, 4, 3\omega_{1}^{8})$.  This divisor has zero intersection with the $\F$-curve $F(2,2,2,2)$ and positive intersection with every other $\F$-curve.  It is not difficult to see that the associated birational model is the Kontsevich-Boggi compactification of $\op{M}_{0,8}$ (see \cite[Section 7.2]{GJM} for details on this moduli space).

\subsection{Birational properties of type A conformal blocks}\label{5b}

We note that in every known case the birational model associated to conformal blocks divisors is in fact a compactification of $\op{M}_{0,n}$.  In other words, the associated morphism restricts to an isomorphism on the interior.  We posit this as a conjecture.


\begin{conjecture}
\label{PreserveInterior}
Let $\mathbb{D}$ be a non-trivial conformal blocks divisor of type A with strictly positive weights. Then $\mathbb{D}$ separates all points on $\op{M}_{0,n}$.
More precisely, for any two distinct points $x_{1}, x_{2} \in \op{M}_{0,n}$, the morphism
	\[
		H^{0}(\ovop{M}_{0,n}, \mathbb{D}) \to
		\mathbb{D}|_{x_{1}}\oplus \mathbb{D}|_{x_{2}}
	\]
is surjective.
\end{conjecture}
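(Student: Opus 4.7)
The plan is to approach Conjecture \ref{PreserveInterior} via the philosophy threaded throughout this paper: type A conformal blocks divisors with strictly positive weights appear to coincide, at least as birational models, with Veronese quotient divisors $\Vqd{D}_{\gamma,A}$ having all $a_i>0$. For such Veronese quotients the morphism $\varphi_{\gamma,A}$ preserves $\op{M}_{0,n}$ by Proposition \ref{prop:MorThroughHassett}, so establishing the correspondence in general would immediately give the conjecture, since the image of $\op{M}_{0,n}$ in $\Udnga{d}{\gamma}{A}$ would be an open subscheme and distinct points of that image are separated by sections of any ample line bundle. My first step would therefore be to try to upgrade the partial results of \S \ref{5a} and Theorem \ref{main} to a general identification of $\mathbb{D}(\mathfrak{g},\ell,\vec{\lambda})$ with the pullback of an ample class from some Veronese quotient whenever $\vec{\lambda}$ has strictly positive entries.

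Absent such a general identification, a direct attack would combine Fakhruddin's factorization rules with the propagation of vacua. The propagation step is the reason the hypothesis of strictly positive weights is natural: if some $\lambda_i=0$ then $\mathbb{D}$ is the pullback along the forgetful map $\pi_i:\ovop{M}_{0,n}\to\ovop{M}_{0,n-1}$ of a conformal blocks divisor with one fewer marked point, so points of $\op{M}_{0,n}$ in a common fiber of $\pi_i$ cannot be separated and the conjecture would be false as stated. Assuming all $\lambda_i\ne 0$, one would proceed by induction on $n$, using factorization to compare $\mathbb{D}$ with its restrictions to each boundary divisor, with the aim of building, for a given pair $x_1\ne x_2\in \op{M}_{0,n}$, a section of $\mathbb{D}$ realizing the separation by combining pull-backs of separating sections from various forgetful-map images and lifts from boundary strata.

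The main obstacle is the last step. Sections of $\mathbb{V}(\mathfrak{g},\ell,\vec{\lambda})$ have explicit descriptions via coinvariants of the affine Lie algebra (and, at level one, via non-abelian theta functions), but the fiber ranks and boundary restrictions controlled by the fusion and factorization formulas do not by themselves produce separating sections on the interior. Surjectivity of the evaluation map $H^0(\ovop{M}_{0,n},\mathbb{D}) \to \mathbb{D}|_{x_1}\oplus \mathbb{D}|_{x_2}$ is an inherently global statement, and one would likely need either a genuine representation-theoretic non-vanishing argument at pairs of distinct smooth pointed rational curves, or a reduction to a suitable form of the $\op{F}$-Conjecture in the spirit of \cite{KeelMcKernanContractible}. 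Providing such an argument uniformly in $\mathfrak{g}$, $\ell$, and $\vec{\lambda}$ is, in my view, the crux of the conjecture, and it is likely inseparable from the broader problem of completing the conjectured dictionary between conformal blocks divisors and Veronese quotients outlined in \S \ref{conjectures}.
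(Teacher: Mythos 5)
This statement appears in the paper as Conjecture~\ref{PreserveInterior}, not as a theorem: the authors offer no proof, only numerical evidence and a discussion of consequences that would follow if the conjecture were true. There is therefore no paper proof to compare against, and your write-up appropriately does not claim to supply one. What you have produced is a survey of plausible lines of attack, culminating in the honest observation that the surjectivity of the evaluation map is a global statement not obviously reachable from factorization and fusion alone; this is consistent with the conjectural status of the statement and with the way the paper itself treats it.

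A few of your remarks are worth affirming or sharpening. Your observation that strictly positive weights are necessary is correct: by propagation of vacua, a weight $\lambda_i=0$ makes $\mathbb{D}$ a pullback along the forgetful morphism $\pi_i\colon \ovop{M}_{0,n}\to\ovop{M}_{0,n-1}$, so pairs of interior points lying in a common fiber of $\pi_i$ could not be separated, and the conjecture would be false as stated. This is precisely why the hypothesis appears. Your first proposed route --- establishing a general identification of type~A conformal blocks divisors with pullbacks of ample classes from Veronese quotients $\Udnga{d}{\gamma}{A}$ whose weights $a_i$ are all positive, and then invoking that $\varphi_{\gamma,A}$ preserves the interior (Proposition~\ref{prop:MorThroughHassett}) --- is exactly the guiding philosophy of the paper, and indeed the authors present Theorem~\ref{main} and the results of \S\ref{5a} as partial progress in this direction. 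However, you should be aware that the paper proves such an identification only for $\sL_2$ with weight data $\omega_1^{2g+2}$ (Theorem~\ref{main}), and the cases $\mathbb{D}(\sL_2,\ell,k\omega_1^n)$ with $1<k<\frac34\ell$ are handled only conditionally on the $\op{F}$-Conjecture (Theorem~\ref{cnj:sl2conjecture}); there is at present no general dictionary, so this route is itself conjectural. Your second route via factorization and induction on $n$ correctly stalls at the step of producing separating sections on the open locus, which is not accessible from boundary restrictions; this matches the paper's use of factorization only to describe what the maps would look like on boundary strata \emph{assuming} the conjecture, not to prove it. In short, your assessment of where the difficulty lies is accurate, and there is no gap to report because you are not asserting a proof of a statement the paper also leaves open.
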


If true, this conjecture would have several interesting consequences.  Among them is the following simple description of the maps associated to conformal blocks divisors.  Let $\mathbb{D}$ be a conformal blocks divisor of type A and $\rho_{\mathbb{D}} : \ovop{M}_{0,n} \to X$ the associated morphism.  Consider a boundary stratum
$$ \prod_{i=1}^m \op{M}_{0,k_i} \hookrightarrow \ovop{M}_{0,n} .$$

By factorization \cite[Proposition 2.4]{Fakh}, the pullback of a type A conformal blocks divisor to $\ovop{M}_{0,k_m}$ and its interior $M_{0,k_{m}}$ is an effective sum of type A conformal blocks divisors.  If all of the divisors in this sum are trivial, then the restriction of $\rho_{\mathbb{D}}$ to this boundary stratum forgets a component of the curve:
\[
\xymatrix{
\prod_{i=1}^m \op{M}_{0,k_i} \ar[r] \ar[d] & \ovop{M}_{0,n} \ar[d]^{\rho_{\mathbb{D}}} \\
\prod_{i=1}^{m-1} \op{M}_{0,k_i} \ar[r] & X.}
\]

If the only non-trivial divisors in this sum have weight zero on some subset of the attaching points, then these divisors are pullbacks of non-trivial conformal blocks divisors via the map that forgets these points.  Hence, the restriction of $\rho_{\mathbb{D}}$ to this boundary stratum forgets these attaching points:
\[
	\xymatrix{
\op{M}_{0,k_i} \ar[r] \ar[d] & \ovop{M}_{0,n} \ar[d]^{\rho_{\mathbb{D}}} \\
\op{M}_{0,k_i -j} \ar[r] & X.}
\]

Finally, if any of the non-trivial conformal blocks divisors in this sum has strictly positive weights, then by Conjecture \ref{PreserveInterior}, the restriction of $\rho_{\mathbb{D}}$ to the interior of this stratum is an isomorphism:
\[
	\xymatrix{
    \op{M}_{0,k_i} \ar[r] \ar[d]^{\cong} & \ovop{M}_{0,n} \ar[d]^{\rho_{\mathbb{D}}} \\
	\op{M}_{0,k_i} \ar[r] & X.}
\]

In summary, Conjecture \ref{PreserveInterior} implies that the image of a boundary stratum $\prod_{i=1}^{m}\op{M}_{0, k_{i}}$ in $X$ is isomorphic to
\[
	\prod_{i=1}^{a}\op{M}_{0, k_{i}} \times \prod_{i=a+1}^{b}\op{M}_{0, k_{i}-j_{i}}
\]
for some $1 \le a \le b \le n$ and $1 \le j_{i} \le k_{i}-3$.

In this way, the morphisms associated to type A conformal blocks are somewhat reminiscent of Smyth's modular compactifications (see \cite{Smyth}).  Each of Smyth's compactifications can be described by assigning, to each boundary stratum, a collection of ``forgotten'' components.  In a similar way, the morphism $\rho_{\mathbb{D}}$ appears to assign to each boundary stratum a collection of forgotten components and forgotten points of attachment.  It follows that, if Conjecture \ref{PreserveInterior} holds, one can understand the morphism $\rho_{\mathbb{D}}$  completely from such combinatorial data.


\begin{bibdiv}
\begin{biblist}

\bib{ags}{article}{
  author={Alexeev, Valery},
  author={Gibney, Angela},
  author={Swinarski, David},
  title={Higher level conformal blocks on $\overline {\operatorname {M}}_{0,n}$ from $\sL _2$},
  year={2010},
  eprint={http://arxiv.org/abs/1011.6659},
  note={to appear in Proc. Edinb. Math. Soc.},
}

\bib{AlexeevSwinarski}{article}{
author={Alexeev, Valery},
author={Swinarski, David},
title={Nef Divisors on $\overline{\operatorname{M}}_{0,n}$ from GIT},
year={2008},
eprint={http://arxiv.org/abs/0812.0778},
}		


\bib{agss}{article}{
  author={Arap, Maxim},
  author={Gibney, Angela},
  author={Stankewicz, Jim},
  author={Swinarski, David},
  title={$\sL _n$ level $1$ Conformal block divisors on $\overline {\operatorname {M}}_{0,n}$},
  journal={Int. Math. Res. Not. IMRN},
  number={7},
  page={1634--1680},
  year={2012},
}

\bib{Boggi}{article}{
   author={Boggi, Marco},
   title={Compactifications of configurations of points on ${\Bbb P}^1$
   and quadratic transformations of projective space},
   journal={Indag. Math. (N.S.)},
   volume={10},
   date={1999},
   number={2},
   pages={191--202},
}

\bib{CHS08}{article}{
 author = {Coskun, Izzet},
 author = {Harris, Joe},
 author = {Starr, Jason},
 title = {The effective cone of the {K}ontsevich moduli space},
 journal = {Canad. Math. Bull.},
 number = {4},
 pages = {519--534},
 volume = {51},
 year = {2008},
}

\bib{Fakh}{article}{
  author={Fakhruddin, Najmuddin},
  title={Chern classes of conformal blocks on $\overline {\operatorname {M}}_{0,n}$},
  journal={Contemp. Math.},
  year={2012},
  volume={564},
  page={145--176},
}

\bib{FedorchukCyclic}{article}{
 author = {Fedorchuk, Maksym}
 title = {Cyclic covering morphisms on $\bar{M}_{0,n}$},
 year = {2011},
 eprint={http://arxiv.org/abs/1105.0655},
}



\bib{Giansiracusa}{article}{
  author={Giansiracusa, Noah},
  title={Conformal blocks and rational normal curves},
  year={2011},
  eprint={http://arxiv.org/abs/1012.4835},
  note={to appear in JAG},
}

\bib{GiansiracusaGibney}{article}{
  author={Giansiracusa, Noah},
  author={Gibney, Angela},
  title={The cone of type A, level 1 conformal block divisors},
  journal={Adv. Math.}
  volume={231},
  page={798--814},
  year={2012},
}

\bib{GJM}{article}{
  author={Giansiracusa, Noah},
  author={Jensen, David},
  author={Moon, Han-Bom},
  title={GIT compactifications of $\op{M}_{0,n}$ and flips},
  year={2011},
  eprint={http://arxiv.org/abs/1112.0232},
}

\bib{GiansiracusaSimpson}{article}{
 author = {Giansiracusa, Noah}
 author = {Simpson, Matthew},
 journal = {Int. Math. Res. Not. IMRN},
 number = {14},
 pages = {3315--3334},
 title = {GIT compactifications of {$\scr M_{0,n}$} from conics},
 year = {2011},
}

\bib{git}{book}{
 author = {Mumford, David}
 author = {Fogarty, John}
 author= {Kirwan, Francis},
 title = {Geometric invariant theory},
Edition = {Third},
Publisher = {Springer-Verlag},
Series = {Ergebnisse der Mathematik und ihrer Grenzgebiete (2)},
Volume = {34},
year = {1994}
}

\bib{HassettWeighted}{article}{
  author={Hassett, Brendan},
  title={Moduli spaces of weighted pointed stable curves},
  journal={Adv. Math.},
  volume={173},
  date={2003},
  number={2},
  pages={316--352},
}

\bib{KapChow}{article}{
   author={Kapranov, M. M.},
   title={Chow quotients of Grassmannians. I},
   conference={
      title={I. M. Gel\cprime fand Seminar},
   },
   book={
      series={Adv. Soviet Math.},
      volume={16},
      publisher={Amer. Math. Soc.},
      place={Providence, RI},
   },
   date={1993},
   pages={29--110},
}

\bib{KapVer}{article}{
   author={Kapranov, M. M.},
   title={Veronese curves and Grothendieck-Knudsen moduli space $\overline
   M_{0,n}$},
   journal={J. Algebraic Geom.},
   volume={2},
   date={1993},
   number={2},
   pages={239--262},
   issn={1056-3911},
}

\bib{KeelAnnals}{article}{
  author={Keel, Se{\'a}n},
  title={Basepoint freeness for nef and big line bundles in positive characteristic},
  journal={Ann. of Math. (2)},
  volume={149},
  date={1999},
  number={1},
  pages={253--286},
}

\bib{KeelMcKernanContractible}{article}{
  author={Keel, Se{\'a}n},
  author={McKernan, James},
  title={Contractible Extremal Rays on $\ov{M} _{0,n}$},
  date={1996},
  eprint={http://arxiv.org/abs/alg-geom/9607009},
}

\bib{LosevManin}{article}{
   author={Losev, A.},
   author={Manin, Y.},
   title={New moduli spaces of pointed curves and pencils of flat
   connections},
   note={Dedicated to William Fulton on the occasion of his 60th birthday},
   journal={Michigan Math. J.},
   volume={48},
   date={2000},
   pages={443--472},
}

\bib{Simpson}{thesis}{
 Author = {Simpson, Matthew},
 Title = {On log canonical models of the moduli space of stable pointed genus zero curves},
 Year = {2008}
 note={Ph.D thesis}
}

\bib{Smyth}{article}{
 author = {Smyth, David},
 title = {Towards a classification of modular compactifications of the moduli space of curves},
 eprint={http://arxiv.org/abs/0902.3690},
 year = {2009},
}

\bib{ConfBlocks}{article}{
		author={Swinarski, David},
		title={\texttt{\upshape ConfBlocks}: a Macaulay 2 package for computing conformal blocks divisors},
		date={2010},
		note={Version 1.0},
	eprint={http://www.math.uiuc.edu/Macaulay2},
	}

\bib{TUY}{article}{
  author={Tsuchiya, Akihiro},
  author={Ueno, Kenji},
  author={Yamada, Yasuhiko},
  title={Conformal field theory on universal family of stable curves with gauge symmetries},
  conference={ title={Integrable systems in quantum field theory and statistical mechanics}, },
  book={ series={Adv. Stud. Pure Math.}, volume={19}, publisher={Academic Press}, place={Boston, MA}, },
  date={1989},
  pages={459--566},
}

\bib{Ueno}{book}{
  author={Ueno, Kenji},
  title={Conformal field theory with gauge symmetry},
  series={Fields Institute Monographs},
  volume={24},
  publisher={American Mathematical Society},
  place={Providence, RI},
  date={2008},
}

\end{biblist}
\end{bibdiv}

%
%

\end{document}